\tikzset{->-/.style={decoration={  markings,  mark=at position #1 with
    {\arrow{>}}},postaction={decorate}}}
\tikzset{-<-/.style={decoration={  markings,  mark=at position #1 with
    {\arrow{<}}},postaction={decorate}}}
\def\XX{\mathbb{X}}
\def\Zq{R}
\def\xx{\mathbb{X}}
\def\OStab{\QStab^{\oplus}}
\def\CStab{\QStab^{*}}
\def\QStab{\operatorname{QStab}}
\newcommand{\Note}[1]{\textcolor{black}{\texttt{#1}}}
\theoremstyle{plain}
\newtheorem{theorem}{Theorem}[section]
\newtheorem*{thma}{Theorem~A}
\newtheorem*{thmb}{Theorem~B}
\newtheorem{lemma}[theorem]{Lemma}
\newtheorem{corollary}[theorem]{Corollary}
\newtheorem{proposition}[theorem]{Proposition}
\newtheorem{conjecture}[theorem]{Conjecture}
\theoremstyle{definition}
\newtheorem{definition}[theorem]{Definition}
\newtheorem{example}[theorem]{Example}
\newtheorem{remark}[theorem]{Remark}
\numberwithin{equation}{section}
\newtheorem{assumption}[theorem]{Assumption}
\newtheorem{construction}[theorem]{Construction}
\newtheorem*{con}{Convention}
\def\hua{\mathcal}
\def\hh{\mathcal}
\def\ha{\hh{A}}
\def\kong{\mathbb}
\def\<{\langle}
\def\>{\rangle}
\def\ZZ{\mathbb{Z}}
\def\R{\mathbb{R}}
\def\RR{\R}
\def\Aut{\operatorname{Aut}}
\def\Sim{\operatorname{Sim}}
\def\Hom{\operatorname{Hom}}
\def\hom{{\hh{H}}om}
\def\Ext{\operatorname{Ext}}
\def\Stab{\operatorname{Stab}}
\def\Stap{\operatorname{Stab}^\circ}
\def\diff{\operatorname{d}}
\def\Br{\operatorname{Br}}
\def\rank{\operatorname{rank}}
\def\deg{\operatorname{deg}}
\newcommand{\h}{\hh{H}}            
\newcommand{\ns}{\widehat{\sigma}}
\newcommand{\nz}{\widehat{Z}}
\newcommand{\np}{\widehat{\hh{P}}}
\renewcommand{\k}{\mathbf{k}}
\renewcommand{\Re}{\operatorname{Re}}
\newcommand{\shift}[1]{\operatorname{\Sigma}_{#1}}
\newcommand{\D}{\operatorname{\hh{D}}}
\newcommand{\C}{\operatorname{\hh{C}}}
\newcommand{\per}{\operatorname{per}}
\def\QQX{\qq{\XX}}
\def\surf{\mathbf{S}}                       
\newcommand{\ST}{\operatorname{ST}}        
\def\coh{\operatorname{Coh}}
\def\Fuk{\operatorname{Fuk}}
\def\RHom{\operatorname{RHom}}
\def\add{\operatorname{add}}
\newcommand{\Quad}{\operatorname{Quad}}
\newcommand{\TFuk}{\operatorname{TFuk}}
\def\filt{\operatorname{filt}}
\def\Grot{\operatorname{\mathrm{K}}}
\def\gldim{\operatorname{gldim}}
\def\sadd{\sigma_{\oplus}}
\def\sext{\sigma_{*}}
\def\padd{\hh{P}_{\oplus}}
\def\pext{\hh{P}_{*}}
\def\Coh{\mathrm{Coh}}
\DeclareMathOperator{\Per}{per}
\def\bi{\mathbf{i}}
\def\lto{\longrightarrow}
\def\iso{\xrightarrow{\sim}}
\def\sli{\mathcal{P}}
\newcommand{\norm}[1]{\lVert #1 \rVert}
\def\CC{\mathbb{C}}
\def\hs{\mathfrak{h}}
\def\reg{\mathrm{reg}}
\def\alp{\alpha}
\def\bK{\mathbf{k}}
\def\TK{\mathrm{T} \mathcal{K}_X^{-1}}
\def\OO{\mathcal{O}}
\def\K{\mathcal{K}}
\def\E{\mathcal{E}}
\def\reg{\mathrm{reg}}
\def\Im{\operatorname{Im}}
\def\GL{\operatorname{GL}}
\def\DQ{\D_\infty(Q)}
\def\DXQ{\D_\XX(Q)}
\def\DNQ{\D_N(Q)}
\newcommand{\qq}[1]{\operatorname{\Gamma}_{#1}Q}
\newcommand{\twi}{\Psi} 
\def\bb{\psi}
\def\ss{{\mathrm{ss}}}
\newcommand{\sslash}{\mathbin{/\mkern-6mu/}}
\title[Q\MakeLowercase{Stab on} CY-$\XX$]
{$q$-Stability conditions on Calabi-Yau-$\XX$ categories}
\author{Akishi Ikeda}
\address{AI:
Department of Mathematics,
    Josai University,
    Saitama, Japan}
\email{akishi@josai.ac.jp}
\author{Yu Qiu}
\address{Qy:
	Yau Mathematical Sciences Center and Department of Mathematical Sciences,
	Tsinghua University,
    100084 Beijing,
    China.
    \&
    Beijing Institute of Mathematical Sciences and Applications, Yanqi Lake, Beijing, China}
\email{yu.qiu@bath.edu}
\subjclass[2020]{18E30,32Q26,14F08}
\begin{document}

\begin{abstract}
    We introduce $q$-stability conditions
    $(\sigma,s)$ on Calabi-Yau-$\mathbb{X}$ categories $\mathcal{D}_\mathbb{X}$,
    where $\sigma$ is a stability condition on $\mathcal{D}_\mathbb{X}$ and $s$ a complex number.
    We prove the corresponding deformation theorem, that
    $\operatorname{QStab}_s\mathcal{D}_\mathbb{X}$ is a complex manifold of dimension $n$ for fixed $s$,
    where $n$ is the rank of the Grothendieck group of $\mathcal{D}_\mathbb{X}$ over $\mathbb{Z}[q^{\pm 1}]$.
    When $s=N$ is an integer,
    we show that $q$-stability conditions can be identified with the stability conditions on $\mathcal{D}_N$,
    provided the orbit category $\mathcal{D}_N=\mathcal{D}_\mathbb{X}/[\mathbb{X}-N]$ is well defined.
    To attack the questions on existence and deformation along the $s$ direction,
    we introduce the inducing method.
    Sufficient and necessary conditions are given, for
    a stability condition on an $\mathbb{X}$-baric heart (that is, a usual triangulated category) of $\mathcal{D}_\mathbb{X}$
    to induce $q$-stability conditions on $\mathcal{D}_\mathbb{X}$.
    As a consequence, we show that the space $\operatorname{QStab}^\oplus\mathcal{D}_\mathbb{X}$ of
    (induced) open $q$-stability conditions is a complex manifold of dimension $n+1$.

    Our motivating examples for $\mathcal{D}_\mathbb{X}$
    are coming from (Keller's) Calabi-Yau-$\mathbb{X}$ completions of dg algebras.
    In the case of smooth projective varieties,
    the $\mathbb{C}^*$-equivariant coherent sheaves on canonical bundles provide the Calabi-Yau-$\mathbb{X}$ categories.
    Another application is that
    we show perfect derived categories can be realized as cluster-$\mathbb{X}$ categories for acyclic quivers.
    \vskip .3cm
    {\parindent =0pt
    \it Key words:}
    $q$-deformation, stability conditions, Calabi-Yau-$\XX$ categories, cluster-$\XX$ categories

\end{abstract}
\maketitle
\tableofcontents\addtocontents{toc}{\setcounter{tocdepth}{1}}

\setlength\parindent{0pt}
\setlength{\parskip}{5pt}

\section{Introduction}
\subsection{$q$-Deformation of stability conditions}
The notion of a stability condition $\sigma=(Z,\sli)$
on a triangulated category $\D$ was introduced by Bridgeland,
motivated from Douglas' work of $\Pi$-stability of D-branes in string theory.
The data consists of
\begin{itemize}
\item a central charge $Z\colon K(\D)\to\CC$, where $K(\D)$ is the Grothendieck group, and
\item a slicing $\sli=\{\sli(\phi)\}$, where $\sli(\phi)$ is an additive/abelian subcategory of $\D$,
which is an $\RR$-refinement of t-structures.
\end{itemize}
A key result established by Bridgeland \cite{B1} is that all stability conditions form a complex manifold $\Stab\D$,
where the local coordinate is given by the central charge
\[
    Z\in\Hom_{\ZZ}(K(\D),\CC).
\]
In most of the cases from algebras, $K(\D)\cong\ZZ^n$ and $\dim_\CC\Stab\D=n$.
Due to works of many people,
the theory of stability conditions has been related/applied to various subjects in mathematics,
in particular, mirror symmetry, cluster algebras, moduli spaces and Donaldson-Thomas theory
(e.g. \cite{B2,B3,BQS,BM,BS,GMN,HKK,I,KQ1,KoSo,QQ,QW,To}).

In this paper, we introduce $q$-stability conditions $(\sigma,s)$ on a class of triangulated categories $\D_\XX$,
consisting of a stability condition $\sigma$ and a complex parameter $s\in\CC$.
We consider it as a $q$-deformation of a Bridgeland stability condition.
Such a category $\D_\XX$ admits a distinguished auto-equivalence $\XX$ (another shift) satisfying
$$
    K(\D_\XX)\cong \Zq^n,\quad \Zq=\ZZ[q,q^{-1}].
$$
Here the $\Zq$-module structure on $K(\D_\XX)$ is given by the action $q[M]=[M[\XX]]$.
The compatible/extra condition on $(\sigma,s)$ is
\[
    \XX ( \sigma)=s \cdot \sigma,
\]
where the left hand side is the $\XX$ action and the right hand side is the $\CC$-action.
To spell this out,
\begin{itemize}
\item the central charge is $\Zq$-linear, i.e.
\[
    Z \in \Hom_\Zq(K(\D_{\XX}),\CC_s),
\]
where $\CC_s$ is still the complex plane but with the $\Zq$-module structure through
the action $q(z)=e^{\bi \pi s}\cdot z$.
\item the slicing is compatible with $Z$ under the action of $\XX$, i.e.
$$\sli(\phi + \Re( s)) = \sli(\phi)[\XX].$$
\end{itemize}
This type of equation was considered by Toda \cite{To} to calculate Gepner/orbit point
in $\CC\backslash\Stab\D/\Aut$.
When fixing $s$, the space $\QStab_s\D$ of $q$-stability conditions forms a complex manifold of dimension $n$ that behaves as the usual spaces of stability conditions with finite dimension. In particular, when $s=N$ is an integer and the orbit category $\D_N\colon=\D_\XX/[\XX-N]$ is well-defined, $\QStab_s\D$ can be embedded in $\Stab\D_N$ indeed.


The main difficulty of this paper lies in finding the proper/correct definition
(such as $q$-stability conditions and global dimension function) so that
one can prove the corresponding results with interesting motivations/applications
(to the usual Calabi-Yau-$N$ case and to $q$-deformation of quadratic differentials).
The following two subsections are devoted to explain these motivation/application,
cf. various further developments in \cite{Q3,IQ2,FLLQ,IQZ,Q20,IOST}.

\subsection{Frobenius structures}
Our original motivation was to understand the link between
the space of stability conditions, for a Dynkin type $Q$,
and the Frobenius structure (which was called the flat structure in \cite{Sa1})
on the unfolding space of the corresponding singularity
constructed by Saito \cite{Sa1,SaTa}.
This link was conjectured by Takahashi, cf. comments in \cite{B3}.

Let $\hs$ be the Cartan subalgebra of the finite dimensional complex simple Lie algebra
$\mathfrak{g}$ corresponding to (an ADE quiver) $Q$ and $\hs_{\reg}$ its regular part.
Then for the `Calabi-Yau-$\infty$` category $\D_\infty(Q):=D^b(\k Q)$, we expect
\begin{gather}\label{eq:00}
    \Stab\D_\infty(Q)\cong\hs/W
\end{gather}
for the Frobenius manifold $\hs/W$, where $W$ is the Weyl group.
This has been proved for type $A$ in \cite{HKK} (cf. \cite{BQS} for $A_2$ case).
On the other hand, for the Calabi-Yau-$N$ category $\D_N(Q):=\D_{fd}(\qq{N})$,
we expect
\begin{gather}\label{eq:0}
    \Stab\D_N(Q)/\ST_N(Q)\cong\hs_{\reg}/W,
\end{gather}
where $\ST_N(Q)$ is the spherical twist group that can be identified with
the Artin/braid group $\Br_Q$ \cite{QW}.
This has been proved for type $A$  in \cite{I} (cf. \cite{BQS} for $A_2$ case).
In the correspondence \eqref{eq:0}, the central charges $Z=Z(N)$ correspond to
the twisted period maps $P_\nu$ with the parameter $\nu=(N-2)/2$ \cite[(5.11)]{Dub1}.
The relation between the twisted period maps with the parameter $\nu$ and
the central charges of Calabi-Yau-$N$ categories was first conjectured in \cite{B4}
for the canonical bundle of the projective space.
Notice that $N$ requires to be an integer $(\ge2)$ in the previous settings for $\Stab\D_N(Q)$
where the category $\D_N(Q)$ is well-defined.
However, for the almost Frobenius structure on $\hs_{\reg}/W$,
the twisted period maps can be defined for any complex parameter $\nu$.
This motivates us to produce a corresponding space of stability conditions for a complex parameter $s$
with the formula $\nu=(s-2)/2$.
Our construction of the $s$-fiber of the space $\QStab_s\DXQ$ of $q$-stability conditions
on $\DXQ:=\D_{fd}(\qq{\XX})$ provides one solution to this question.
Here $\qq{\XX}$ is Calabi-Yau-$\XX$ Ginzburg differential (double) graded algebra,
using Keller's construction \cite{Kel1}.
Moreover, the distinguished auto-equivalence $\XX$ in
the Calabi-Yau-$\XX$ categories is given by the (extra) grading shift.

Finally, we remark that the twisted period map of the almost Frobenius structure on
$\hs_{\reg}/W$ can be identified with the period map associated with the primitive form of
the corresponding ADE singularity in the theory of Saito \cite{Sa1}.
The complex parameter $s \in \CC$ he introduced precisely corresponds to our $s$ for $q$-stability conditions.
Thus our construction is a first step of categorification of
his period map with the parameter $s$.

\subsection{Mirror symmetry}
The main method used to prove \eqref{eq:00} and \eqref{eq:0}
is to realize stability conditions as quadratic differentials.
Such an observation was first made by Kontsevich and Seidel years ago
and two crucial works have been carried out in \cite{BS,HKK}.
The idea is related to mirror symmetry.
More precisely, in Kontsevich's homological mirror symmetry \cite{K},
there is a conjectural derived equivalence (in general)
\[
    \D^b\Fuk(X) \cong \D^b(\coh X^{\vee}),
\]
where $X$ is a Calabi-Yau manifold with derived Fukaya category $\D^b\Fuk(X)$ on the left hand side (A-side)
and $X^{\vee}$ is mirror partner of $X$ with bounded derived category $\D^b(\coh X^{\vee})$ on the right hand side (B-side).
In the mathematical aspects of string theory,
one expects that the complex moduli space $\hua{M}_{\mathrm{cpx}}(X)$ for $X$ can be embedded into
(a quotient of) the space $\Stab \D^b(\coh{X^\vee})$ of stability conditions.
The key correspondence in this conjectural relation is
the central charge of a stable object $S^{\vee}$ in $\D^b(\coh{X^\vee})$,
is given by the integral of the complex structure via the corresponding Lagrangian $S$ in $X$:
\begin{gather}
    Z(S^{\vee})=\int_{ S } \Omega,
\end{gather}
where $\Omega$ is a canonical holomorphic volume form of the Calabi-Yau manifold $X$.
(For details of this expectation, we refer to \cite[\S~5]{ABCDGKMSSW} and \cite{B5}.)

While the general case is hard, the surface case is attackable:
the complex structure simplifies to quadratic differentials (on Riemann surfaces).
In the Calabi-Yau-$3$ case, Bridgeland-Smith \cite{BS} prove that
\begin{gather}\label{eq:BS}
    \Stap\D_3(\surf)/\Aut\cong\Quad_3(\surf),
\end{gather}
where $\surf$ is a marked surface, $\D_3(\surf)$ the associated Calabi-Yau-$3$ category
and $\Quad_3(\surf)$ the moduli space of quadratic differentials on $\surf$
(with predescribed singularities of GMN type).
Here $\D_3(\surf)$ is a subcategory of some derived Fukaya category \cite{S}.
In the non-Calabi-Yau case, Haiden-Katzarkov-Kontsevich \cite{HKK} prove that
\begin{gather}
    \Stap\D_\infty(\surf)/\Aut\cong\Quad_\infty(\surf),
\end{gather}
where $\surf$ is a flat surface, $\D_\infty(\surf)=\TFuk(\surf)$ the associated
topological Fukaya category
and $\Quad_\infty(\surf)$ the moduli space of quadratic differentials on $\surf$
(with predescribed singularities of exponential type).

One key observation is that $\D_\infty(\surf)$ should be thought as Calabi-Yau-$\infty$,
which is a philosophy developed in \cite{KQ,Q2,BQS}.
When $\surf$ is a disk, we have
\begin{gather*}
    \D_\infty(\surf)=\D_\infty(A_n),\quad
    \D_3(\surf)=\D_3(A_n).
\end{gather*}
We will focus on the surface case, relating works \cite{BS,HKK} in the sequel \cite{IQ2}
by introducing the Calabi-Yau-$\XX$ categories of quivers with superpotential.

Note that the prototype of our Calabi-Yau-$\XX$ categories is the one considered in \cite{KhS}
(cf. its mirror counterpart in \cite{ST}).
Presumedly, the construction in Section~\ref{sec:cool} (cf. \cite[S~4]{IQ2}) on the B-side
and the one in \cite{Se} for the A-side should give
a Calabi-Yau-$\XX$ version of homological mirror symmetry of \cite[Thm.A and Thm.B]{LP}.

\subsection{Application to cluster theory}
There is a close link between stability conditions and cluster theory, cf. \cite{KoSo}.
For instance, cluster theory (in particular cluster exchange graph)
plays a key role in the proof of \eqref{eq:BS} in \cite{BS},
as well as in proving simply connectedness/contractibility of spaces of stability conditions in \cite{Q2,QW,KQ1}.
More precisely,
the cell structure of spaces of stability conditions is encoded by Happel-Reiten-Smal{\o} tilting,
which corresponds to mutation of silting objects in the perfect derived categories (cf. \cite{K3}).
On the other hand, in the categorification of cluster algebras,
the Calabi-Yau-2 cluster categories $\C_2(Q)$
are introduced by Buan-Marsh-Reineke-Reiten-Todorov \cite{BMRRT,K2}.
It is generalized by Amiot-Guo-Keller \cite{A,Kel1,Guo} as Verdier quotient
\begin{gather}\label{eq:C1}
    \C_{N-1}(Q)=\per\qq{N}/\D_{fd}(\qq{N}), \quad N\geq2(\in\ZZ)
\end{gather}
of the Calabi-Yau-$N$ Ginzburg dga $\qq{N}$.
The mutation of cluster algebras corresponds to the mutation of cluster tilting objects
in the cluster categories,
which is also closely related to mutation of silting objects (cf. e.g. \cite{KQ}).
One expects the perfect derived categories $\DQ\cong\per\k Q$ (of an acyclic quiver $Q$)
would be the Calabi-Yau-$\infty$ cluster categories $\C_{\infty}(Q)$ as
\begin{gather}\label{eq:C2}
    \DQ `\approx~' \lim_{m\to\infty}\C_m(Q),
\end{gather}
in the sense that the fundamental domains in $\DQ$ for the orbit quotient $\DQ\to\C_m(Q)$
can be chosen with inclusion relation and the limit of which is $\DQ$.
The corresponding statement for the spaces of stability conditions is (\cite[Thm.~6.2]{Q2})
\[\Stab\DQ \cong \lim_{N\to\infty} \Stab\D_N(Q)/\Br_Q.\]

As an application of our Calabi-Yau-$\XX$ construction, we show that
in fact, $\DQ$ can be realized as cluster-$\XX$ category, i.e.
as Verdier quotient $\per\qq{\XX}/\D(\qq{\XX})$ of $\qq{\XX}$, by relating
\eqref{eq:C1} and \eqref{eq:C2} by completing a commutative diagram \eqref{eq:C12} below
of short exact sequences of triangulated categories.
This unifies the theory of cluster tilting and silting,
which provides a new perspective to study these categories.

\subsection{Content}
In Section~\ref{sec:X},
we construct Calabi-Yau-$\XX$ categories from quivers and from coherent sheaves as our motivating examples.
In Section~\ref{sec:QS} to Section~\ref{sec:ind}, we introduce $q$-stability conditions (Definition~\ref{def:Xstab})
and we prove the following theorem
(cf. Theorem~\ref{thm:localiso2}, Theorem~\ref{thm:reduction} and Theorem~\ref{thm:manifold}):
\begin{thma}
Let $\D_\XX$ be a category satisfies Assumption~\ref{assumption:R},
$n$ the rank of $K\D_\XX$ over $R=\ZZ[q^{\pm1}]$ and $s$ a complex number.
We have the following.
\begin{itemize}
  \item The space $\QStab_s\D_\XX$ of $q$-stability conditions is a complex manifold of dimension $n$
  with local coordinate
  \begin{gather*}
    \begin{array}{ccc}
    \mathcal{Z}_s \colon \QStab_s\D_{\XX} &\longrightarrow& \Hom_\Zq(K(\D_{\XX}),\CC_s),\\
    \quad((Z,\hh{P}),s)& \mapsto& Z.
    \end{array}
  \end{gather*}
  \item If the orbit category $\D_N\colon=\D_{\XX} \sslash [\XX-N]$ is well-defined (i.e. $N$-reductive),
  then there is a canonical injection of complex manifolds
    $$\iota_N \colon \QStab_N(\D_{\XX}) \to \Stab\D_N,$$ whose image is open and closed.
  \item If $\D_\XX$ is Calabi-Yau-$\XX$, then
  the space $\OStab\D_\XX$ of induced open $q$-stability conditions is a complex manifold of dimension $n+1$.
\end{itemize}
\end{thma}
In Section~\ref{sec:cluster}, we show that the perfect derived category of an acyclic quiver
can be realized as a cluster-$\XX$ category that fits into the following story (Corollary~\ref{cor:SES}).
\begin{thmb}
Let $Q$ be an acyclic quiver and $N\ge2$ an integer.
Then there is a commutative diagram:
\begin{equation}\label{eq:C12}
\xymatrix{
    0 \ar[r] &
        \DXQ \ar[r]\ar[d]^{_{\sslash[\XX-N]}} &
        \per\qq{\XX} \ar[r]\ar[d]^{_{\sslash[\XX-N]}} &
        \per\k Q \ar[r]\ar[d]^{_{/\tau[2-N]}} &0 \\
    0 \ar[r] & \DNQ \ar[r]& \per\qq{N} \ar[r] & \C_{N-1}(Q) \ar[r] &0
}.\end{equation}
\end{thmb}
In Appendix~\ref{sec:cat}, we discuss the categorification of $q$-deformed root lattices.
\subsection*{Acknowledgments}
We would like to thank Tom Bridgeland, Alastair King, Bernhard Keller, Tatsuki Kuwagaki, Kyoji Saito,
Yukinobu Toda, Dong Yang and Yu Zhou for inspirational discussions and advices.
This work is is supported by
National Key R\&D Program of China (No. 2020YFA0713000),
Hong Kong RGC 14300817 (from Chinese University of Hong Kong),
Beijing Natural Science Foundation (Z180003)
World Premier International Research Center Initiative (WPI initiative), MEXT, Japan and
JSPS KAKENHI Grant Number JP16K17588.

\section{Calabi-Yau-$\XX$ categories}\label{sec:X}
All the modules and categories will be over $\k$, an algebraically closed field.
\subsection{Differential double graded algebras}
The aim of this section is to introduce differential double graded algebras
which are differential graded algebras graded by $\ZZ \times \ZZ$.
For convenience, we identify $\ZZ \times \ZZ$ with
the rank two free module $\ZZ \oplus \ZZ \XX$
spanned by the basis $1$ and $\XX$.
Thus a pair of integers
$(m,l) \in \ZZ \times \ZZ$ is identified with $m+l \XX \in \ZZ \oplus \ZZ \XX$.

A {\it differential double graded (ddg) algebra} $A$ is a graded algebra
\[
A :=\bigoplus_{m,l \in \ZZ}A^{m+l\XX}
\]
graded by $\ZZ \oplus \ZZ \XX$
with the differential $\diff \colon A^{m+l\XX} \to A^{m+1+l\XX}$ of degree $1$.
We can also define ddg-modules over $A$
similar to usual dg-modules.
For ddg-module $M=\oplus_{m,l}M^{m+l \XX}$ with the
differential $\diff$, the degree shift
$M^{\prime}=M[k+j\XX]$ is defined by
\[
    (M^\prime)^{m+l\XX}:=M^{m+k+(l+j)\XX}
\]
with the new differential $\diff^{\prime}:=(-1)^k \diff$.

Let $\D(A)$ be the derived category of ddg-modules over $A$.
The {\it perfect derived category} $\Per A \subset \D(A)$
is the smallest full triangulated subcategory containing
$A$ and closed under taking direct summands.

\subsection{Calabi-Yau algebras and Calabi-Yau categories}
In this section, we recall the definition of Calabi-Yau algebras.
As in the previous section, let $A$ be a ddg-algebra.
The {\it enveloping algebra of $A$} is defined by $A^e:=A \otimes A^{op}$.
The ddg-algebra $A$ has the $A^e$-module structure given by the two-sided action of $A$.

A ddg-algebra $A$ is called {\it homologically smooth} if
\[
A \in \Per A^e.
\]

Let $A$ be a homologically smooth ddg-algebra.
The {\it inverse dualizing complex} $\Theta_A$ is defined by the cofibrant replacement of
\[
    \RHom_{A^e}(A,A^e)
\]
considered as an object of $\D(A^e)$,

We treat $\Theta_A$ as an object in $\Per A^e$.
Denote by $\D_{fd}(A)$ the full subcategory of $\D(A)$ consisting of ddg-modules
with finite dimensional total cohomology, i.e.
\[
\D_{fd}(A):=\{\,M \in \D(A)\,\vert\, \sum_k \dim H^k(M)<\infty \,\}.
\]
By using $\Theta_A$, the Serre duality is described as follows.
\begin{lemma}[\cite{Kel1}, Lemma 3.4]
\label{lem:Serre}
Let $A$ be a homologically smooth ddg-algebra and $\Theta_A$ be its
inverse dualizing complex.
Then, for  any ddg-modules $M,N \in \D_{fd}(A)$,
there is a canonical isomorphism
\[
\Hom_{\D(A)}(M \otimes_A \Theta_A, N) \iso \mathrm{D}\Hom_{\D(A)}(N,M).
\]
\end{lemma}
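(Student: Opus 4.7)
The plan is to construct the desired natural isomorphism by stacking standard adjunctions around the definition $\Theta_A = \RHom_{A^e}(A, A^e)$, using smoothness $A \in \Per A^e$ at the crucial step and finite-dimensional $\k$-linear duality to identify the final term. First I would pick a cofibrant $A^e$-resolution $P \to A$; because $A$ is homologically smooth, $P$ can be taken perfect over $A^e$, and $\Theta_A$ is then represented on the nose by the bimodule $P^\vee := \Hom_{A^e}(P, A^e)$. This lets me compute $M \otimes_A \Theta_A$ as the honest tensor $M \otimes_A P^\vee$ and avoid any further derived replacement in what follows.

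Next, the natural map from $\Hom_A(M \otimes_A P^\vee, N)$ to $D\Hom_A(N, M)$ is to be assembled as a composition of three canonical steps. Step (i) uses tensor-Hom adjunction over $A$ to rewrite the left-hand side as $\Hom_{A^e}(P^\vee, X)$, where $X$ is an $A$-bimodule built functorially from $M$ and $N$ (essentially $\Hom_\k(M, N)$ with the evident bimodule actions). Step (ii) invokes the double-dual quasi-isomorphism $P \iso (P^\vee)^\vee$, which holds precisely because $P$ is perfect over $A^e$; this converts the expression into $P \otimes^L_{A^e} X$, and the resolution $P \to A$ then rewrites it as $A \otimes^L_{A^e} X$. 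Step (iii) uses that $M$ and $N$ have finite-dimensional total cohomology to rearrange this Hochschild-type tensor product into the $\k$-dual of $\Hom_A(N, M)$, yielding $D\Hom_A(N, M)$. Each step is canonical, so the composite is natural in both $M$ and $N$.

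Finally, one checks the composite is a quasi-isomorphism by devissage. Both sides are exact triangulated functors in each variable, so it suffices to verify the statement when $M$ and $N$ are simple finite-dimensional $A$-modules, where it reduces to a linear-algebra identity. The main obstacle is step (ii): without the smoothness hypothesis $A \in \Per A^e$, the double-dual comparison fails and the whole chain collapses. A secondary bookkeeping point is that in the ddg-setting all tensor and Hom functors are bigraded over $\ZZ \oplus \ZZ\XX$, so both the cohomological shift and the $\XX$-shift must be tracked; but since $\D(A)$, $\Per A^e$, and $\Theta_A$ have been defined with this bigrading in mind, the argument extends verbatim from the singly graded case.
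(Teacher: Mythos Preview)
The paper does not supply its own proof here: the lemma is quoted directly from Keller \cite[Lemma~3.4]{Kel1} and no argument is given in the present paper, so there is nothing to compare your proposal against on the paper's side.

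Your outline is the standard argument and is essentially correct. One place to tighten is step~(iii): the passage from $A \otimes^L_{A^e} \RHom_\k(M,N)$ to $\mathrm{D}\RHom_A(N,M)$ is not a single linear-algebra identity but rather the composite of the identification $\RHom_A(N,M) \cong \RHom_{A^e}(A,\RHom_\k(N,M))$, the duality $\mathrm{D}\RHom_{A^e}(A,Y) \cong A \otimes^L_{A^e} \mathrm{D}Y$ (which again uses that $A$ is perfect over $A^e$), and the isomorphism $\mathrm{D}\RHom_\k(N,M) \cong \RHom_\k(M,N)$ (which is where $M,N \in \D_{fd}(A)$ is actually used). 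Writing this out makes it transparent which hypothesis enters at which point. Also, the d\'evissage to simple modules at the end is superfluous: once each step in your chain is established, it is already a quasi-isomorphism for all $M,N \in \D_{fd}(A)$, so no reduction to generators is needed.
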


The following Calabi-Yau property of a ddg-algebra is due to
Ginzburg and Kontsevich \cite[Def.~3.2.3]{G}.
\begin{definition}
\label{defi:CY}
Let $\mathcal{N} \in \ZZ\oplus \ZZ\XX$.
A ddg-algebra $A$ is called
a {\it Calabi-Yau-$\mathcal{N}$}  algebra if $A$ is homologically smooth and
\[
\Theta_A \iso A[-\mathcal{N}]
\]
in $\Per A$.

A triangulated category $\D$ is called \emph{Calabi-Yau-$\hh{N}$} (CY-$\hh{N}$)
if, for any objects $X,Y$ in $\hh{D}$ we have a natural isomorphism
\begin{gather}\label{eq:serre}
    \mathfrak{S}:\Hom (X,Y)
        \xrightarrow{\sim} \mathrm{D}\Hom (Y,X[\hh{N}]).
\end{gather}
\end{definition}

If $A$ is a Calabi-Yau-$\mathcal{N}$ algebra, then $\D_{fd}(A)$
becomes a Calabi-Yau-$\mathcal{N}$ category.

\subsection{Calabi-Yau-$\XX$ completions for ddg-algebras, following Keller}\label{sec:CYX}
As in the previous section, we consider ddg-algebras indexed by $\ZZ \oplus \ZZ\XX$.

\begin{definition}
Let $A$ be a homologically smooth ddg-algebra
and $\Theta_A$ be its inverse dualizing complex.
The
{\it  Calabi-Yau-$\XX$ completion}
of $A$ is defined by
\[
\Pi_{\XX}(A):=\mathrm{T} _A(\theta)=
A \oplus \theta \oplus (\theta \otimes_A \theta) \oplus \cdots
\]
where $\theta:=\Theta_A[\XX-1]$ and
$\mathrm{T} _A(\theta)$ is the tensor algebra of $\theta$
over $A$.
\end{definition}

Keller's result can be adopted in this setting.
\begin{theorem}[\cite{Kel1}, Theorem 6.3 and \cite{K4}, Theorem 1.1]
For a homologically smooth ddg-algebra $A$,
the Calabi-Yau-$\XX$ completion $\Pi_{\XX}(A)$
becomes a Calabi-Yau-$\XX$ algebra.
In particular, $\D_{fd}(\Pi_{\XX}(A))$ is Calabi-Yau-$\XX$.
\end{theorem}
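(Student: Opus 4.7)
The plan is to mirror Keller's proof of the Calabi-Yau completion theorem \cite{Kel1} essentially verbatim, with the shift by $N$ replaced throughout by the shift by $\XX$; the extra grading does not interact with the homological arguments beyond bookkeeping. Write $B := \Pi_{\XX}(A)$ for brevity.

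The central step is to produce a canonical exact triangle of $B$-bimodules
\[
    B \otimes_A \theta \otimes_A B \xrightarrow{\alpha} B \otimes_A B \xrightarrow{m} B \to B \otimes_A \theta \otimes_A B[1],
\]
where $m$ is multiplication and $\alpha$ sends $b \otimes x \otimes b'$ to $bx \otimes b' - b \otimes xb'$ using the two embeddings of $\theta$ into $B$ as the tensor-degree-one component. Exactness follows from the standard filtration of $B = \mathrm{T}_A(\theta)$ by tensor degree together with an inductive cone computation. Homological smoothness of $B$ is then immediate from the triangle: since $A$ is homologically smooth, both $A$ and $\theta = \Theta_A[\XX-1]$ lie in $\Per A^e$, so the outer two terms of the triangle are in $\Per B^e$, hence so is $B$.

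To compute $\Theta_B$, apply $\RHom_{B^e}(-, B^e)$ to the triangle. Using the change-of-rings identity
\[
    \RHom_{B^e}(B \otimes_A M \otimes_A B,\, B^e) \cong B \otimes_A \RHom_{A^e}(M, A^e) \otimes_A B
\]
for $M \in \Per A^e$, together with $\RHom_{A^e}(A, A^e) = \Theta_A = \theta[1-\XX]$, I identify the dualized triangle with a shift by $[-\XX]$ of the original one, yielding the isomorphism $\Theta_B \cong B[-\XX]$ in $\Per B^e$, which is exactly the Calabi-Yau-$\XX$ property of Definition~\ref{defi:CY}.

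The main technical subtlety will be verifying that the sign conventions and the cyclic symmetry needed for the comparison map between the two triangles extend correctly when the shift is performed along the new grading $\XX$; since $\XX$ is independent of the cohomological grading and all Koszul signs only involve the latter, this extension is essentially formal, but the bookkeeping must be done with care. The final assertion that $\D_{fd}(\Pi_\XX(A))$ is a Calabi-Yau-$\XX$ category then follows by combining $\Theta_B \cong B[-\XX]$ with Lemma~\ref{lem:Serre}, which for any $M, N \in \D_{fd}(B)$ supplies precisely the natural isomorphism required in \eqref{eq:X}.
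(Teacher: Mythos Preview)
Your proposal is correct and faithfully reproduces Keller's argument from \cite{Kel1,K4}, which is precisely what the paper intends: the paper itself gives no proof for this theorem, stating only that ``Keller's result can be adopted in this setting'' and citing the references. Your sketch of the fundamental triangle $B \otimes_A \theta \otimes_A B \to B \otimes_A B \to B$, the deduction of smoothness, and the dualization to obtain $\Theta_B \cong B[-\XX]$ is exactly Keller's proof with the shift $[N]$ replaced by $[\XX]$; your remark that the extra grading is orthogonal to the Koszul sign rule is the only point that needs checking in passing from dg to ddg, and it is correct.
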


We also have the following.

\begin{lemma}\cite[Lemma.~4.4]{Kel1}\label{lem:cy}
The canonical projection (on the first component) $\Pi_{\XX}(A)\to  A$ induces an
Lagrangian-$\XX$ immersion
\[
    \hh{L}\colon \D_{fd}(A)\to\D_{fd}( \Pi_{\XX}(A) ),
\]
in the sense that for $L,M\in\D_{fd}(\ha)$, we have
\begin{gather}\label{eq:HOM}
    \RHom_\Pi(\hh{L}(L),\hh{L}(M))=\RHom_{\ha}(L,M)
        \oplus D\RHom_{\ha}(M,L)[-\XX].
\end{gather}
\end{lemma}

\begin{remark}
All statements of this section work not only for $\XX$ but also
for any $\mathcal{N} \in \ZZ \oplus \ZZ\XX$. However in this paper,
we mainly deal with the case $\mathcal{N}=\XX$.
\end{remark}

\subsection{Acyclic quiver case}
In the case when $A$ is a path algebra of an acyclic quiver, the
Calabi-Yau-$\XX$ completion $\Pi_{\XX}(A)$ has the following explicit description,
known as the Ginzburg Calabi-Yau algebra \cite{G}.

\begin{definition}\cite{G,Kel1}\label{def:CYXQ}
Let $Q=(Q_0,Q_1)$ be a finite acyclic quiver with
vertices $Q_0=\{1,\dots,n\}$ and arrows $Q_1$.
We introduce the {\it Ginzburg Calabi-Yau-$\XX$ ddg algebra}
$\Gamma_{\XX}Q :=(\k \overline{Q},d)$ as follows.
Define a $\ZZ\oplus\ZZ\XX$-graded quiver $\overline{Q}$ with vertices
$\overline{Q}_0=\{1,\dots,n\}$ and following arrows:
\begin{itemize}
\item an original arrow $a \colon i \to j \in Q_1$ (degree $0$);
\item an opposite arrow $a^* \colon j \to i$
for the original arrow $a \colon i \to j \in Q_1$ (degree $2-\XX$);
\item a loop $t_i=e_i^*$ for each vertex $i \in Q_0$ (degree $1-\XX$).
\end{itemize}
Let $\k \overline{Q}$ be a $\ZZ \oplus \ZZ\XX$-graded
path algebra of $\overline{Q}$,
and define a differential $\diff \colon \k \overline{Q} \to \k \overline{Q}$ of degree $1$ by
\begin{itemize}
\item $\diff a =\diff a^*= 0$ for $a \in Q_1$;
\item $\diff t_i = e_i \,\left(\sum_{a \in Q_1}(aa^* -a^*a)\right)\,e_i$;
\end{itemize}
where $e_i$ is the idempotent at $i \in Q_0$. Thus
we have the ddg algebra $\Gamma_{\XX}Q =(\k \overline{Q},\diff)$.
Note that the $0$-th homology is given by
$H^0(\Gamma_{\XX}Q) \cong \k Q$.
\end{definition}

Denote by $\DQ\colon=\D^b(\k Q)$ the bounded derived category of $\k Q$ and
$$\DXQ\colon=\D_{fd}(\Gamma_{\XX}Q)$$ the finite-dimensional derived category of $\Gamma_{\XX}Q$.
Then we have the following Calabi-Yau-$\XX$ version of Keller's Calabi-Yau completion.

\begin{corollary}\cite[Thm.~6.3]{Kel1}\label{cor:Lag}
The Calabi-Yau-$\XX$ completion $\Pi_{\XX}(k Q)$ of the path algebra $k Q$ is
isomorphic to the Ginzburg Calabi-Yau-$\XX$ algebra $\Gamma_{\XX}Q$.
\end{corollary}

In particular, $\D_\XX(Q)$ is Calabi-Yau-$\XX$.
There is an Lagrangian immersion
\begin{gather}\label{eq:L_Q}
    \hh{L}_Q\colon\DQ\to\DXQ.
\end{gather}
By abuse of notation, we may not distinguish $\DQ$ and $\hh{L}_Q(\DQ)$.

\subsection{$\CC^*$-equivariant coherent sheaves on canonical bundles}\label{sec:cool}

Let $X$ be a smooth projective variety over $\CC$ and
$\D^b (\Coh X)$ the bounded derived category of  coherent sheaves
on $X$. Due to \cite{BvdB}, there is a classical generator
$G \in \D^b (\Coh X)$ and \cite{O} gives the direct description
$G=\oplus_{i=1}^{d+1}L^i$ where $L$ is the ample line bundle on $X$
and $d=\dim X$.
Consider the endomorphism dg-algebra $A:=\RHom(G,G)$.
Then we have the equivalence of derived categories
\begin{gather}
    F:=\RHom(G,-) \colon \D(\mathrm{Qcoh} X)\to \D(A).
\end{gather}
Since $G$ is a classical generator of $\D^b (\Coh X)$, the restriction of $F$ gives the equivalence
\[
    F \colon \D^b (\Coh X) \xrightarrow{\cong} \Per A.
\]
By \cite[Lemma 3.4.1]{BvdB}, the object
$G \boxtimes G^*$ is a generator of $ \D^b (\Coh (X \times X))$. Thus we also have the equivalence
\begin{gather}
    F^{e}:=\RHom(G \boxtimes G^*,-) \colon
     \D^b (\Coh (X \times X)) \xrightarrow{\cong} \Per A^e
\end{gather}
since $\RHom(G \boxtimes G^*,G \boxtimes G^*)\cong A \otimes A^{op}=A^e $.

We note that $\Per X \times X \cong D^b(\Coh (X \times X))$ since $X$ is smooth.
For a perfect object $\E \in \Per X \times X$, define the derived dual by
\[
    \E^{\vee}:=\hom_{\OO_{X \times X}}(\E,\OO_{X \times X})
     \in \Per X \times X.
\]
Similarly for a perfect object $M \in \per A^e$,
let $M^{\vee}:=\Hom_{A^e}(M,A^e)$ be its derived dual.
By definition, $M^{\vee}$ has a natural left
$A^e$-module structure (so the right $(A^e)^{op}$-module structure).
We note $(A^e)^{op}\cong A^{op}\otimes A$.
Through the algebra homomorphism
\[\begin{array}{ccc}
    \tau \colon A\otimes A^{op}&\to& A^{op}\otimes A,\\
    \quad a \times b &\mapsto& (-1)^{|a||b|}b \otimes a
\end{array}\]
exchanging two components, we can define the right $A^e$-module structure on $M^{\vee}$.
Thus $M^{\vee}$ can be regarded as an object in $\per A^e$.

\begin{lemma}
\label{lem:canonical}
Let $\Delta \colon X \to X \times X$ be the diagonal embedding. Then
\[
F^{e}(\Delta_*\mathcal{K}_X^{-1})=\Theta_{A}[d].
\]
\end{lemma}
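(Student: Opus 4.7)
The plan is to interpret both $\Delta_{*}\K_X^{-1}$ and $\Theta_A$ as Fourier--Mukai kernels representing the inverse Serre functor, and then invoke uniqueness of such a representing object under the equivalence $F^{e}$.

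First, I would upgrade $F$ and $F^{e}$ to a dictionary between Fourier--Mukai kernels and $A$-bimodule kernels. A kernel $\E\in\D^{b}(\Coh(X\times X))$ defines a functor $\Phi_{\E}$ on $\D^{b}(\Coh X)$, while an object $M\in\per A^{e}$ defines the functor $(-)\otimes^{L}_{A}M$ on $\per A$. The key compatibility
\[
F\circ\Phi_{\E} \iso \bigl((-)\otimes^{L}_{A}F^{e}(\E)\bigr)\circ F
\]
should be proved by reducing, via the generation of $\per A^{e}$ by $A$ (equivalently the generation of $\D^b(\Coh(X\times X))$ by $G\boxtimes G^{*}$), to the case $\E=\Delta_{*}\OO_{X}$. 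There one needs only $F^{e}(\Delta_{*}\OO_{X})\cong A$, so that both sides implement the identity functor, and this last identity follows from $\RHom$-adjunction for $\Delta$.

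Next, I would identify the inverse Serre kernels on both sides. Grothendieck--Serre duality on the smooth projective variety $X$ identifies the Serre functor of $\D^{b}(\Coh X)$ with the Fourier--Mukai transform having kernel $\Delta_{*}\K_{X}[d]$, so its inverse has kernel $\Delta_{*}\K_{X}^{-1}[-d]$. On the algebra side, Lemma \ref{lem:Serre} is precisely the statement that $(-)\otimes^{L}_{A}\Theta_{A}$ is the inverse Serre functor of $\D_{fd}(A)$, so $\Theta_{A}$ is the bimodule kernel realising it. Since $F$ is an equivalence it intertwines the two inverse Serre functors, and since the bimodule representing a functor of the form $(-)\otimes^{L}_{A}(-)$ is unique up to isomorphism, the dictionary above forces
\[
F^{e}(\Delta_{*}\K_{X}^{-1}[-d]) \iso \Theta_{A},
\]
which is equivalent to the claim after rearranging the shift.

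The hard part is installing the dictionary itself: the compatibility between convolution of Fourier--Mukai kernels on $X\times X$ and derived tensor product of $A$-bimodules under $F^{e}$. This is a formal consequence of the projection formula, base change along $\Delta$, and the fact that $G\boxtimes G^{*}$ is a generator of $\D^{b}(\Coh(X\times X))$, but it is the technical heart of the argument; once installed, matching inverse Serre kernels is automatic from uniqueness.
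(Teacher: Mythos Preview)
Your approach is valid and genuinely different from the paper's. The paper never invokes Serre functors; instead it first computes $F^{e}(\Delta_{*}\OO_{X})=A$ by the same adjunction you use, and then shows that $F^{e}$ commutes with taking derived duals, i.e.\ $F^{e}(\E^{\vee})\cong F^{e}(\E)^{\vee}$ where $\E^{\vee}=\RHom_{\OO_{X\times X}}(\E,\OO_{X\times X})$ and $M^{\vee}=\RHom_{A^{e}}(M,A^{e})$ (this is the paper's Lemma~\ref{lem:derived_dual}). A Grothendieck--Verdier computation for the closed embedding $\Delta$ gives $(\Delta_{*}\OO_{X})^{\vee}\cong\Delta_{*}\K_{X}^{-1}[-d]$, and since $A^{\vee}=\Theta_{A}$ by definition of the inverse dualising complex, the lemma follows. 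Your route is more conceptual and sidesteps the explicit Grothendieck--Verdier calculation, but it leans on the full Fourier--Mukai/bimodule dictionary (essentially the monoidality of $F^{e}$, which the paper records separately as Lemma~\ref{lem:monoidal} but does not use in this proof). The paper's approach is more self-contained here; yours generalises better to other canonical kernels.

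One slip to correct: $A$ does \emph{not} generate $\per A^{e}$, and correspondingly $\Delta_{*}\OO_{X}$ does not generate $\D^{b}(\Coh(X\times X))$; the generators are $A^{e}$ and $G\boxtimes G^{*}$. So your reduction of the compatibility $F\circ\Phi_{\E}\cong\bigl((-)\otimes_{A}^{L}F^{e}(\E)\bigr)\circ F$ to the single case $\E=\Delta_{*}\OO_{X}$ is not valid as stated. The cleanest fix is to vary the argument rather than the kernel: for fixed $\E$, both sides are exact in $M$ and $G$ generates $\D^{b}(\Coh X)$, so it suffices to evaluate at $M=G$, where the claim becomes $\RHom_{X}(G,\Phi_{\E}(G))\cong F^{e}(\E)$, checkable by adjunction and the projection formula (with care for the left/right $A^{e}$-module conventions).
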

\begin{proof}
First we check that $F^e({\Delta_* \mathcal{O}_X})=A$. By definition,
$G \boxtimes G^*=p_1^*G \otimes p_2^* G^*$ where $p_i$ are
projections from $X \times X $ to the $i$-th component. Then
\begin{align*}
F^e(\Delta_* \mathcal{O}_X)
&=\RHom(p_1^*G \otimes p_2^* G^*, \Delta_* \mathcal{O}_X) \\
&=\RHom(p_1^*G ,p_2^* G \otimes {\Delta_* \mathcal{O}_X}) \\
&=\RHom(p_1^*G ,\Delta_*( G \otimes \mathcal{O}_X)) \\
&=\RHom(\Delta^* p_1^*G , G )=A.
\end{align*}
By definition, the derived dual $A^{\vee}$ in $\per(A^e)$ is $\Theta_A$. On the other hand,
the derived dual $(\Delta_* \mathcal{O}_X)^{\vee}$ in $\per X\times X$
can be computed as follows.
By using the Grothendieck-Verdier duality, we have
\[
(\Delta_*\OO_X)^{\vee}=\hom(\Delta_* \OO_X, \OO_{X \times X})
\cong \Delta_* \hom (\OO_X, \Delta^* \OO_{X\times X} \otimes \K_{\Delta}[-d])
\]
where $d=\dim X$ is the relative dimension of $\Delta$, and
\[
    \K_{\Delta}:=\K_X \otimes \Delta^* \K_{X \times X}^{-1}
\]
is the relative dualizing bundle of $\Delta$.
Note that $\Delta^* \OO_{X \times X} \cong \OO_X$. Thus we have
\begin{align*}
\Delta_* \hom_{\OO_X} (\OO_X, \Delta^* \OO_{X \times X} \otimes \K_{\Delta}[-d])\cong
\Delta_*(\K_X \otimes \Delta^* \K_{X \times X}^{-1}[-d]).
\end{align*}
Finally, the adjunction formula for $\Delta \colon X \to X \times X$ implies
\[
    \K_X \cong \Delta^* \K_{X \times X} \otimes \bigwedge^{\rank \mathcal{N}}\mathcal{N}
\]
where $\mathcal{N}$ is the normal bundle and in this case, $\mathcal{N} \cong \mathcal{T}_X$.
So $\Delta^* \K_{X \times X}\cong \K_X^{\otimes 2}$ and hence
we have $(\Delta_* \mathcal{O}_X)^{\vee} \cong \K_X^{-1}[-d]$.
Since $F^e$ commutes with taking the derived dual by next lemma,
the result follows.
\end{proof}
We  show $F^e$ commutes with taking the derived dual.
\begin{lemma}
\label{lem:derived_dual}
There is an isomorphism of right $A^e$-modules
\[
F^e(\E^{\vee})\cong F^e(\E)^{\vee}
\]
for $\E \in \per (X\times X) $.
\end{lemma}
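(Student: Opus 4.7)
The plan is to verify the identity on a classical generator and extend by contravariant triangulated naturality. Both $\E\mapsto F^e(\E^{\vee})$ and $\E\mapsto F^e(\E)^{\vee}$ are contravariant triangulated functors from $\per(X\times X)$ to $\per A^e$ (viewed as right $A^e$-modules per the convention recalled before the lemma), and $G\boxtimes G^*$ is a classical generator of $\per(X\times X)$, so it suffices to produce a natural transformation which restricts to an isomorphism on $G\boxtimes G^*$ compatibly with the $A^e$-action.

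First I would unwind the right-hand side using the equivalence. Since $F^e(G\boxtimes G^*)=A^e$, full faithfulness of $F^e$ gives
\[
    F^e(\E)^{\vee}=\RHom_{A^e}(F^e(\E),A^e)\cong\RHom_{A^e}\bigl(F^e(\E),F^e(G\boxtimes G^*)\bigr)\cong\RHom_{X\times X}(\E,G\boxtimes G^*),
\]
with the right $A^e$-action arising from post-composition with $A^e=\End(G\boxtimes G^*)$. Next I would unwind the left-hand side via the tensor--Hom adjunction $\RHom(A,\RHom(B,C))\cong\RHom(B,\RHom(A,C))$:
\[
    F^e(\E^{\vee})=\RHom\bigl(G\boxtimes G^*,\RHom(\E,\OO_{X\times X})\bigr)\cong\RHom\bigl(\E,(G\boxtimes G^*)^{\vee}\bigr)\cong\RHom(\E,G^*\boxtimes G),
\]
where the last identification uses that $G=\oplus_{i=1}^{d+1}L^i$ is locally free, so $(G\boxtimes G^*)^{\vee}\cong G^{\vee}\boxtimes G^{**}\cong G^*\boxtimes G$.

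The two expressions $\RHom(\E,G\boxtimes G^*)$ and $\RHom(\E,G^*\boxtimes G)$ differ by the swap involution $\sigma\colon X\times X\to X\times X$ exchanging the two factors, which carries $G\boxtimes G^*$ to $G^*\boxtimes G$. Pullback along $\sigma$ is an equivalence identifying the two, and the induced map on $A^e=\End(G\boxtimes G^*)$ is precisely the algebraic swap $\tau$ defined before the lemma. Composing these identifications yields a natural candidate for $F^e(\E^{\vee})\cong F^e(\E)^{\vee}$ of right $A^e$-modules. Having constructed it on the generator in a way compatible with the $A^e$-action, a standard devissage --- both functors are contravariant and triangulated, preserve direct summands, and $G\boxtimes G^*$ classically generates $\per(X\times X)$ --- propagates the isomorphism to every perfect $\E$.

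The hard part is the bookkeeping of the $A^e$-bimodule structures through each step and verifying that the geometric swap $\sigma$ induces exactly $\tau$ on composition, including the Koszul signs $(-1)^{|a||b|}$ that appear in the definition of $\tau$. In particular, the tensor--Hom reshuffling used to rewrite $F^e(\E^{\vee})$ must be checked to preserve the right $A^e$-action coming from $\End(G\boxtimes G^*)$ rather than silently switching to an opposite action; matching this with the swap coming from $\sigma$ is where $\tau$ plays its decisive role, and this is the step I expect to require the most care.
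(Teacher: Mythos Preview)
Your overall route mirrors the paper's: both sides are unwound (you via tensor--Hom adjunction, the paper via the dualization identity $\RHom(A,B)\cong\RHom(B^{\vee},A^{\vee})$) until they become $\RHom$-spaces that differ only by swapping the box-factors $G$ and $G^*$. You land on $\RHom(\E, G\boxtimes G^*)$ versus $\RHom(\E, G^*\boxtimes G)$; the paper lands on $\RHom(G\boxtimes G^{\vee}, \E^{\vee})$ versus $\RHom(G^{\vee}\boxtimes G, \E^{\vee})$, which is your pair after one further dualization. At that point the paper simply asserts the two are isomorphic as complexes and checks that the $A^e$-actions match through $\tau$; it does not invoke the geometric swap.

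The genuine gap in your argument is precisely the use of the swap $\sigma\colon X\times X\to X\times X$. Pullback by $\sigma$ is an equivalence, but it acts on \emph{both} arguments of $\RHom$: it carries $\RHom(\E, G\boxtimes G^*)$ to $\RHom(\sigma^*\E,\, \sigma^*(G\boxtimes G^*)) = \RHom(\sigma^*\E, G^*\boxtimes G)$, not to $\RHom(\E, G^*\boxtimes G)$. Since $\sigma^*\E \not\cong \E$ in general --- and even on your chosen generator $\E = G\boxtimes G^*$ one has $\sigma^*\E = G^*\boxtimes G \not\cong G\boxtimes G^*$ --- this does not furnish the natural transformation you need. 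Your devissage strategy is sound in principle, but it is left without its required input: the map you propose via $\sigma$ is not a map between the two functors evaluated at the same object, only an identification of their values at two different objects, so the ``compatibility with the $A^e$-action'' you appeal to is never actually established.
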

\begin{proof}
The left hand side is
\[
F^e(\E^{\vee})=\Hom(G \boxtimes G^{\vee}, \E^{\vee}).
\]
The right hand side is
\begin{align*}
F^e(\E)^{\vee}&=\Hom_{A^e}(\Hom_{X \times X}(G \boxtimes G^{\vee},\E),A^e)\\
&\cong \Hom_{A^e}(\Hom_{X \times X}(G \boxtimes G^{\vee},\E),
\Hom_{X \times X}(G \boxtimes G^{\vee},G \boxtimes G^{\vee})) \\
&\cong \Hom_{X \times X}(\E, G \boxtimes G^{\vee}) \\
&\cong  \Hom_{X \times X}( (G \boxtimes G^{\vee})^{\vee},\E^{\vee})\\
&\cong  \Hom_{X \times X}( G^{\vee} \boxtimes G,\E^{\vee}).
\end{align*}
Clearly, $F^e(\E^{\vee})$ is isomorphic to $F^e(\E)^{\vee}$ as complexes of vector spaces.
Since $A \otimes A^{op}\cong \Hom_X(G,G)\otimes \Hom_X(G^{\vee},G^{\vee})$ acts on this
through the exchange
\[
\tau \colon  \Hom_X(G,G)\otimes \Hom_X(G^{\vee},G^{\vee})\to
 \Hom_X(G^{\vee},G^{\vee}) \otimes  \Hom_X(G,G),
\]
$A^e$-module structures coincide.
\end{proof}

\begin{remark}
Here we show Lemma \ref{lem:canonical} for a smooth projective variety.
However, the result also holds for a smooth quasi-projective variety.
See \cite[Proposition 3.10]{HK}.
\end{remark}

Let \[
p_{ij} \colon X \times X \times X \to X \times X
\]
be projections on the $i$-the and $j$-th component for $1 \le i<j \le 3$.

Define the convolution product
\[
* \colon \D^b(\Coh (X\times X))\times \D^b(\Coh (X\times X))  \to \D^b(\Coh (X\times X))
\]
by
\[
(\E,\mathcal{F})\mapsto \E*\mathcal{F}:=
p_{13 *}(p_{12}^* \E \otimes p^*_{23}\mathcal{F}).
\]
Then $\D^b(\Coh (X\times X))$ has the monoidal structure by the convolution product
with the unit object $\Delta_* \OO_X$.
On the other hand, the category $\per A^e$ has the monoidal structure
by the tensor product $M \otimes_A N$ for $M,N \in \per A^e$ with the unit object $A$.
It is easy to check the following.
\begin{lemma}
\label{lem:monoidal}
\indent
\begin{itemize}
\item The functor
$\Delta_* \colon \D^b(\Coh X) \to \D^b(\Coh (X\times X))$ is a monoidal functor.
\item The functor $F^{e}\colon
 \D^b (\Coh (X \times X)) \to \Per A^e$ is a monoidal functor.
\end{itemize}
\end{lemma}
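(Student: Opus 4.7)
The plan is to verify the two parts separately, since each requires checking (a) that the unit object is preserved and (b) that the monoidal product is intertwined.

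For the first bullet, the unit of the convolution on $\D^b(\Coh(X\times X))$ is $\Delta_*\mathcal{O}_X$ by definition, so preservation of units is automatic. For the product, I would show that for $\E,\mathcal{F}\in\D^b(\Coh X)$ there is a natural isomorphism $\Delta_*\E*\Delta_*\mathcal{F}\cong\Delta_*(\E\otimes\mathcal{F})$. The proof is a diagram chase using the triple diagonal $\delta\colon X\to X^3$, $x\mapsto(x,x,x)$: by flat base change along the Cartesian square with $p_{12}$ (resp.\ $p_{23}$), one has $p_{12}^*\Delta_*\E\cong(\Delta\times\id)_*p_1^*\E$ and similarly for $\E$ replaced by $\mathcal{F}$; the tensor product of these two pushforwards is scheme-theoretically supported on the small diagonal and is identified with $\delta_*(\E\otimes\mathcal{F})$ by the projection formula. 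Applying $p_{13*}$ and using $p_{13}\circ\delta=\Delta$ yields the result.

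For the second bullet, the key input is the computation $F^e(\Delta_*\mathcal{O}_X)\cong A$, which was already carried out at the beginning of the proof of Lemma~\ref{lem:canonical} (via adjunction and $\Delta^*p_1^*G=G$). This gives the unit compatibility up to shift. For the tensor product, the plan is to invoke the general principle that under a derived Morita equivalence induced by the tilting generator $G\boxtimes G^*$ on $X\times X$, Fourier-Mukai convolution of kernels corresponds to tensor product of bimodules. Concretely, for $\E,\mathcal{F}\in\per(X\times X)$ one has
\[
F^e(\E*\mathcal{F})=\RHom_{X\times X}(G\boxtimes G^*,\,p_{13*}(p_{12}^*\E\otimes p_{23}^*\mathcal{F})),
\]
and pulling the outer $\RHom$ inside the pushforward (adjunction) and splitting $G\boxtimes G^*$ across the two factors expresses this as $F^e(\E)\otimes_A F^e(\mathcal{F})$, with the inner $A$-tensor coming from the contraction along the middle copy of $X$.

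The main obstacle is the bookkeeping for the bimodule structure on the right-hand side: one must verify that the $A^e$-module structure induced by $G\boxtimes G^*$ on $F^e(\E*\mathcal{F})$ matches the $A^e$-module structure on $F^e(\E)\otimes_A F^e(\mathcal{F})$, and this is where the exchange isomorphism $\tau$ introduced before Lemma~\ref{lem:canonical} enters. Once this sign/order compatibility is in hand, the associativity and unit constraints follow formally from those of $*$ and $\otimes_A$, so no further nontrivial coherence check is needed.
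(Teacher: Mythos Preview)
The paper does not give a proof of this lemma at all: it is preceded only by the sentence ``It is easy to check the following.'' Your proposal therefore supplies the argument that the paper omits, and the route you take---base change and the projection formula for $\Delta_*$, and the standard Fourier--Mukai kernel / bimodule correspondence for $F^e$---is the expected one. One small correction: in the second bullet you write that $F^e(\Delta_*\mathcal{O}_X)\cong A$ gives the unit compatibility ``up to shift,'' but there is no shift; the computation in the proof of Lemma~\ref{lem:canonical} gives $F^e(\Delta_*\mathcal{O}_X)\cong A$ exactly.
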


Let $Y:=\mathrm{V}(\mathcal{K}_X)$ be the total space of the canonical bundle
of $X$. We note that as a scheme, $Y$ is given by
\[
Y=\mathrm{Spec}_X \mathrm{T} \mathcal{K}_X^{-1}
\]
where $\mathrm{T} \mathcal{K}_X^{-1}$ is the tensor algebra of the inverse of the canonical sheaf.
(Since $\mathcal{K}_X^{-1}$ is rank one, the tensor algebra
$\mathrm{T} \mathcal{K}_X^{-1}$ coincides with the symmetric algebra
$\mathrm{S} \mathcal{K}_X^{-1}$.)
We note that the bounded derived category of coherent sheaves on $Y$
can be identified with the bounded derived category of finite rank
$\mathrm{T} \mathcal{K}_X^{-1}$-modules on $X$:
\[
\D^b (\Coh Y) \cong \D^b(\mathrm{mod-} \mathrm{T} \mathcal{K}_X^{-1} ).
\]
Since Lemma \ref{lem:canonical} and Lemma \ref{lem:monoidal} imply
\[
F^e(\Delta_* \mathrm{T} \mathcal{K}_X^{-1})=\mathrm{T}(\Theta_A[d])=\Pi_{d+1}(A),
\]
we have an equivalence
\[
\D^b(\Coh Y) \cong \Per \Pi_{d+1}(A).
\]
Denote by $\D^b_c(Y)$ the derived category of coherent sheaves on $Y$ with compact
support cohomology. Then we also have an equivalence
\[
\D^b_c(\Coh Y) \cong \D_{fd}(\Pi_{d+1}(A)).
\]

Next we consider the fiber scaling action of
$\CC^*$ on $Y$ and $\CC^*$-equivariant coherent sheaves on $Y$.
For a $\CC^*$-module $M$, we have the weight decomposition
\[
M:=\bigoplus_{m \in \ZZ}M^m
\]
where $t \in \CC^*$ acts on $M^d$ by $t^d$. We denote by $M\{1\}$ the weight one shift of $M$,
namely $M\{1\}^m:=M^{m+1}$.
The $\CC^*$-action on $Y$ induces the weight decomposition on $\mathrm{T} \mathcal{K}_X^{-1}$:
\[
(\mathrm{T} \mathcal{K}_X^{-1})_{\mathrm{gr}}:=\mathrm{T}(\mathcal{K}_X^{-1}\{1\})=
\bigoplus_{m \ge 0}\mathcal{K}_X^{-m}\{m\}
\]
where $t\in \CC$ acts on $\mathcal{K}_X^{-m}\{m\}$ as $t^{-m}$.
We regard $(\TK)_{\mathrm{gr}}$ as a $\ZZ$-graded algebra through the
above weight decomposition.
Then a $\CC^*$-equivariant coherent sheaf on $Y$ can be identified with
a finite rank $\ZZ$-graded $(\TK)_{\mathrm{gr}}$-module over $X$.
Again there is an equivalence
\[
    \D^b_{\CC^*}(Y) \cong \D^b(\mathrm{grmod-}(\TK)_{\mathrm{gr}} )
\]
where the left hand side is the bounded derived category of $\CC^*$-equivariant
coherent sheaves on $Y$ and the right hand side is
the bounded derived category of finite rank
$\ZZ$-graded $(\TK)_{\mathrm{gr}}$-modules over $X$.

Through the identification of $\mathcal{K}_X^{-1}$ with $\Theta_A[d]$,
one can define the extra $\ZZ$-grading structure on $\mathrm{T}(\Theta_A[d])$ by
\[
\mathrm{T}(\Theta_A[d])_{\mathrm{gr}}:=\bigoplus_{m \ge 0}(\Theta_A[d]\{1\})^{\otimes m}.
\]
If we set $[\XX]:=[d+1]\{1\}$, then by definition we regard
$\mathrm{T}(\Theta_A[d])_{\mathrm{gr}}$ as the Calabi-Yau-$\XX$ completion $\Pi_{\XX}(A)$.
Thus we obtain the following equivalences.
\begin{proposition}
There is an equivalence of derived categories
\begin{gather}
    \D^b_{\CC^*}(Y) \cong \Per \Pi_{\XX}(A)
\end{gather}
and the shift $[d+1]\{1\}$ in the left hand side coincides with the shift $[\XX]$ in the right hand side.
By restricting objects in the left hand side to those with compact support cohomology,
we have an equivalence
\begin{gather}
    \D^b_{c,\CC^*}(Y) \cong \D_{fd}(\Pi_{\XX}(A) ).
\end{gather}
In particular, $\D^b_{c,\CC^*}(Y)$ is Calabi-Yau-$[d+1]\{1\}$.
\end{proposition}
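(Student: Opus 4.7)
The plan is to lift the non-equivariant equivalence $\D^b(\Coh Y) \cong \Per \Pi_{d+1}(A)$, already established in the paragraphs preceding the statement, to the $\CC^*$-equivariant setting by tracking the extra $\ZZ$-grading coming from fiber scaling. First I would recall that the $\CC^*$-action on $Y = \Spec_X \mathrm{T}\K_X^{-1}$ equips $\mathrm{T}\K_X^{-1}$ with the weight decomposition
\[
(\mathrm{T}\K_X^{-1})_{\mathrm{gr}} = \bigoplus_{m\ge 0}\K_X^{-m}\{m\},
\]
so that $\CC^*$-equivariant coherent sheaves on $Y$ correspond to finite-rank $\ZZ$-graded modules over $(\mathrm{T}\K_X^{-1})_{\mathrm{gr}}$ on $X$; this is already spelled out in the text and gives the equivalence $\D^b_{\CC^*}(Y) \cong \D^b(\mathrm{grmod}\text{-}(\mathrm{T}\K_X^{-1})_{\mathrm{gr}})$.

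Next I would transport the $\ZZ$-grading through the derived equivalences. Using Lemma~\ref{lem:canonical}, the monoidality of $F^e$ and $\Delta_*$ from Lemma~\ref{lem:monoidal}, and the fact that $\K_X^{-1}$ is sent to $\Theta_A[d]$, the graded algebra $(\mathrm{T}\K_X^{-1})_{\mathrm{gr}}$ corresponds term-by-term to
\[
\mathrm{T}(\Theta_A[d])_{\mathrm{gr}} = \bigoplus_{m\ge 0}(\Theta_A[d]\{1\})^{\otimes m}.
\]
Setting $[\XX] := [d+1]\{1\}$, the extra $\{1\}$-weight is absorbed together with the $[d]$ shift exactly into the $[\XX-1]$ shift appearing in Keller's definition $\Pi_\XX(A) = \mathrm{T}_A(\Theta_A[\XX-1])$, so $\mathrm{T}(\Theta_A[d])_{\mathrm{gr}}$ is identified with $\Pi_\XX(A)$ as a ddg-algebra. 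Passing to derived categories of graded modules then yields the desired equivalence $\D^b_{\CC^*}(Y) \cong \Per \Pi_\XX(A)$, with the shift $[d+1]\{1\}$ on the left corresponding to $[\XX]$ on the right by construction.

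For the compact-support version I would observe that the equivalence restricts well: a $\CC^*$-equivariant coherent sheaf on $Y$ has compact-support cohomology if and only if, as a $(\mathrm{T}\K_X^{-1})_{\mathrm{gr}}$-module on $X$, its underlying object is $\mathcal{K}_X^{-1}$-torsion, and under $F$ this matches exactly the finiteness condition defining $\D_{fd}(\Pi_\XX(A))$. Hence the restriction of the equivalence gives $\D^b_{c,\CC^*}(Y) \cong \D_{fd}(\Pi_\XX(A))$. The Calabi-Yau-$[d+1]\{1\}$ property then follows from Keller's theorem that $\Pi_\XX(A)$ is a Calabi-Yau-$\XX$ algebra (so that $\D_{fd}(\Pi_\XX(A))$ is Calabi-Yau-$\XX$ in the sense of Definition~\ref{defi:CY}), combined with the shift identification $[\XX] = [d+1]\{1\}$.

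The main obstacle is the careful bookkeeping of the two gradings: the cohomological $\ZZ$-grading (the shift $[1]$) and the equivariant weight $\ZZ$-grading (the shift $\{1\}$) must be matched consistently against the bigrading $\ZZ \oplus \ZZ\XX$ built into the ddg-algebra $\Pi_\XX(A)$. The cleanest way to do this is to verify the identification $[\XX] = [d+1]\{1\}$ on generators of $\Pi_\XX(A)$ and then extend multiplicatively; everything else reduces to monoidality of $F^e$ and the derived dual computation already given in Lemma~\ref{lem:derived_dual}.
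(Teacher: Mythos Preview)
Your proposal is correct and follows essentially the same route as the paper: the proposition there is stated without a separate proof, appearing as the direct summary of the preceding paragraphs (the weight decomposition of $(\mathrm{T}\K_X^{-1})_{\mathrm{gr}}$, the identification $F^e(\Delta_*\K_X^{-1})=\Theta_A[d]$ via Lemma~\ref{lem:canonical}, the monoidality of $F^e$ and $\Delta_*$ from Lemma~\ref{lem:monoidal}, and the definition $[\XX]:=[d+1]\{1\}$), which is exactly the chain of reasoning you have reconstructed. Your additional remarks on the compact-support restriction and the bigrading bookkeeping make explicit points the paper leaves implicit, but the argument is the same.
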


\begin{example}
Let $X$ be the projective line $\kong{P}^1$ and $Q$ be the Kronecker quiver (i.e. type $\widetilde{A_{1,1}}$).
Then there is a triangulated equivalence
\begin{gather}\label{eq:KQ}
    \D^b(\coh X) \cong \DQ.
\end{gather}
Then the proposition above implies the Calabi-Yau-$\XX$ version of this equivalence
\begin{gather}\label{eq:KQ2}
        \D^b_{c,\CC^*}(Y) \cong \DXQ
\end{gather}
where $Y=T^* \kong{P}^1$ and $\XX=[2]\{1\}$.
In this case we can describe the above category as the derived category
of the graded preprojective algebra of $Q$ as follows.
Consider the graded double quiver $\widetilde{Q}_{\mathrm{gr}}$
\[
\begin{tikzpicture}[xscale=0.5,
  arrow/.style={->,>=stealth},
  equalto/.style={double,double distance=2pt},
  mapto/.style={|->}]
\draw[font=\scriptsize](0,-1)node[above]{$a_1,a_2$};
\draw[font=\scriptsize](0,-1.3)node[below]{$a^*_1,a^*_2$};
  \node at (2.4,-1.15){$2$};
  \node at (-2.4,-1.15) {$1$};
\foreach \n/\m in {1/3}
    {\draw[arrow] (-2,-1) to (2,-1);
     \draw[arrow] (-2,-1.1) to (2,-1.1);
     \draw[arrow] (2,-1.2) to (-2,-1.2);
     \draw[arrow] (2,-1.3) to (-2,-1.3);
     }
\end{tikzpicture}
\]
with the grading $\deg a_i=0$ and $\deg a^*_i=-1$ for $i=1,2$.
Let
\[
\Pi(Q)_{\mathrm{gr}}:=\CC \widetilde{Q}_{\mathrm{gr}}
\slash (a_1 a_1^*-a_1^* a_1,a_2 a_2^*-a_2^* a_2)
\]
be the graded preprojective algebra of $Q$ and
$\D^b(\mathrm{grmod-}\Pi(Q)_{\mathrm{gr}})$ be the bounded derived category of
finite dimensional graded modules over $\Pi(Q)_{\mathrm{gr}}$.
Then this grading fits into the weight of $\CC^*$-action on $Y$ and we have
an equivalence
\[
 \D^b_{c,\CC^*}(Y) \cong \D^b(\mathrm{grmod-}\Pi(Q)_{\mathrm{gr}}).
\]
\end{example}
\begin{example}
When $X$ is the projective plane $\kong{P}^2$,
then there is a corresponding version of \eqref{eq:KQ},
where $Q$ is the following quiver
\[
\begin{tikzpicture}[xscale=0.5,
  arrow/.style={<-,>=stealth},
  equalto/.style={double,double distance=2pt},
  mapto/.style={|->}]
\node (x2) at (6,-1){};
\node (x1) at (2,-1){};
\node (x3) at (-2,-1){};
\draw[font=\scriptsize](0,-1)node[above]{$x_1,y_1,z_1$}
    (4,-1)node[above]{$x_2,y_2,z_2$};
  \node at (x1){$2$};
  \node at (x2){$3$};
  \node at (x3){$1$};
\foreach \n/\m in {1/3,2/1}
    {\draw[arrow] (x\n.150) to (x\m.30);
     \draw[arrow] (x\n.-150) to (x\m.-30);
     \draw[arrow] (x\n) to (x\m);}
\end{tikzpicture}
\]
with some commutative relations
$$a_1 b_2=b_1 a_2,\quad a,b\in\{x,y,z\}.$$
We can also upgraded that to a Calabi-Yau-$\XX$ version \eqref{eq:KQ2}.
Again we can describe the above category as follows.
Consider the following graded quiver
with graded potential $(\widetilde{Q}_{\mathrm{gr}},W_{\mathrm{gr}})$
\[
\begin{tikzpicture}[scale=0.6,
  arrow/.style={->,>=stealth},
  equalto/.style={double,double distance=2pt},
  mapto/.style={|->}]
\node (x2) at (0,2.){};
\node (x1) at (2,-1){};
\node (x3) at (-2,-1){};
  \node at (x1){$_3$};
  \node at (x2){$_2$};
  \node at (x3){$_1$};
\foreach \n/\m in {1/3}
    {\draw[arrow] (x\n.150) to (x\m.30);
     \draw[arrow] (x\n.-150) to (x\m.-30);
     \draw[arrow] (x\n) to (x\m);}
\foreach \n/\m in {2/1}
    {\draw[arrow] (x\n.150+120) to (x\m.30+120);
     \draw[arrow] (x\n.-150+120) to (x\m.-30+120);
     \draw[arrow] (x\n) to (x\m);}
\foreach \n/\m in {3/2}
    {\draw[arrow] (x\n.150-120) to (x\m.30-120);
     \draw[arrow] (x\n.-150-120) to (x\m.-30-120);
     \draw[arrow] (x\n) to (x\m);}
\draw[font=\scriptsize](-.8,1.8)node[left,rotate=60]{$x_1,y_1,z_1$}
    (.6,1.8)node[right,rotate=-54]{$x_2,y_2,z_2$}
    (0,-1.2)node[below]{$x_3,y_3,z_3$};
\draw    (-5,1)node{$\widetilde{Q}_{\mathrm{gr}}:$}
    (-1,-3)node{$\displaystyle{W=\sum_{i=1}^3 (x_iy_iz_i-x_iz_iy_i)}$};
\end{tikzpicture}
\]
where $\deg x_3, y_3, z_3=-1$ and gradings of other arrows are zero.
Then we have an associated graded Jacobi algebra
$\mathrm{J}(\widetilde{Q}_{\mathrm{gr}},W_{\mathrm{gr}})$.
Thus there is an equivalence
\[
 \D^b_{c,\CC^*}(Y) \cong
 \D^b(\mathrm{grmod-}\mathrm{J}(\widetilde{Q}_{\mathrm{gr}},W_{\mathrm{gr}})).
\]
Finally we note that its $\XX=3$-reduction implies to forget the grading
from $(\widetilde{Q}_{\mathrm{gr}},W_{\mathrm{gr}})$
since $\XX=[3]\{1\}$.
Therefore the $\XX=3$-reduction gives the equivalence
\begin{gather}\label{eq:KQ3}
        \D^b_{\kong{P}^2}(Y) \cong \D^b(\mathrm{mod-} \mathrm{J}(\widetilde{Q},W)),
\end{gather}
between the derived category of coherent sheaves on the
local $\kong{P}^2$, considered in \cite{BM},
and the derived category of finite dimensional modules over the Jacobi algebra
of $(\widetilde{Q},W)$ which is obtained by forgetting the grading of
$(\widetilde{Q}_{\mathrm{gr}},W_{\mathrm{gr}})$.
We will explore this direction/application in the future studies.
\end{example}

\section{$q$-Stability conditions on $\XX$-categories}\label{sec:QS}
\subsection{Bridgeland stability conditions}\label{sec:BSC}
First we recall the definition of Bridgeland stability conditions
on triangulated categories from \cite{B1}.
Throughout this section,
we assume that for a triangulated category $\D$, its Grothendieck group
$K(\D)$ is free of finite rank, i.e. $K(\D) \cong \ZZ^{\oplus n}$ for some $n$.

\begin{definition}
\label{def:stab}
Let $\D$ be a triangulated category.
A {\it stability condition} $\sigma = (Z, \sli)$ on $\D$ consists of
a group homomorphism $Z \colon K(\D) \to \CC$ called the {\it central charge} and
a family of full additive subcategories $\sli (\phi) \subset \D$ for $\phi \in \R$
called the {\it slicing}
satisfying the following conditions:
\begin{itemize}
\item[(a)]
if  $0 \neq E \in \sli(\phi)$,
then $Z(E) = m(E) \exp(\bi \pi \phi)$ for some $m(E) \in \R_{>0}$,
\item[(b)]
for all $\phi \in \R$, $\sli(\phi + 1) = \sli(\phi)[1]$,
\item[(c)]if $\phi_1 > \phi_2$ and $A_i \in \sli(\phi_i)\,(i =1,2)$,
then $\Hom_{\D}(A_1,A_2) = 0$,
\item[(d)]for $0 \neq E \in \D$, there is a finite sequence of real numbers
\begin{equation}\label{eq:>}
\phi_1 > \phi_2 > \cdots > \phi_m
\end{equation}
and a collection of exact triangles (\emph{Harder-Narasimhan filtration})
\begin{equation}\label{eq:HN}
0 =
\xymatrix @C=5mm{
 E_0 \ar[rr]   &&  E_1 \ar[dl] \ar[rr] && E_2 \ar[dl]
 \ar[r] & \dots  \ar[r] & E_{m-1} \ar[rr] && E_m \ar[dl] \\
& A_1 \ar@{-->}[ul] && A_2 \ar@{-->}[ul] &&&& A_m \ar@{-->}[ul]
}
= E
\end{equation}
with \emph{HN-factors} $A_i \in \sli(\phi_i)$ for all $i$.
\end{itemize}
\end{definition}
For each non-zero object $0 \neq E \in \D$, we define two real numbers by
$\phi_{\sigma}^+(E):=\phi_1$ and $\phi_{\sigma}^-(E):=\phi_m$ where $\phi_1$ and
$\phi_m$ are determined by the axiom (d).
Nonzero objects in $\sli(\phi)$ are called {\it semistable of phase $\phi$} and simple objects
in $\sli(\phi)$ are called {\it stable of phase $\phi$}.
For a non-zero object $E \in \D$ with extension factors $A_1,\dots,A_m$
given by axiom (d), define the mass of $E$ by
\[
m_{\sigma}(E):=\sum_{i=1}^m |Z(A_i)|.
\]
Following \cite{KoSo}, we assume an additional condition, called the {\it support property}.
For a stability condition $\sigma = (Z,\sli)$,
we introduce the set of semistable classes
$\ss(\sigma) \subset K(\D)$ by
\begin{equation}\label{eq:ss}
\ss(\sigma) :=\{\,\alpha \in
K(\D)\,\vert\,\text{there is a semistable object }
E \in \D \text{ such that } [E] = \alpha\,\}.
\end{equation}
Let $\norm{\,\cdot\,}$ be some norm on $K(\D) \otimes \R$.
A stability condition $\sigma=(Z,\sli)$
satisfies the support property if there is a some constant $C=C_\sigma >0$ such that
\begin{equation}\label{eq:supp}
C \cdot |{Z(\alpha)}| > \norm{\alpha}
\end{equation}
for all $\alpha \in \ss(\sigma)$.
Let $\Stab\D$ be the set of all stability conditions on $\D$ satisfying
the support property. We define the distance
\begin{gather}\label{eq:d}
    d \colon \Stab\D\times \Stab\D\to [0,\infty]
\end{gather}
by
\begin{equation}
\label{eq:distance}
d(\sigma,\varsigma):= \sup_{0 \neq E \in \D}\left\{\,
|\phi_{\sigma}^-(E) - \phi_{\varsigma}^-(E)|\,,\,
|\phi_{\sigma}^+(E) - \phi_{\varsigma}^+(E)|\,,\,
\left|\log \frac{m_{\sigma}(E)}{m_{\varsigma}(E)}  \right|
\right\}.
\end{equation}
Under the topology induced by the distance $d$ on $\Stab\D$,
Bridgeland \cite{B1} showed
the following crucial theorem.
\begin{theorem}[\cite{B1}, Theorem 1.2]
\label{thm:localiso}
The projection map of taking central charges
\begin{gather*}\begin{array}{ccc}
\mathcal{Z}\colon \Stab\D &\longrightarrow& \Hom(K(\D),\CC),\\
    \quad(Z,\hh{P})& \mapsto& Z
\end{array}\end{gather*}
is a local homeomorphism of topological spaces. In particular,
$\mathcal{Z}$ induces a complex structure on $\Stab\D$.
\end{theorem}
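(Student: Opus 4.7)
The plan is to split the proof into the standard two components: continuity of $\mathcal{Z}$, which follows directly from the distance \eqref{eq:distance}, and the construction of a local inverse of $\mathcal{Z}$, which is the substantive point and uses the support property \eqref{eq:supp}. For continuity, note that for every nonzero $E \in \D$ the central charge of $[E]$ is reconstructible from the masses and phases of the HN factors of $E$, so a small $d(\sigma,\tau)$ forces $|\mathcal{Z}(\sigma)([E]) - \mathcal{Z}(\tau)([E])|$ to be small; applied to a basis of $K(\D)$ this yields continuity of $\mathcal{Z}$ in any fixed norm on $\Hom(K(\D), \CC)$. For local injectivity, if $\sigma = (Z, \sli)$ and $\tau = (Z, \mathcal{Q})$ share the same central charge and are within distance less than $1/4$, then any nonzero $E \in \sli(\phi)$ has a $\tau$-HN filtration with factors $A_j \in \mathcal{Q}(\psi_j)$ and $|\psi_j - \phi| < 1/4$; combined with axiom (a) and the identity $\sum Z(A_j) = Z(E) = m(E) e^{\bi \pi \phi}$ this forces $\psi_j = \phi$ for all $j$, hence $\sli(\phi) \subseteq \mathcal{Q}(\phi)$, and symmetry gives $\sigma = \tau$.

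The heart of the argument is the deformation lemma: given $\sigma = (Z, \sli) \in \Stab \D$ satisfying \eqref{eq:supp} with constant $C$, I want to produce $\epsilon > 0$ such that every $W \in \Hom(K(\D), \CC)$ with $\norm{W - Z} < \epsilon$ is the central charge of a unique stability condition $\tau = (W, \mathcal{Q})$ close to $\sigma$. To construct $\mathcal{Q}$, fix a small parameter $\eta \in (0, 1/8)$ and for each $t \in \RR$ consider the extension-closed subcategory $\sli((t - \eta, t + \eta)) \subset \D$ generated by the $\sigma$-semistables with phases in $(t - \eta, t + \eta)$. This subcategory is quasi-abelian, and the rotated central charge $e^{-\bi \pi t}Z$ takes values in a strict cone inside the open upper half-plane on all its semistable classes. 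Using \eqref{eq:supp}, a nonzero semistable class $\alpha$ in this subcategory satisfies $|Z(\alpha)| > \norm{\alpha}/C$, hence $|W(\alpha) - Z(\alpha)|/|Z(\alpha)| < C \norm{W - Z}$; choosing $\epsilon$ small compared to $\sin \pi \eta$ and $1/C$ ensures that $e^{-\bi \pi t}W$ likewise takes values in the open upper half-plane. Thus $W$ restricts to a stability function on $\sli((t - \eta, t + \eta))$, and refining the $\sigma$-slicing by this stability function produces a $W$-compatible finer slicing. These local refinements for varying $t$ are compatible and glue to a global slicing $\mathcal{Q}$ with $\mathcal{Z}(\tau) = W$; uniqueness of $\tau$ close to $\sigma$ follows from the injectivity argument above.

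The main technical obstacle is establishing the HN property and the support property for $(W, \mathcal{Q})$. Existence of HN filtrations inside the quasi-abelian subcategory $\sli((t-\eta, t+\eta))$ with respect to the stability function $W$ requires verifying Bridgeland's chain conditions on strict subobjects and quotients; both follow from the support property for $\sigma$, which bounds the number of semistable classes lying in any bounded region of $K(\D) \otimes \RR$, together with the closeness of $W$ to $Z$. The support property for $(W, \mathcal{Q})$ in turn propagates from that for $\sigma$: any $(W, \mathcal{Q})$-semistable class arises as a refinement of a $(Z, \sli)$-semistable class, and the bound \eqref{eq:supp} transfers with a slightly enlarged constant absorbing $\norm{W - Z}$. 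Once these ingredients are in place, the local inverse $W \mapsto (W, \mathcal{Q})$ is continuous by construction, which combined with continuity of $\mathcal{Z}$ shows that $\mathcal{Z}$ is a local homeomorphism; pulling back the complex structure on $\Hom(K(\D), \CC)$ through local sections endows $\Stab \D$ with the claimed complex structure.
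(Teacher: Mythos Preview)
The paper does not supply a proof of this theorem: it is quoted as background from Bridgeland's foundational paper \cite{B1} and is not reproved here. So there is no ``paper's own proof'' to compare against; your outline is, in structure, a sketch of Bridgeland's original argument in \cite[\S6--8]{B1}, and as such is the right approach.

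That said, two steps in your sketch are not fully justified. In the local injectivity argument, the claim that $\sum_j m(A_j)e^{\bi\pi\psi_j}=m(E)e^{\bi\pi\phi}$ with $|\psi_j-\phi|<1/4$ forces $\psi_j=\phi$ for all $j$ is false as stated: positive combinations of vectors in a narrow cone can land on the cone's axis without each vector lying on it. Bridgeland's actual proof (\cite[Lemma~6.4]{B1}) argues instead by comparing $\phi^{\pm}_{\sli}$ and $\phi^{\pm}_{\mathcal{Q}}$ on the extremal HN factors, not via a central-charge identity. Second, in transferring the support property to $(W,\mathcal{Q})$ you assert that every $\mathcal{Q}$-semistable class ``arises as a refinement of'' a $\sli$-semistable class; this is not literally true, since a $\mathcal{Q}$-semistable object need not be $\sli$-semistable. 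The correct argument writes the class of a $\mathcal{Q}$-semistable object as a sum of $\sli$-semistable classes of nearby phase and bounds the norm via the triangle inequality together with the mass comparison encoded in \eqref{eq:distance}. Both points are repairable and do not affect the overall strategy.
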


Here we consider the case when the Grothendieck group $K(\D)$ is of
finite rank over $\ZZ$. However in \cite{B1},
he also deals with the case when $K(\D)$ is of infinite rank. In that case, one should consider the set of all {\it locally-finite}
stability conditions $\Stab\D$.

\begin{definition}\cite[Definition 5.7]{B1}
A slicing $\sli$ of a triangulated category $\D$ is locally-finite if there
exists a real number $\epsilon>0$ such that for all $\phi\in\RR$ the quasi-abelian category
$\sli(\phi-\epsilon,\phi+\epsilon)$ is of finite length.
A stability condition is locally-finite if the corresponding slicing is.
\end{definition}

On the space of all stability conditions $\Stab\D$,
we can define two group actions commuting with each other.
The first one is the natural $\CC$ action
\[
    s \cdot (Z,\hh{P})=(Z \cdot e^{-\mathbf{i} \pi s},\hh{P}_{-\Re(s)}),
\]
where $\hh{P}_x(\phi)=\hh{P}(\phi+x)$.
There is also a natural action on $\Stab\D$ induced by $\Aut\D$, namely:
$$\Phi  (Z,\hh{P})=\big(Z \circ \Phi^{-1}, \Phi (\hh{P}) \big).$$

\subsection{$q$-stability conditions}
\begin{con}
Let $\D_{\XX}$ be a triangulated category with a distinguished auto-equivalence
$$\XX \colon \D_{\XX} \to \D_{\XX}.$$
Here $\XX$ is not necessarily the Serre functor.
We will write $E[l \XX]$ instead of $\XX^l(E)$ for
$l \in \ZZ $ and $E \in \D_{\XX}$.
Set
\begin{gather}\label{eq:R}
    \Zq=\ZZ[q^{\pm 1}]
\end{gather}
and define the $\Zq$-action on $K(\D_{\XX})$ by
\begin{gather}\label{eq:R-structure}
    q^l \cdot [E] := [E[l \XX]].
\end{gather}
Then $K(\D_{\XX})$ has an $\Zq$-module structure.
Moreover, when we consider such a category $\D_\XX$,
the auto-equivalence group $\Aut\D_\XX$ will only consists of
those that commute with $\XX$.
\end{con}

\begin{definition}\label{def:Xstab}
A {\it $q$-stability condition} $(\sigma,s)$
consists of a (Bridgeland) stability condition
$\sigma=(Z,\sli)$ on $\D_\XX$ and a complex number $s \in \CC$
satisfying
\begin{equation}
\label{eq:X=s}
    \XX ( \sigma)=s \cdot \sigma.
\end{equation}
We may write $\sigma[\XX]$ for $\XX(\sigma)$.
\end{definition}

\begin{remark}
Equation \eqref{eq:X=s} has been considered by Toda \cite{To}
to study the orbit point, known as the Gepner point, of the orbitfold $\CC\backslash\Stab\D/\Aut$.
We will impose the following condition (Assumption~\ref{assumption:R}) on our triangulated category $\D_\XX$,
which means $\Stab\D_\XX$ is infinite dimensional.
And the equation \eqref{eq:X=s} reduces the dimension of the stability spaces.
See \cite[\S~1.2]{Q3} for further discussion.
\end{remark}

In the rest of this paper, we assume following.
\begin{assumption}
\label{assumption:R}
The Grothendieck group $\Grot (\D_{\XX})$ is free of finite rank
over $\Zq$, i.e. $\Grot(\D_{\XX} )\cong \Zq^{\oplus n}$ for some $n $.
We will call such a category $\D_\XX$ an $\XX$-category.
\end{assumption}

For a fixed complex number $s \in \CC$, consider the specialization
\[
    q_s\colon\CC[q,q^{-1}]\to\CC,\quad q\mapsto e^{\bi \pi s}.
\]
Denote by $\CC_s$ the complex numbers with the $\Zq$-module structure through the specialization $q_s$.
To spell out the conditions for the equation \eqref{eq:X=s}, we have the following
equivalent conditions of \eqref{eq:X=s}.

\begin{definition}\label{def:xstab}
A $q$-stability condition $(\sigma,s)$ consists of
a (Bridgeland) stability condition $\sigma=(Z,\sli)$ on $\D_\XX$ and
a complex number $s \in \CC$
satisfying the following two more conditions:
\begin{itemize}
\item[(e)]
the slicing satisfies $\sli(\phi + \Re( s)) = \sli(\phi)[\XX]$
for all $\phi \in \R$,
\item[(f)]
the central charge $Z\colon K(\D_{\XX}) \to \CC_s$ is $\Zq$-linear;
\[
Z \in \Hom_\Zq(K(\D_{\XX}),\CC_s).
\]
\end{itemize}
\end{definition}

Similar to usual stability conditions, we consider the support property as follows.

\begin{definition}\label{def:support}
A $q$-stability condition $(\sigma,s)$ satisfies the {\it $q$-support property} if
there is some lattice $\Gamma:=\ZZ^n \subset K(\D_{\XX})$ satisfying
$\Gamma \otimes_{\ZZ}\Zq \cong K(\D_{\XX})$ and a subset
\begin{gather}\label{eq:sshat}
    \widehat{\ss}(\sigma) \subset \{ \alpha \in K(\D_{\XX}) \,|\,
        \alpha= \sum_{j=0}^l q^j  \alpha_j \, (\alpha_j \in \Gamma) \}
\end{gather}
such that
\begin{enumerate}
\item
the set of semistable classes $\ss(\sigma)$ is given by
\[
   \ss(\sigma)=\bigcup_{k \in \ZZ}q^k \cdot \widehat{\ss}(\sigma).
\]
\item
for some norm $\norm{\,\cdot\,}$ on a finite dimensional vector space $\Gamma \otimes_{\ZZ}\R$,
there is some constant $C>0$ such that \eqref{eq:supp} holds
for all $\alpha \in  \widehat{\ss}(\sigma)$ (and hence all $\alpha\in\ss(\sigma)$),
\Note{where $$\norm{ \sum_{j=0}^l q^j  \alpha_j }\colon= \sum_{j=0}^l |e^{ \bi \pi j s}|\cdot
      \norm{\alpha_j}. $$}
\item $\XX$-$\Hom$-bounded: for any semistable object $E$ with $[E]\in\Gamma$, exists $N_0$ such that for any stable object $F$ with $[F]\in\Gamma$,
\begin{equation}\label{eq:Hom=0}
\Hom(E,F[k\XX])=0=\Hom(F,E[k\XX])
\end{equation}
when $|k|>N_0$.
\end{enumerate}
\end{definition}



\begin{remark}\label{rem:changeB}(Change of Basis)
Given any other lattice $\Gamma'\subset K(\D_{\XX})$ satisfying $\Gamma \otimes_{\ZZ}\Zq \cong K(\D_{\XX})$,
then there is another subset
\[
    \widehat{\ss'}(\sigma) \subset \{ \alpha \in K(\D_{\XX}) \,|\,
    \alpha= \sum_{j=0}^l q^j  \alpha_j \, (\alpha_j \in \Gamma') \}
\]
such that Condition~$2^\circ$ and $3^\circ$ holds for $\widehat{\ss'}(\sigma)$ and $\Gamma'$ respectively.
More precisely,
\begin{itemize}
\item
The second condition holds for all $\alpha\in\ss(\sigma)$ and will effect neither by
the choices of $\Gamma'$ nor by $\widehat{ss}'(\sigma)$.
\item $\XX$-$\Hom$-boundedness holds since the Hom-vanishing property preserves under (iterated) extension.
\end{itemize}
In other words, the $q$-support property is intrinsic
(that does not depend on the choice of the lattice $\Gamma$).
\end{remark}

Denote by $\QStab_s\D_\XX$ the set of all $q$-stability conditions
satisfying the $q$-support property and with fixed $s$.
Since the space $\QStab_s\D_\XX$ is
a subset of the space of usual stability conditions
$\Stab(\D_{\XX})$ on $\D_{\XX}$,
the distance $d$ (\ref{eq:distance}) on $\Stab\D$
induces a topology on $\QStab_s\D_\XX$.
We proceed to show the analogue result of Theorem \ref{thm:localiso}.

\begin{theorem}\label{thm:localiso2}
The projection map of taking central charges
\begin{gather}\label{eq:Zs}\begin{array}{ccc}
\mathcal{Z}_s \colon \QStab_s\D_{\XX} &\longrightarrow& \Hom_\Zq(K(\D_{\XX}),\CC_s),\\
    \quad((Z,\hh{P}),s)& \mapsto& Z
\end{array}\end{gather}
is a local homeomorphism of topological spaces. In particular,
$\mathcal{Z}_s$ induces a complex structure on $\QStab_s\D_{\XX}$.
\end{theorem}
Next subsection is devoted to the proof of this theorem.
\subsection{Proof of the deformation theorem}
We divide the proof into five steps,
recall that we fix the complex number $s$.
The outline is that we will first prove the locally finiteness
and adapt Bridgeland's deformation strategy to get a stability conditions,
which we need to show that it is a $q$-stability condition
satisfies/preserves the corresponding properties.

\subsubsection*{\bf{Step I}}The $q$-support property implies locally-finiteness.
\begin{lemma}
If a $q$-stability condition $(\sigma,s)$ satisfies the $q$-support property
and $s\ne0$, then it is locally-finite.
\end{lemma}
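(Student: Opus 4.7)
The strategy is to adapt Bridgeland's argument that the support property forces locally-finiteness \cite{B1} to the $q$-setting, using the hypothesis $s\ne 0$ to control the extra $q$-direction in $\Grot(\D_\XX)\cong\Gamma\otimes_\ZZ R$. Choose $\epsilon\in(0,1/2)$ with the additional constraint $2\epsilon<|\Re(s)|$ when $\Re(s)\ne 0$. Fix $\phi\in\RR$ and $E\in\sli(\phi-\epsilon,\phi+\epsilon)$; the goal is to show that only finitely many strict simple quasi-abelian subquotients of $E$ occur in the slice, which yields the required finite length.

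For any semistable $A$ appearing as such a subquotient, $[A]\in\ss(\sigma)$, and by part (1) of Definition~\ref{def:support} we may write $[A]=q^{k(A)}\beta(A)$ with $\beta(A)\in\widehat{\ss}(\sigma)$. The key step is to bound $k(A)$. When $\Re(s)\ne 0$, axiom (e), $\sli(\phi+\Re(s))=\sli(\phi)[\XX]$, combined with $2\epsilon<|\Re(s)|$, implies that for fixed $\beta$ at most one value of $k$ yields a semistable object with class $q^k\beta$ in the narrow slice, so $k(A)$ is determined by the phase of $A$ up to a bounded shift. When $\Re(s)=0$ we have $\Im(s)\ne 0$, and the identity
\[
    |Z(A)|=|e^{\mathbf{i}\pi s}|^{k(A)}|Z(\beta(A))|=e^{-\pi\Im(s)\,k(A)}|Z(\beta(A))|,
\]
together with the upper bound $|Z(A)|\le m_\sigma(E)$ and the support-property lower bound $|Z(\beta(A))|\ge C^{-1}\norm{\beta(A)}\ge C^{-1}\delta$, for $\delta>0$ the minimum nonzero norm on the relevant lattice, again forces $k(A)$ into a finite interval.

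With $k(A)$ controlled, part (2) of Definition~\ref{def:support} yields $\norm{\beta(A)}<C\,|Z(\beta(A))|\le C'\,m_\sigma(E)$, placing $\beta(A)$ in a bounded norm ball intersected with a finite-rank sublattice of $\Gamma\otimes_\ZZ\ZZ[q]$, which is a finite set. Consequently only finitely many classes $[A]$ arise, each with mass bounded below by a positive constant, so the total mass $m_\sigma(E)$ caps the number of terms in any filtration and produces a finite strict composition series.

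The principal technical hurdle is to interpret the support-property norm, nominally defined on $\Gamma\otimes_\ZZ\RR$, as a norm on the polynomial classes in $\widehat{\ss}(\sigma)\subset\Gamma\otimes_\ZZ\ZZ[q]$, and to ensure the sublattice of bounded $q$-degree into which $\beta(A)$ is confined has finite rank uniformly across the slice. Part (3) of the $q$-support property ($\XX$-Hom boundedness) is precisely what provides the uniform bound on the $q$-degrees that can appear, closing this last point.
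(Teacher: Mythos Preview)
Your argument for bounding $k(A)$ in the case $\Re(s)\ne 0$ does not work. The claim ``for fixed $\beta$ at most one value of $k$ yields a semistable object with class $q^k\beta$ in the narrow slice'' is false: if $M$ is semistable of phase $\psi$ with $[M]=\beta$, then for any $k$ the object $M[k\XX][2m]$ is semistable of phase $\psi+k\Re(s)+2m$ with class $q^k\beta$, and one can choose $m$ so that this phase lands in $(\phi-\epsilon,\phi+\epsilon)$ whenever $k\Re(s)$ is close to an even integer modulo $2$. For instance if $\Re(s)=2$ every $k$ works; for irrational $\Re(s)$ infinitely many do. So phases alone cannot pin down $k(A)$. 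Your mass argument in the case $\Re(s)=0$ has a similar defect: the inequality $e^{-\pi\Im(s)k(A)}|Z(\beta(A))|\le m_\sigma(E)$ with $|Z(\beta(A))|$ bounded below only bounds $k(A)$ on one side, and you have neither a lower bound for $|Z(A)|$ nor an upper bound for $|Z(\beta(A))|$ to close the other side.

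The paper bounds $k$ by a different mechanism that your sketch misses: it works with \emph{subobjects} $M\subset E$ rather than general subquotients, so that $\Hom(M,E)\ne 0$ automatically. Writing $[M]=q^k\widehat\alpha$ and $[E]=q^{-l}\widehat\gamma$ with $\widehat\alpha,\widehat\gamma\in\widehat{\ss}(\sigma)$, the non-vanishing $\Hom(M,E)=\Hom(M[-k\XX],E[l\XX][-(k+l)\XX])$ forces $|k+l|\le N_0$ by condition~$3^\circ$, which is exactly the two-sided bound on $k$ you need. Your invocation of condition~$3^\circ$ at the end, to control the $q$-degree of $\beta(A)$, is not how the $\XX$-Hom bound is meant to be used: it is a statement about Hom-vanishing between objects, not about polynomial degrees of classes, and you have not produced any non-vanishing Hom involving $A$ to feed into it. Reworking the argument to track subobjects (so that $\Hom(-,E)\ne 0$ is available) and using condition~$3^\circ$ to bound $k$ directly, as the paper does, repairs the gap.
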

\begin{proof}
Suppose not, then without loss of generality
we can assume that there is an infinite chain of subobjects
$\cdots \subset E_m\subset\cdots \subset E_1=E$
in the quasi-abelian category $\sli(\phi-\epsilon,\phi+\epsilon)$
for some $\phi$ and $\epsilon<1/2$.
As in the proof of \cite[Lemma.~4.4]{B2},
the norms $|Z(E_m)|$ of central charges are bounded, say by $K>0$.
Thus the following set
\[\begin{array}{rl}
    \{ \,\alp \in \ss(\sigma) \mid& |Z(\alp)| <K,
    \;\exists M \in \sli(\phi-\epsilon,\phi+\epsilon)
    \text{ s.t.}\\ & [M]=\alp,\;\Hom(M,E)\ne0     \},
\end{array}\]
denoted by $\Lambda_{\sigma}$, is infinite.
Taking their HN-factors if necessary,
we may assume that both $E$ and $M$ are semisimple when considering $\Lambda_{\sigma}$.

By Condition~$1^\circ$ of $q$-support property,
any $\alpha\in\Lambda_{\sigma}$ equals $q^k\cdot\widehat{\alp}$
for some $\widehat{\alp}\in\widehat{\ss}$.
Thus $\Lambda_{\sigma}=\bigcup_{k} \Lambda_k$, where $\Lambda_k$ is
\[\begin{array}{rl}
    \{ \,\widehat{\alp} \in \widehat{\ss}(\sigma)\mid&
    |e^{k\mathbf{i}\pi s}\cdot Z(\widehat{\alp})| <K,
    \;\exists M \in \sli(\phi-\epsilon,\phi+\epsilon)\text{ s.t.}\\
    & [M]=q^k\widehat{\alp},\;\Hom(M,E)\ne0  \}.
\end{array}\]
Note that $[E[l\XX]]\in\widehat{\ss}(\sigma)$ for some $l\in\ZZ$.
Then by Condition~$3^\circ$ (i.e. $\XX\Hom$-bounded) of $q$-support property,
only finitely many $\Lambda_k$ are not empty.
Therefore, there exists $k_0\in\ZZ$ such that $\Lambda_{k_0}$ is infinite.

On the other hand,
by support property \eqref{eq:supp},
\[
    \{ \,\widehat{\alp} \in \widehat{\ss}(\sigma)\,\mid\, \,    |Z(\widehat{\alp})| <K'\,  \}
\]
is finite for any $K'>0$.
Taking $K'=|e^{-k\mathbf{i}\pi s}| \cdot K$, then $\Lambda_k$ is finite for any $k$,
which is a contradiction.
\end{proof}

\subsubsection*{\bf{Step II}}
Adopting the proof of Bridgeland \cite[Theorem 1.2]{B1},
we obtain the following deformation statement as follows.

\begin{corollary}\label{cor:B}
Let $(\sigma,s)$ be a $q$-stability condition satisfying $q$-support property,
where $\sigma=(Z,\hh{P})$.
Then there exists $0<\epsilon<1/8$, such that for any $W\in\Hom_\Zq(K(\D_\XX),\CC_s)$
satisfying that
\begin{gather}\label{eq:W-Z}
    | W(E)-Z(E) | < \sin(\epsilon\pi)|Z(E)|
\end{gather}
holds for any $\sigma$-stable object $E$, there exists a slicing $\hh{Q}$
so that $\varsigma=(W,\hh{Q})$ forms a stability conditions on $\D_\XX$ with
$d(\sigma,\varsigma)<\epsilon$.
Here the distance $d$ is defined as in \eqref{eq:d}.
\end{corollary}

For the later proof, let us sketch the construction of the slicing $\hh{Q}$
(from \cite[Theorem 1.2]{B1}),
which is the key of the proof of the corollary above.
Fix $\epsilon$ so that any (quasi-abelian) subcategory $\hh{P}(t-\eta,t+\eta)$ is of finite length
(i.e. both artinian and noetherian) for any $\eta\le\epsilon$.
Then $\hh{Q}(\psi)$ is the full additive subcategories of $\D_\XX$ consisting of
the zero objects together with those object $E$, which is
$W$-semistable with phase $\psi$ in the subcategory
\begin{gather}\label{eq:hhP}
    \hh{P}( \psi-\epsilon,\psi+\epsilon ).
\end{gather}
Note that $E$ is in fact $W$-semistable in any subcategory $\hh{P}(I)$
if the interval $I$ contains $( \psi-\epsilon,\psi+\epsilon )$.

\subsubsection*{\bf{Step III}}
Now we prove that the deformed stability condition $\varsigma$ in Corollary~\ref{cor:B}
is a $q$-stability condition (together with $s$).

\begin{lemma}
The slicing $\hh{Q}$ satisfies Condition~(e) in Definition~\ref{def:xstab},
thus $(\varsigma,s)$ is a $q$-stability condition.
\end{lemma}
\begin{proof}
We only need to prove that, if $E$ is in $W(\psi)$, then $E[\XX]$ is $W(\psi+\Re(s))$.
By construction, $E$ is $W$-semistable with phase $\psi$ in $\hh{P}( \psi-\epsilon,\psi+\epsilon )$.
Now suppose that
$E[\XX]$ is not $W$-semistable in $\hh{P}( \psi+\Re(s)-\epsilon,\psi+\Re(s)+\epsilon )$.
Then there is a short exact sequence
\[
    0\to A\to E[\XX]\to B\to 0
\]
in $\hh{P}( \psi+\Re(s)-\epsilon,\psi+\Re(s)+\epsilon )$
such that $\phi_W(A)>\phi_W(E[\XX])>\phi_W(B)$.
Since $\hh{P}$ satisfies Condition~(e) in Definition~\ref{def:xstab},
here is a short exact sequence
\[
    0\to A[-\XX]\to E\to B[-\XX]\to 0
\]
in $\hh{P}( \psi-\epsilon,\psi+\epsilon )$
such that $\phi_W(A[-\XX])>\phi_W(E)>\phi_W(B[-\XX])$.
This contradicts to the fact that $E$ is $W$-semistable in $\hh{P}( \psi-\epsilon,\psi+\epsilon )$,
that finishes the proof.
\end{proof}

\subsubsection*{\bf{Step IV}}Next we check the $q$-support property is preserved under deformation.
\begin{lemma}
The $q$-stability condition $(\varsigma,s)$ in Corollary~\ref{cor:B} satisfies the $q$-support property.
\end{lemma}
\begin{proof}
Let $\widehat{\ss}(\varsigma)$ be any set in the form of \eqref{eq:sshat}
satisfying the first condition in \eqref{def:support}, with respect to the lattice $\Gamma$.
This can be done by choosing one representative $q^{k_0}\alpha$ in each subset
$\{ q^k\alpha \mid k\in\ZZ \}\subset\ss(\varsigma)$, where $k_0$ is bigger enough.

Now let $\alpha=[E]\in\widehat{\ss}(\varsigma)$ with $\varsigma$-semistable $E$.
Consider the Harder-Narasimhan filtration \eqref{eq:HN} of $E$ with respect to $\sigma$.
So $[A_i]\in\ss(\sigma)$ and
\[
    \alpha=[E]=\sum_{i}^m [A_i].
\]
By \eqref{eq:W-Z}, we have
\[
    |W(\alpha)|>(1-\sin(\epsilon\pi))|W(\alpha)|.
\]
Moreover, since $E$ is in $\hh{P}( \psi-\epsilon,\psi+\epsilon )$ in \eqref{eq:hhP},
the phases of $A_i$ and $E$, w.r.t. $\sigma$ (or $\hh{P}$), is within an open interval of length $2\epsilon$.
Thus
\[
    |Z(E)|=| \sum_{i}^m Z(A_i) |> \sum_{i}^m \cos(2\epsilon) | Z(A_i) |
\]
Combining the calculations above and the fact that $[A_i]$ satisfies \eqref{eq:supp}
(as $\sigma$ satisfies support property), we have
\[\begin{array}{rl}
|W(\alpha)|&>(1-\sin(\epsilon\pi))|Z(\alpha)|\vspace{1ex}\\
&>(1-\sin(\epsilon\pi)) \cdot \cos(2\epsilon)  \displaystyle\sum_{i}^m| Z(A_i) |\\
&>(1-\sin(\epsilon\pi)) \cdot  \cos(2\epsilon) \cdot  C_\sigma^{-1}\displaystyle\sum_{i}^m| \norm{[A_i]}\\
&>(1-\sin(\epsilon\pi)) \cdot  \cos(2\epsilon) \cdot  C_\sigma^{-1}  \norm{\displaystyle\sum_{i}^m| [A_i]}\vspace{1ex}\\
&>(1-\sin(\epsilon\pi)) \cdot  \cos(2\epsilon) \cdot  C_\sigma^{-1}  \norm{\alpha}.
\end{array}\]
Thus, we can take $C_\varsigma=(1-\sin(\epsilon\pi))^{-1} \cdot  \cos(2\epsilon)^{-1} \cdot C_\sigma$ so that
$\varsigma$ satisfies the second condition in Definition~\ref{def:support}.

Finally, the third condition holds as explained in Remark~\ref{rem:changeB}.
\end{proof}

This completes the proof.
\subsection{Gluing $q$-stability conditions}
By Assumption~\ref{assumption:R}, $\QStab_s\D_\XX$ is of (complex) dimension $n$.

With respect to the original topology of Bridgeland,
$\QStab_s\D_\XX$ are in different connected components of $\Stab\D_\XX$
for different $s\in\CC$.

\begin{lemma}
Let $(\sigma_i,s_i)$ be $q$-stability conditions with $s_1\neq s_2$.
Then they are in different connected components of $\Stab_s\D_\XX$.
\end{lemma}
\begin{proof}
Recall the distance $d$ on $\Stab\D_\XX$ is defined in \eqref{eq:distance}.
We have
\[
    \phi_{\sigma_1}^\pm(E[k\XX]) - \phi_{\sigma_2}^\pm(E[k\XX])=
    \phi_{\sigma_1}^\pm(E) - \phi_{\sigma_2}^\pm(E)+k(\Re(s_1)-\Re(s_2))
\]
for any $k\in\ZZ$ by (e) of Definition~\ref{def:xstab}.
Thus if $\Re(s_1)\ne\Re(s_2)$, then we have
\[
    d(\sigma_1,\sigma_2)\ge
    \sup_{k\in\ZZ} \left\{\,
    \left|
        \phi_{\sigma_1}^\pm(E) - \phi_{\sigma_2}^\pm(E)+k(\Re(s_1)-\Re(s_2))
    \right|\right\}=\infty.
\]
Similarly, we have
\[
    \log \frac{m_{\sigma_1}(E[k\XX])}{m_{\sigma_2}(E[k\XX])}
    =\log \frac{m_{\sigma_1}(E)}{m_{\sigma_2}(E)}+k(\Im(s_1)-\Im(s_2))
\]
for any $k\in\ZZ$ by (f) of Definition~\ref{def:xstab}.
Thus if $\Im(s_1)\ne\Im(s_2)$, then we have
\[
    d(\sigma_1,\sigma_2)\ge
    \sup_{k\in\ZZ} \left\{\,
    \left|
        \log \frac{m_{\sigma_1}(E)}{m_{\sigma_2}(E)}+k(\Im(s_1)-\Im(s_2))
    \right|\right\}=\infty.
\]
\end{proof}

So the question now is if/how we can gluing different connected components $\QStab_s\D_\XX$ in $\Stab\D_\XX$ together.
We hope that one can deform along the $s$ direction to reveal the relations between these complex manifolds $\QStab_s\D_\XX$.
Set
\[
    \QStab\D_\XX:=\bigcup_{s\in\CC}\QStab_s\D_\XX.
\]

\begin{conjecture}\label{conj:xstab}
$\QStab\D_\XX$ admits the structure of
a complex manifold of dimension $n+1$
and the projection map
\[
\pi \colon \QStab\D_\XX \to \CC,\quad (\sigma,s) \mapsto s
\]
is holomorphic.
\end{conjecture}
A partial answer to this conjecture is provided in Theorem~\ref{thm:manifold},
that an open subspace of $\QStab\D_\XX$ consisting of `induced' $q$-stability conditions
does glue together.
Moreover, such a subspace for type $A_2$ quiver is calculated in Section~\ref{sec:A2}.
These induced $q$-stability conditions will be identified with
multi-valued quadratic differentials in the surface case in the sequel \cite{IQ2}
(and hence are the most interesting ones as far as we are concerned).

\section{Reduction}
In this section, we show that under some conditions,
the space of $q$-stability conditions with $s \in \ZZ$ coincides
with the space of usual stability conditions (on a triangulated category
with finite rank Grothendieck group).
First we recall the notion of orbit categories.
Let $\D$ be a triangulated category with a functor 
$\Phi \colon \D \to \D$, the orbit category $\D \slash \Phi$ is
defined to be the category whose objects are the same as $\D$
and whose morphism spaces are given by
\[
\Hom_{\D \slash \Phi}(E,F):=\bigoplus_{k \in \ZZ}\Hom_{\D}(E,\Phi^k(F)).
\]
As in the previous section, let
$\D_{\XX}$ be a triangulated category
with a distinguished auto-equivalence
$\XX \colon \D_{\XX} \to \D_{\XX}$ satisfying
Assumption \ref{assumption:R}.

\begin{definition}
Let $N\ge1$ be an integer.
The orbit quotient $$\D_N=\D_{\XX} \sslash [\XX-N]$$ is defined to be the triangulated hull of the orbit category $\D_{\XX} \slash [\XX-N]$.
It is {\it $N$-reductive} if
\begin{itemize}
\item the quotient functor $\pi_N \colon  \D_{\XX} \to \D_N$ is exact,
\item the Grothendieck group of $\D_N$ is
free of finite rank, i.e. $K(\D_N) \cong \ZZ^{\oplus n}$ and the induced
$\Zq$-linear map
\[
[\pi_N] \colon K(\D_{\XX}) \to K(\D_N)
\]
is a surjection given by sending $q \mapsto (-1)^N$.
\end{itemize}
\end{definition}

In our motivating examples, $\D_\XX$ is constructed with dg-structures,
which provides a dg enhancement of $\D_{\XX} \sslash [\XX-N]$.

\begin{construction}
Assume that $\D_{\XX}$ is $N$-reductive and let $(\sigma,N)$ be an
$q$-stability condition on $\D_{\XX}$ with $\sigma=(Z,\sli)$.
We define a stability conditions $\sigma_N=(Z_N,\sli_N)$ on $\D_N$ as follows
\begin{itemize}
  \item $\sli_N(\phi):=\sli(\phi)$;
  \item By Condition (f) in Definition~\ref{def:xstab},
  $Z\colon K(\D_{\XX})\to\CC$ factors through $[\pi_N]$
  and thus we obtain a group homomorphism $Z_N\colon K(\D_N)\to\CC$
  satisfying $Z=Z_N\circ[\pi_N]$.
\end{itemize}
\end{construction}

\begin{lemma}\label{lem:above}
If $\D_N$ is $N$-reductive, then
$\sigma_N$ is a stability condition in $\Stab\D_N$.
\end{lemma}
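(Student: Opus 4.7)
The strategy is to verify the four axioms of Definition~\ref{def:stab} together with the support property for $\sigma_N$ on $\D_N$, pushing forward all data along the exact quotient $\pi_N\colon\D_\XX\to\D_N$. The well-definedness of $Z_N$ is already built into the $N$-reductive assumption: since $[\pi_N]$ sends $q$ to $(-1)^N = e^{\bi\pi N}$, which is precisely the specialization $q_N$, condition (f) of Definition~\ref{def:xstab} forces $Z$ to factor as $Z_N\circ[\pi_N]$.

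Axioms (a) and (b) are essentially formal. For a nonzero object $E=\pi_N(M)\in\sli_N(\phi)$ with $M\in\sli(\phi)$, one has $Z_N([E])=Z([M])=m(M)\,e^{\bi\pi\phi}$, which gives (a). Since the shift $[1]$ on $\D_\XX$ descends to the shift on $\D_N$, the identity $\sli(\phi+1)=\sli(\phi)[1]$ passes immediately to $\sli_N(\phi+1)=\sli_N(\phi)[1]$, giving (b).

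The central step is axiom (c). I would invoke the orbit-category Hom formula
\[
\Hom_{\D_N}(\pi_N A_1,\pi_N A_2)=\bigoplus_{k\in\ZZ}\Hom_{\D_\XX}\bigl(A_1,A_2[k\XX-kN]\bigr)
\]
for $A_i\in\sli(\phi_i)$ with $\phi_1>\phi_2$. Condition (e) of Definition~\ref{def:xstab} with $s=N$, followed by shifting by $[-kN]$ via axiom (b) of $\sigma$, yields $A_2[k\XX-kN]\in\sli(\phi_2+kN-kN)=\sli(\phi_2)$. Since $\phi_1>\phi_2$, every summand vanishes by axiom (c) of $\sigma$, so $\Hom_{\D_N}(\pi_N A_1,\pi_N A_2)=0$.

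For axiom (d), I would first push forward Harder--Narasimhan filtrations from $\D_\XX$: given any lift $M$ of $E\in\pi_N(\D_\XX)$, the exactness of $\pi_N$ turns the HN filtration of $M$ into one of $E$ whose factors land in the $\sli_N(\phi_i)$ with phases still strictly decreasing. The $q$-support property of $(\sigma,N)$ transfers directly: $[\pi_N]$ sends $\widehat{\ss}(\sigma)$ onto $\ss(\sigma_N)$ (the $q$-orbit collapses to $(-1)^N$), converting condition $2^\circ$ of Definition~\ref{def:support} into the standard support-property estimate. The main obstacle I expect is extending axiom (d) from the essential image $\pi_N(\D_\XX)$ to the full triangulated hull $\D_N$; this amounts to showing that the subcategory of objects admitting HN filtrations is closed under cones, a standard propagation argument once (a)--(c) and local finiteness are in hand, the latter transferring from the $q$-support property just as in the previous lemma.
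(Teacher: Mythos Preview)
Your proposal is correct and follows essentially the same approach as the paper: verify the axioms of Definition~\ref{def:stab} by pushing everything along $\pi_N$. The paper's proof is extremely terse---it attributes (b) and well-definedness of $\sli_N$ to condition (e), and then simply asserts that exactness of $\pi_N$ yields (c) and (d)---so your version is in fact more detailed. In particular, your use of the orbit-category Hom formula together with condition (e) is the actual content behind axiom (c); the paper's bare appeal to exactness is imprecise on this point. Your concern about extending HN filtrations from the essential image of $\pi_N$ to the full triangulated hull $\D_N$ is legitimate and is not addressed in the paper's proof; the propagation-under-cones argument you sketch is a reasonable way to close that gap, and the paper appears to rely implicitly on the fact that in the motivating examples $\pi_N$ is essentially surjective.
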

\begin{proof}
Since $(\sigma,N)$ satisfies Condition (e) in Definition~\ref{def:xstab},
$\sli$ on $\D_N$ is well-defined and satisfies Condition (b) in Definition~\ref{def:stab}.
Since $\pi_N$ is exact, $\sli$ satisfies Condition (c), (d) in Definition~\ref{def:stab}.
and thus a slicing.
Thus, the lemma follows.
\end{proof}

For a $q$-stability condition $\sigma$, define a real number $L(\sigma)$ by
\[
L(\sigma):=\inf\left\{\frac{|Z(E)|}{\norm{[E]}} \,\middle|\,
[E] \in \widehat{\ss}(\sigma)  \ \right\},
\]
where $\norm{[E]}$ is the norm in Definition \ref{def:support}.
Then we note that
the support property is equivalent to the condition $L(\sigma)>0$.

\begin{proposition}\label{pp:above}
Let $\{\sigma_k\}_{k \ge 1}$ is a sequence of
$q$-stability conditions in $\QStab_s\D_{\XX}$ satisfying
\begin{itemize}
\item central charges $Z_k$ converges as $k \to \infty$ in
$\Hom_{R}(K(\D_{\XX}),\CC_s)$,
\item the slicing of $\sigma_k$ converges in the space $\operatorname{Slice}\D_{\XX}$ of slicings on $\D_{\XX}$,
where the metric on $\operatorname{Slice}\D_{\XX}$ is given by
\begin{equation}
\label{eq:d-slicing}
d(\hua{P},\hua{Q}):= \sup_{0 \neq E \in \D}\left\{
|\phi_{\hua{P}}^-(E) - \phi_{\hua{Q}}^-(E)|,
|\phi_{\hua{P}}^+(E) - \phi_{\hua{Q}}^+(E)|  \right\}.
\end{equation}
\item there is a uniform constant $L>0$ such that
\[
L(\sigma_k) \ge L
\]
for all $k $.
\end{itemize}
Then the sequence $\{\sigma_k \}_{k \ge 1}$ converges to some
stability condition $\sigma_{\infty} \in \QStab_s\D_{\XX}$ as $k \to \infty$.
\end{proposition}
\begin{proof}
We show that for any real small number $\epsilon >0$,
the limit
$Z_{\infty}:=\lim_{k\to \infty}Z_k$ satisfies
\[
|Z_{\infty}(E) - Z_k(E)|<\epsilon \cdot|Z_k(E)|
\]
for sufficiently large $k$ and
all $[E] \in \widehat{\ss}(\sigma_k)$. Then by Theorem~\ref{thm:localiso2},
there exists a unique stability condition $\sigma_{\infty}$ with the central charge $Z_{\infty}$
satisfying $d(\sigma_{\infty},\sigma_k)<\epsilon$ and hence as required.

Consider the operator norm $ \norm{\,\cdot\,}$ on
$\Hom_{R}(K(\D_{\XX}),\CC_s)$ defined by
\[
\norm{W}:=\sup_{0 \neq E \in \D_{\XX}} \frac{|W(E)|}{\norm{[E]}}.
\]
Then since $Z_k \to Z_{\infty}$ as $k \to \infty$,
we have
\[
\norm{Z_{\infty}-Z_k}<\epsilon \cdot L
\]
for sufficiently large $k$. This implies
\[
|Z_{\infty}(E)-Z_k(E)|<\epsilon  \cdot L\cdot \norm{[E]}
\]
for all $0 \neq E \in \D_{\XX}$. On the other hand,
from the condition $L(\sigma_k )\ge L$, we have
\[
|Z_k(E)| \ge L \cdot \norm{[E]}
\]
for all $[E] \in \widehat{\ss}(\sigma_k)$.
\end{proof}

Combining Lemma~\ref{lem:above} and Proposition~\ref{pp:above},
we have the following,
which is one of the motivations that we introduce $q$-stability conditions.

\begin{theorem}
\label{thm:reduction}
If $\D_{\XX}$ is an $N$-reductive, then
there is a canonical injection of complex manifolds
\[
    \iota_N \colon \QStab_N(\D_{\XX}) \to \Stab\D_N,
\]
whose image is open and closed.
\end{theorem}
\begin{proof}
We show the closedness of the image of $\iota_N$ and the other part is straightforward.
Take a  convergent sequence $\{\sigma_n\}_{n \ge 1}$ in $\Stab\D_N$
with the limit $\sigma_{\infty} \in \Stab\D_N$
and assume that $\sigma_n=\iota_{N}(\tilde{\sigma}_n)$
for $\tilde{\sigma}_n \in  \QStab_N(\D_{\XX})$. We show that there is some
$\tilde{\sigma}_{\infty} \in \QStab_N(\D_{\XX})$ such that $\iota_N(\tilde{\sigma}_{\infty})=\sigma_{\infty}$.
We check the sequence $\{\tilde{\sigma}_n\}_{n \ge 1}$ satisfies the conditions of Proposition \ref{pp:above}.

The first two conditions follows by direct checking as $\sigma_n$ and $\tilde{\sigma}_n$ as their central charges and slicings
are essentially the same.
Next we consider the third condition. Since
the sequence $\{L(\tilde{\sigma}_n)\}_{n \ge 1}$
converges to some positive number $L(\tilde{\sigma}_{\infty} )>0$, there is uniform constant $L>0$
such that $L(\tilde{\sigma}_n)>L$. Then the second condition
also holds since $L(\sigma_n)=L(\tilde{\sigma}_n)$ by definition.
\end{proof}

In the next section, we introduce a special type of $q$-stability conditions,
the induced $q$-stability conditions, which in many cases provide the existence of $q$-stability conditions.

\section{Induction}\label{sec:ind}
\subsection{$\xx$-baric hearts and induced pre $q$-stability conditions}\label{sec:QSC}
In this section, we introduce $\XX$-baric heart (a triangulated category)
in an $\XX$-category $\D_\XX$ as the $\XX$-analogue
of the usual heart (an abelian category) of a triangulated category.
Note that this is a special case of the baric structure
studied by Achar-Treumann \cite{AT} (see also \cite{FM}).

\begin{definition}
An \emph{$\XX$-baric heart} $\D_{\infty} \subset \D_{\XX}$
is a full triangulated subcategory of $\D_{\XX}$ satisfying the following conditions:
\begin{itemize}
\item[(1)]if $k_1 > k_2$ and $A_i \in \D_{\infty}[k_i\XX]\,(i =1,2)$,
then $\Hom_{\D_{\XX}}(A_1,A_2) = 0$,
\item[(2)]for $0 \neq E \in \D_{\XX}$,
there is a finite sequence of integers
\begin{equation*}
k_1 > k_2 > \cdots > k_m
\end{equation*}
and a collection of exact triangles
\begin{equation}\label{eq:X-HN}
0 =
\xymatrix @C=5mm{
 E_0 \ar[rr]   &&  E_1 \ar[dl] \ar[rr] && E_2 \ar[dl]
 \ar[r] & \dots  \ar[r] & E_{m-1} \ar[rr] && E_m \ar[dl] \\
& A_1 \ar@{-->}[ul] && A_2 \ar@{-->}[ul] &&&& A_m \ar@{-->}[ul]
}
= E
\end{equation}
with $A_i \in \D_{\infty}[k_i\XX]$ for all $i$.
\end{itemize}
\end{definition}

Note that by definition, classes of objects in $\D_{\infty}$
span $K(\D_{\XX})$ over $\Zq$ and we have a canonical isomorphism
\begin{gather}\label{eq:KKK}
    K(\D_{\infty}) \otimes_{\ZZ} \Zq \cong K(\D_{\XX}).
\end{gather}

The triangulated category $\D_\XX$ is Calabi-Yau-$\XX$
if $\XX$ is the Serre functor, i.e. there is a natural isomorphism:
\begin{gather}\label{eq:X}
    \XX:\Hom (X,Y)
        \xrightarrow{\sim}\Hom (Y,X[\XX])^\vee,
\end{gather}
where $V^{\vee}$ is the (graded, if $V$ is) dual space of $\bK $-vector space $V$.
For an $\XX$-baric heart $\D_{\infty}$ in a Calabi-Yau-$\XX$ category $\D_\XX$,
Condition $(1)$ can be refined as
\begin{gather}\label{eq:01}
    \Hom_{\D_{\XX}}(A_1,A_2) = 0,
\end{gather}
for $A_i\in\D_\infty[k_i\XX]$ and $k_1-k_2\notin\{0,1\}$.

Recall we have the specialization
\[
    q_s\colon\CC[q,q^{-1}]\to\CC,\quad q\mapsto e^{\mathbf{i} \pi s}.
\]

\begin{construction}\label{con:q}
Consider a triple $(\D_\infty,\ns,s)$ consists of
an $\XX$-baric heart $\D_\infty$, a (Bridgeland) stability condition $\ns=(\nz,\np)$ on $\D_\infty$
and a complex number $s$.
We construct

\begin{enumerate}
\item the additive pre-stability condition $\sadd=(Z,\padd)$ and
\item the extension pre-stability condition $\sext=(Z,\pext)$,
\end{enumerate}
where
\begin{itemize}
\item first extend $\nz$ to
\[
    Z_q\colon=\nz \otimes 1\colon K(\D_\XX)\to\CC[q,q^{-1}]
\]
via \eqref{eq:KKK} and
$$Z=q_s\circ Z_q\colon K(\D_\XX)\to\CC$$
gives a central charge function on $\D_\XX$;
\item the pre-slicing $\padd$ is defined as
\begin{equation}\label{eq:padd}
    \padd(\phi)=\add^s\np[\ZZ\XX]\colon=\add \bigoplus_{k\in\ZZ}  \np(\phi-k\Re(s))[k\XX].
\end{equation}
\item the pre-slicing $\pext$ is defined as
\begin{equation}\label{eq:pext}
    \pext(\phi)=\<\np[\ZZ\XX]\>^s\colon=\<  \np(\phi-k\Re(s))[k\XX] \>.
\end{equation}
\end{itemize}
Note that $\sigma$ does not necessary satisfy condition (d) in Definition~\ref{def:stab}
and hence may not be a stability condition.
We call such data (a central charge and collection of additive subcategories) a pre-stability condition
if they satisfy conditions (a), (b) and (c) in Definition~\ref{def:stab}.
\end{construction}
\begin{remark}
Clearly, $\padd(\phi)\subset\pext(\phi)$
although their sets of simple objects coincide.
Also note that $\sadd$ or $\sext$, as a pre-stability condition, may be induced from different triples.
\end{remark}

By construction, for any object $E\in\np(\phi), k\in\ZZ$,
    \[   Z_q(E[k\XX])=q^k\cdot m(E)\cdot e^{\mathbf{i} \pi \phi},   \]
where $m(E)\in\mathbb{R}_{>0}$.

\subsection{On global dimensions of stability conditions}
\begin{definition}[Global dimension]
Given a slicing $\sli$ on a triangulated category $\D$,
define the global dimension of $\sli$ by
\begin{gather}\label{eq:geq}
\gldim\sli=\sup\{ \phi_2-\phi_1 \mid
    \Hom(\sli(\phi_1),\sli(\phi_2))\neq0\}.
\end{gather}
For a stability conditions $\sigma=(Z,\sli)$ on $\D$,
its global dimension $\gldim\sigma$ is defined to be $\gldim\sli$.
\end{definition}

\begin{remark}
Given a heart $\h$ in $\D$, let $\sli$ be the
associated slicing with $\sli(\phi)=\h[\phi]$ for $\phi\in\ZZ$
and $\sli(\phi)=\emptyset$ otherwise.
Then we have\[ \gldim\sli=\gldim \h.\]
\end{remark}

\begin{lemma}
Let $\sli$ be a slicing on $\D$ with heart $\h_\phi=\sli[\phi,\phi+1)$.
Then
\[
    \mid \gldim\sli-\gldim\h_\phi \mid\leq 1.
\]
\end{lemma}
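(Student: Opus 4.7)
The plan is to prove the two inequalities $\gldim\sli\leq\gldim\h_\phi+1$ and $\gldim\h_\phi\leq\gldim\sli+1$ separately, each by unpacking one definition in terms of the other.

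For the first inequality, I would start with phases $\phi_1<\phi_2$ and semistable objects $A_i\in\sli(\phi_i)$ satisfying $\Hom(A_1,A_2)\neq0$, and show $\phi_2-\phi_1<\gldim\h_\phi+1$. Choose integers $k_i$ with $\phi_i\in[\phi+k_i,\phi+k_i+1)$, so that $A_i[-k_i]\in\h_\phi$. Since $\h_\phi$ is the heart of the bounded t-structure $(\sli[\phi,\infty),\sli(-\infty,\phi+1))$ associated to the slicing, one has the standard identification $\Hom_\D(B_1,B_2[m])=\Ext^m_{\h_\phi}(B_1,B_2)$ for $B_1,B_2\in\h_\phi$ and $m\geq 0$. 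Hence the non-vanishing of $\Hom(A_1,A_2)=\Hom(A_1[-k_1],A_2[-k_2][k_2-k_1])$ forces $k_2-k_1\leq\gldim\h_\phi$, and then $\phi_2-\phi_1<(k_2-k_1)+1\leq\gldim\h_\phi+1$. Taking the supremum yields the bound.

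For the reverse inequality, start with $X,Y\in\h_\phi$ and an integer $n$ with $\Ext^n_{\h_\phi}(X,Y)\neq0$, i.e. $\Hom_\D(X,Y[n])\neq0$. Apply the HN filtration of the slicing $\sli$ to both $X$ and $Y[n]$: the semistable factors of $X$ have phases in $[\phi,\phi+1)$, while those of $Y[n]$ have phases in $[\phi+n,\phi+n+1)$. Using the axiom (d) triangles and the long exact $\Hom$-sequences, chase the non-zero morphism through the filtrations to produce semistable factors $A\in\sli(\psi)$ of $X$ and $B\in\sli(\mu)$ of $Y[n]$ with $\Hom(A,B)\neq0$; this is the standard argument that a non-zero $\Hom$ between filtered objects descends to a non-zero $\Hom$ between some pair of factors. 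Since $\mu\geq\phi+n$ and $\psi<\phi+1$, we get $\gldim\sli\geq\mu-\psi>n-1$, so $n<\gldim\sli+1$. Taking the supremum over admissible $n$ gives $\gldim\h_\phi\leq\gldim\sli+1$.

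The two inequalities combine to give $|\gldim\sli-\gldim\h_\phi|\leq1$. The main obstacle is the chase through HN filtrations in the second direction: one needs to confirm that the diagonal long exact sequence argument works in the triangulated (rather than abelian) setting, which is a routine but slightly delicate induction on the length of the two HN filtrations using the triangles in \eqref{eq:X-HN}. The infinite-dimensional case $\gldim\h_\phi=\infty$ or $\gldim\sli=\infty$ is handled by the same argument, showing that finiteness of one is equivalent to finiteness of the other.
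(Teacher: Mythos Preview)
The paper states this lemma without proof, so there is nothing to compare against directly. Your argument is essentially correct and supplies the details the paper omits.

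One small point deserves attention. In the first direction you invoke the identification $\Hom_\D(B_1,B_2[m])=\Ext^m_{\h_\phi}(B_1,B_2)$ for $B_1,B_2\in\h_\phi$, which need not hold for $m\geq 2$ unless the realisation functor $\D^b(\h_\phi)\to\D$ is an equivalence. Fortunately you do not actually need it: the Remark immediately preceding the lemma defines $\gldim\h_\phi$ as $\gldim$ of the integer slicing $\phi\mapsto\h_\phi[\phi]$, i.e.\ $\gldim\h_\phi=\sup\{m\in\ZZ:\Hom_\D(\h_\phi,\h_\phi[m])\neq0\}$. With that definition, the non-vanishing of $\Hom_\D(A_1[-k_1],(A_2[-k_2])[k_2-k_1])$ directly gives $k_2-k_1\leq\gldim\h_\phi$, bypassing any abelian $\Ext$. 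The rest of your first direction is fine (and note $\phi_1<\phi_2$ forces $k_1\leq k_2$, so $k_2-k_1\geq 0$).

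For the second direction, your filtration chase is the standard one and works: if $\Hom_\D(A_i,B_j)=0$ for all HN factors $A_i$ of $X$ and $B_j$ of $Y[n]$, then two inductions on the filtration lengths (using the long exact $\Hom$-sequence attached to each triangle) force $\Hom_\D(X,Y[n])=0$. Contrapositively one obtains the desired nonzero $\Hom(A,B)$. The strict inequality $n-1<\mu-\psi\leq\gldim\sli$ then gives $n\leq\gldim\sli+1$ for each such integer $n$; since $\gldim\h_\phi$ is a supremum of integers it is either attained or infinite, and in either case $\gldim\h_\phi\leq\gldim\sli+1$.
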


Recall that $\operatorname{Slice}\D$ the space of (locally-finite) slicings on $\D$
with the generalized metric (\cite[Lemma~6.1]{B1})
\begin{gather}\label{eq:slice d}
\begin{array}{rl}
    d(\sli,\hh{Q})&=\inf\{ \epsilon\in\mathbb{R}_{\geq0}\mid
        \hh{Q}(\phi)\subset\sli[\phi-\epsilon,\phi+\epsilon], \forall \phi\in\mathbb{R} \}\\
    &=\sup\left\{
        |\phi^+_{\hh{Q}}(E)-\phi^+_{\sli}(E)|,
        |\phi^-_{\hh{Q}}(E)-\phi^-_{\sli}(E)| \; \Big{|} 0\neq E\in\D \right\}.
\end{array}
\end{gather}
Moreover, the generalized metric on $\Stab\D$ can be defined as (\cite[Proposition~8.1]{B1})
\[
    d(\sigma_1,\sigma_2)=\sup\left\{
        |\phi^+_{\sigma_2}(E)-\phi^+_{\sigma_1}(E)|,
        |\phi^-_{\sigma_2}(E)-\phi^-_{\sigma_1}(E)| ,
        |\log\frac{m_{\sigma_2}(E)}{m_{\sigma_1}(E)}|\; \Big{|} 0\neq E\in\D \right\}.
\]

Therefore we have the following.

\begin{lemma}\label{lem:conti}
The function $\gldim \colon \operatorname{Slice}\D \to \R_{\ge 0}$ is continuous
and hence induces a continuous function
\begin{gather}\label{eq:gldim}
    \gldim\colon\Stab\D\to\mathbb{R}_{\ge 0}
\end{gather}
on $\Stab\D$.
\end{lemma}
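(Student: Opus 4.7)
The plan is to show the stronger quantitative statement that $\gldim$ is $2$-Lipschitz with respect to the generalized metric on $\operatorname{Slice}\D$, from which the continuity on $\Stab\D$ follows immediately because the projection $\Stab\D \to \operatorname{Slice}\D$ is $1$-Lipschitz (the $\sup$ defining $d(\sigma_1,\sigma_2)$ manifestly dominates the $\sup$ defining $d(\sli_1,\sli_2)$, by comparing the formulas displayed in \eqref{eq:slice d} and the line immediately after).

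So fix two slicings $\sli, \hh{Q}$ with $d(\sli,\hh{Q}) < \epsilon$. By definition of the metric, for every $\phi \in \R$,
\[
    \hh{Q}(\phi) \subset \sli[\phi-\epsilon,\phi+\epsilon],
    \qquad
    \sli(\phi) \subset \hh{Q}[\phi-\epsilon,\phi+\epsilon].
\]
The key step is to prove the Lipschitz estimate
\[
    \gldim\hh{Q} \le \gldim\sli + 2\epsilon,
\]
from which the symmetric bound (and hence $|\gldim\sli-\gldim\hh{Q}|\le 2\epsilon$) follows by swapping the roles of $\sli$ and $\hh{Q}$.

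To prove the estimate, let $g = \gldim\sli$ and suppose $\phi_2 - \phi_1 > g + 2\epsilon$ with $A \in \hh{Q}(\phi_1)$, $B \in \hh{Q}(\phi_2)$. By the containment above, every $\sli$-HN factor of $A$ has phase in $[\phi_1-\epsilon,\phi_1+\epsilon]$ and every $\sli$-HN factor of $B$ has phase in $[\phi_2-\epsilon,\phi_2+\epsilon]$. The phase gap between any pair of such HN factors is at least $(\phi_2-\epsilon) - (\phi_1+\epsilon) = (\phi_2-\phi_1) - 2\epsilon > g$, so by definition of $\gldim\sli$ there is no nonzero morphism between them. A standard d\'evissage argument using the HN filtration then yields $\Hom(A,B) = 0$. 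Taking the supremum over such $(\phi_1,\phi_2)$ shows that $\gldim\hh{Q}$ does not exceed $g+2\epsilon$, as required. (If $\gldim\sli = \infty$ the inequality is vacuous; and if $\gldim\hh{Q} = \infty$ while $\gldim\sli$ is finite, the estimate just derived forces a contradiction, so the $\infty$-value is preserved under small perturbations in the natural sense.)

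The only mild subtlety I expect is the reduction from $\Hom(A,B) = 0$ on pairs of HN factors to $\Hom(A,B) = 0$ for the original $\hh{Q}$-semistable $A,B$. This is standard: by induction on the length of the $\sli$-HN filtration of $A$ (or $B$), each triangle $E_{i-1} \to E_i \to A_i$ produces a long exact sequence whose neighbouring $\Hom$ groups involving $B$ vanish by the phase-gap argument, hence $\Hom(E_i,B)=0$ inductively, and finally $\Hom(A,B)=0$. Once the Lipschitz bound on $\operatorname{Slice}\D$ is in hand, pulling back along the $1$-Lipschitz projection $\Stab\D \to \operatorname{Slice}\D$ gives the continuity statement \eqref{eq:gldim} on $\Stab\D$.
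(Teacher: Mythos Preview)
Your proof is correct and follows essentially the same approach as the paper: both establish the $2$-Lipschitz bound $|\gldim\sli-\gldim\hh{Q}|\le 2\,d(\sli,\hh{Q})$ using the containment $\hh{Q}(\phi)\subset\sli[\phi-\epsilon,\phi+\epsilon]$. The only cosmetic difference is that the paper argues in the direct form (a nonzero $\Hom$ witnessing $\gldim\hh{Q}$ is transported to a nonzero $\Hom$ between $\sli$-slices, giving a lower bound on $\gldim\sli$), whereas you argue contrapositively (if the phase gap exceeds $\gldim\sli+2\epsilon$ then the $\Hom$ vanishes by d\'evissage); these are logically equivalent and the underlying estimate is identical.
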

\begin{proof}
We will use the first line of \eqref{eq:slice d} as definition
for the topology of $\operatorname{Slice}\D$.
For any $\epsilon>0$, we claim that
if $d(\sli,\hh{Q})<\epsilon/2$, then
\[\mid \gldim\sli-\gldim\hh{Q}\mid<\epsilon.\]

For any $\delta>0$, there exists $\phi_1,\phi_2\in\mathbb{R}$ such that
$\phi_2-\phi_1\in(\gldim\hh{Q}-\delta,\gldim\hh{Q}]$ and
\[
    \Hom\big( \hh{Q}(\phi_1),\hh{Q}(\phi_2) \big)\neq0.
\]
As $\hh{Q}(\phi_i)\subset\sli(\phi_i-\epsilon/2,\phi_i+\epsilon/2)$,
we have
\[
    \Hom\big( \sli(\phi_1-\epsilon/2,\phi_1+\epsilon/2),
        \sli(\phi_2-\epsilon/2,\phi_2+\epsilon/2) \big)\neq0.
\]
Thus, $$\gldim\sli\geq (\phi_2-\epsilon/2 )- (\phi_1+\epsilon/2)>\gldim\hh{Q}-\delta-\epsilon,$$
which implies that $\gldim\sli>\gldim\hh{Q}-\epsilon$.
Similarly we have the inequality in the other direction, which completes the claim (and hence the proof).
\end{proof}

\subsection{The inducing theorem}

\begin{lemma}\label{lem:Hom}
Suppose $\sli$ is either $\padd$ in \eqref{eq:padd} or $\pext$ in \eqref{eq:pext},
where $\np$ is the slicing of a stability condition $\D_\infty$ of an $\XX$-baric heart of $\ns$
of some Calabi-Yau-$\XX$ category $\D_\XX$.
If $\Re(s)\geq\gldim\ns$, then
$$\Hom_{\D_\XX}(\sli(\phi_1),\sli(\phi_2))=0$$ for $\phi_1>\phi_2$.
\end{lemma}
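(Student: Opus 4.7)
The plan is to reduce the problem to a Hom-vanishing between the ``pure'' building blocks $\np(\phi_i-k_i\Re(s))[k_i\XX]$ of $\sli(\phi_i)$, and then to run a short case analysis on the $\XX$-level difference $\delta:=k_2-k_1$, using the Calabi-Yau-$\XX$ refinement \eqref{eq:01} of the baric vanishing, the slicing of $\np$, and Serre duality \eqref{eq:X} together with the global dimension bound $\Re(s)\geq\gldim\ns$.

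For the reduction, note that for $\sli=\padd$ every object $A_i\in\sli(\phi_i)$ splits as a finite direct sum $A_i=\bigoplus_k A_i^k$ with $A_i^k\in\np(\phi_i-k\Re(s))[k\XX]$, so
\[
    \Hom_{\D_\XX}(A_1,A_2)=\bigoplus_{k_1,k_2}\Hom_{\D_\XX}(A_1^{k_1},A_2^{k_2}),
\]
and it suffices to show that each summand vanishes. For $\sli=\pext$, an object of $\sli(\phi_i)$ is built by a finite iterated extension of such components, and a standard induction on the total extension length (applying the long exact sequences obtained from $\Hom(-,A_2)$ and $\Hom(A_1,-)$ to the triangles defining the extensions) reduces the problem to the same pure-to-pure Hom-vanishing.

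For the core computation, write $A_i^{k_i}=B_i[k_i\XX]$ with $B_i\in\np(\psi_i)$ where $\psi_i:=\phi_i-k_i\Re(s)$. Then
\[
    \Hom_{\D_\XX}(A_1^{k_1},A_2^{k_2}) \;=\; \Hom_{\D_\XX}(B_1, B_2[\delta\XX]).
\]
By the Calabi-Yau-$\XX$ refinement \eqref{eq:01}, this Hom is automatically zero unless the $\XX$-levels of source and target are equal or differ by one, i.e.\ unless $\delta\in\{0,1\}$. If $\delta=0$, the Hom collapses to $\Hom_{\D_\infty}(B_1,B_2)$, which vanishes by the slicing property of $\np$ since $\psi_1-\psi_2=\phi_1-\phi_2>0$. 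If $\delta=1$, Serre duality \eqref{eq:X} gives
\[
    \Hom_{\D_\XX}(B_1,B_2[\XX])\;\cong\;\Hom_{\D_\XX}(B_2[\XX],B_1[\XX])^\vee\;\cong\;\Hom_{\D_\infty}(B_2,B_1)^\vee,
\]
and now
\[
    \psi_1-\psi_2\;=\;(\phi_1-\phi_2)+\Re(s)\;>\;\Re(s)\;\geq\;\gldim\ns,
\]
so by the very definition of $\gldim\ns$ the right-hand side vanishes.

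The only step that uses the hypothesis $\Re(s)\geq\gldim\ns$ is the $\delta=1$ case, and I expect this to be the main subtlety: one has to line up the level conventions in the Calabi-Yau-$\XX$ Serre duality correctly so that the gldim inequality controls the right direction of Hom in $\D_\infty$. The $\sli=\pext$ reduction is routine once one believes the $\sli=\padd$ case, and the large-$|\delta|$ vanishing is purely structural from the baric heart.
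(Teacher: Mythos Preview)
Your proof is correct and follows essentially the same approach as the paper: reduce to the pure building blocks $\np(\phi_i-k_i\Re(s))[k_i\XX]$ and run the three-case analysis on $\delta=k_2-k_1$, using \eqref{eq:01} for $\delta\notin\{0,1\}$, the slicing axiom for $\delta=0$, and Serre duality plus the $\gldim$ bound for $\delta=1$. Your reduction step for $\pext$ via iterated extensions is slightly more explicit than the paper's, which simply states the pure-to-pure claim as what must be shown.
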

\begin{proof}
We need to show that for any $\phi_1>\phi_2, k_1,k_2\in\ZZ$,
\begin{gather}\label{eq:v}
    \Hom_{\D_\XX}(\np(\phi_1-k_1\Re(s))[k_1\XX],\np(\phi_2-k_2\Re(s))[k_2\XX])=0.
\end{gather}
There are three cases:
\begin{itemize}
  \item If $k_2-k_1<0$ or $k_2-k_1>1$, \eqref{eq:v} follows from \eqref{eq:01}.
  \item If $k_1=k_2$, the left hand side of \eqref{eq:v}
  equals $$\Hom_{\D_\infty}(\np(\phi_1-k_1\Re(s)),\np(\phi_2-k_1\Re(s))),$$
  which is zero since $\phi_1-k_1\Re(s)>\phi_2-k_1\Re(s)$.
  \item If $k_2-k_1=1$, then apply Calabi-Yau-$\XX$ duality, the left hand side of \eqref{eq:v} equals
  $$D\Hom_{\D_\infty}(\np(\phi_2-k_2\Re(s)),\np(\phi_1-k_1\Re(s))).$$
  Since
  \[\begin{array}{rl}
    &\left( \phi_1-k_1\Re(s) \right) - \left( \phi_2-k_2\Re(s) \right)\\
  =&\phi_1-\phi_2+\Re(s)\\
  >&\gldim\np,
  \end{array}\]
  the $\Hom$ vanishes.
\end{itemize}
\end{proof}

\begin{theorem}\label{thm:inducing}
Let $\D_\XX$ be a Calabi-Yau-$\XX$ category satisfies Assumption~\ref{assumption:R}.
Given a stability condition $\ns=(\nz,\np)$ on an $\XX$-baric heart $\D_\infty$ of $\D_\XX$,
then we have the following.
\begin{enumerate}
\item The induced additive pre-stability condition $\sadd=(Z, \padd)$ is
a stability condition on $\D_\XX$
if and only if
\begin{gather}\label{eq:cond}
    \Hom(\np(\phi_1),\np(\phi_2))=0 \quad \text{for any $\;\phi_2-\phi_1\geq\Re(s)-1$.}
\end{gather}
\item The induced extension pre-stability condition $\sext=(Z, \pext)$ is
a stability condition on $\D_\XX$
if and only if
\begin{gather}\label{eq:closed}
    \gldim\ns\le\Re(s)-1.
\end{gather}
\end{enumerate}
Clearly, both $\sadd$ and $\sext$ satisfies \eqref{eq:X=s} and hence
$(\sadd,s)$ and $(\sext,s)$ are both $q$-stability conditions.
Finally, they satisfy $q$-support property, where $\Grot(\D_\infty)$ provides the $\ZZ^n$ lattice in Condition~$1^\circ$
of Definition~\ref{def:support}.
\end{theorem}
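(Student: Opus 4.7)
The plan is to split the verification into four stages: the easy axioms together with the $q$-support property, axiom (c), axiom (d) via a two-stage refinement of the $\XX$-baric filtration, and finally the necessity direction.

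First I would verify the easy requirements. By Construction~\ref{con:q}, $Z=q_s\circ Z_q$ is $R$-linear so axiom (f) holds; axiom (a) follows from $Z(E[k\XX])=q^k\cdot m(E)\cdot e^{\mathbf{i}\pi\phi}$ having argument $\phi$ regardless of $k$ for $E\in\np(\phi)$; and axioms (b), (e) reduce to the reindexing $k\mapsto k-1$ in (\ref{eq:padd})/(\ref{eq:pext}) combined with $\np(\phi+1)=\np(\phi)[1]$. For the $q$-support property, take $\Gamma:=\Grot(\D_\infty)$ as the $\ZZ^n$-lattice via (\ref{eq:KKK}); condition $1^\circ$ is built into the construction of $\widehat{\ss}(\sigma)$ from $\ns$-semistable classes, condition $2^\circ$ transfers from the support property of $\ns$, and condition $3^\circ$ holds with $N_0=1$ because baric vanishing combined with CY-$\XX$ duality forces $\Hom(\D_\infty,\D_\infty[k\XX])=0$ for $k\notin\{0,1\}$.

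Axiom (c) is essentially Lemma~\ref{lem:Hom}. For $\phi_1>\phi_2$ the only nonvanishing pairing between generators of $\padd(\phi_1)$ and $\padd(\phi_2)$ (and similarly for $\pext$) lives in adjacent baric levels $k_2=k_1+1$; CY-$\XX$ duality identifies it with $D\Hom_{\D_\infty}(\np(\phi_2-k_2\Re(s)),\np(\phi_1-k_1\Re(s)))$, whose phase gap is $\phi_1-\phi_2+\Re(s)>\Re(s)$ and is therefore covered by either (\ref{eq:cond}) or (\ref{eq:closed}). The serious work lies in axiom (d).

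For (d) my plan is to refine the $\XX$-baric HN filtration of $E$ by the $\ns$-HN on each baric slice. Given baric pieces $A_i\in\D_\infty[k_i\XX]$ with $k_1>\cdots>k_m$, I would subdivide each $A_i$ by its $\ns$-HN to obtain blocks $B_{i,j}\in\np(\psi^{(i)}_j)[k_i\XX]$ of $\padd$- (resp.\ $\pext$-)phase $\psi^{(i)}_j+k_i\Re(s)$, and then reassemble them in decreasing order of final phase. The only delicate step is an adjacent inversion: a block $B\in\np(\psi')[(k-1)\XX]$ having higher $\padd$-phase than a block $B'\in\np(\psi)[k\XX]$ sitting above it in the baric filtration, which happens precisely when $\psi'-\psi\ge\Re(s)$. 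In the $\sadd$ case the obstruction to swapping them is $\Ext^1(B,B')$, which by CY-$\XX$ duality equals $D\Hom_{\D_\infty}(\np(\psi+1),\np(\psi'))$; its phase gap is $\psi'-\psi-1\ge\Re(s)-1$, exactly the range where (\ref{eq:cond}) produces vanishing, and this is the origin of the shift by $-1$. For $\sext$ no splitting is needed since the extension closure absorbs non-split merges, so axiom (c) together with (\ref{eq:closed}) suffices to guarantee strictly decreasing combined phases.

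For necessity I would reverse this construction. Suppose (\ref{eq:cond}) (resp.\ (\ref{eq:closed})) fails, yielding $A\in\np(\psi)$ and $B\in\np(\psi')$ with $\Hom(A,B)\neq 0$ and $\psi'-\psi\ge\Re(s)-1$ (resp.\ strict). CY-$\XX$ duality furnishes a nonzero $g\colon B\to A[\XX]$, and I would set $E:=\Cone(g)$, a nontrivial extension $A[\XX]\to E\to B[1]$ whose baric pieces have $\padd$-phases $\psi+\Re(s)$ and $\psi'+1$ placed in the wrong order for a baric-compatible HN. Combined with the auxiliary vanishing $\Hom(A[\XX],B[2])=0$ (a direct baric plus CY-$\XX$ computation) which blocks any swapped filtration, $E$ admits no valid $\padd$-HN, contradicting that $\sadd$ is a stability condition; the analogous construction handles $\sext$ using the strict inequality. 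The main obstacle I anticipate is organizing the general reassembly in (d) — tracking many baric levels simultaneously and sequencing the swap-and-combine moves into a single finite procedure — together with the boundary case $\psi'-\psi=\Re(s)-1$ in the necessity direction for $\sadd$, where the phase coincidence $\psi'+1=\psi+\Re(s)$ must be exploited to force $E$ out of every $\padd$-semistable class.
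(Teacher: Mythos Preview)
Your approach is essentially the same as the paper's: axiom (c) via Lemma~\ref{lem:Hom}, axiom (d) by refining the $\XX$-baric filtration with the $\ns$-HN and swapping adjacent factors via the Octahedral Axiom using the same $\Ext^1$ vanishing, and necessity by building a nontrivial extension $M[\XX]\to E\to L$ whose baric filtration cannot serve as an HN filtration. The one place where the paper is more explicit than your sketch is the necessity step: rather than arguing that only two two-term filtrations are possible and one is ``blocked'', the paper takes an arbitrary $\padd$-HN of $E$, rearranges its factors by baric level (using the same swap vanishing in reverse), invokes uniqueness of the $\XX$-baric filtration to identify the rearranged factors with $M[\XX]$ and $L$, and then uses that $M,L$ are $\ns$-semistable to force the HN back to exactly two terms---at which point the phase inequality $\phi_1\le\phi_2$ gives the contradiction, handling the boundary case you flagged automatically.
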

\begin{proof}
We only prove the result for additive case where the extension case is just a slight variation.
Note that $q$-support property can be checked directly once we have shown
$(\sadd,s)$ and $(\sext,s)$ are both $q$-stability conditions.

First, we prove the `if' part.
We need to show that $\sigma=(q_s\circ Z_q, \sli)$ is a stability condition on $\D_\XX$,
where $\sli$ is defined as $\sli=\add^s\np[\ZZ\XX]$.
Clearly, $Z=q_s\circ Z_q$ is a group homomorphism that is compatible with $\sli$
in the sense that
$$ Z(E)=m(E) e^{\mathbf{i} \pi \phi} $$
for some $m(E)\in\mathbb{R}_{>0}$ if $E\in\sli(\phi)$.
By Lemma~\ref{lem:Hom},
$\sli$ satisfies the $\Hom$-vanishing properties.
Thus what is left to show is any object exists (and hence unique)
a HN-filtration (with respect to $\sli)$.

Any object $M$ in $\D_\XX$ admits a filtration, with respect to the $\XX$-baric heart $\D_\infty$,
\begin{gather}\label{eq:filtx}
    \filt_{\XX}(M)=\{E_1[k_1\XX],\ldots,E_m[k_m\XX]\mid E_i\in\D_\infty, k_1>\cdots>k_m (\in\ZZ) \}.
\end{gather}
Moreover, each $E_i\in\D_\infty$ admits a filtration with respect to the slicing $\np$
and hence \eqref{eq:filtx} can be refined as
\begin{gather}\label{eq:filt}
\begin{array}{rll}
    \filt_0(M)=\{&E_{1,1}[k_1\XX],\ldots,&E_{1,l_1}[k_1\XX],\\
    &E_{2,1}[k_2\XX],\ldots,&\\
    &\qquad\qquad\ldots\ldots&E_{m,l_m}[k_m\XX]\mid E_{i,r}\in\np(\phi_{i,r})\\
    &k_1>\cdots>k_m,& l_1,\ldots,l_m\in\ZZ \}
\end{array}
\end{gather}
or simply as
\begin{gather}\label{eq:filt0}
    \filt_0(M)=\{ F_1,\ldots,F_t\mid F_j\in\sli(\phi_j)\}.
\end{gather}
Now we claim that we can exchange the position of the factors in $\filt_0(M)$ inductively
so that it becomes a filtration for $\sli$
\[
    \filt(M)=\{ F_1',\ldots,F_t'\mid F_j'\in\sli(\phi_j'),
    \phi_1'>\cdots>\phi_t'\}.
\]
This is equivalent to show that (and then use induction)
\begin{itemize}
\item for $j$ in \eqref{eq:filt0} with $\phi_j<\phi_{j+1}$,
we have
\begin{gather}\label{lem:FF}
    \Hom_{\D_\XX}(F_{j+1},F_j[1])=0,
\end{gather}
which implies, by the Octahedral Axiom, that the exchange of $F_j$ and $F_{j+1}$ is admissible.
\end{itemize}
Note that $F_j=M[a\XX], F_{j+1}=L[b\XX]$ for some $M,L\in\D_\infty, a\geq b\in\ZZ$.
There are three cases:
\begin{itemize}
\item If $a=b$, then $F_j,F_{j+1}$ are factors of the same $E_i$ (with respect to the slicing $\np$)
and hence $\phi_j>\phi_{j+1}$ which contradicts to $\phi_j<\phi_{j+1}$.
\item If $a>b+1$, then \eqref{lem:FF} follows from \eqref{eq:01}.
\item If $a=b+1$,
then $\phi_j<\phi_{j+1}$ is equivalent to
\[ \varphi_{\np} (M)+\Re(s) < \varphi_{\np} (L) ,\]
where $\varphi_{\np}$ denotes the phase of objects in $\D_\infty$ with respect to $\np$.
Hence
\[ \gldim\np<\Re(s)-1 = \varphi_{\np} (L)-\varphi_{\np} (M[1]), \]
which implies $\Hom_{\D_\infty}(M[1],L)=0$.
Therefore we have
\[
    \Hom_{\D_\XX}(F_{j+1},F_j[1])=\Hom_{\D_\XX}(L,M[1+\XX])=D\Hom_{\D_\XX}(M[1],L)=0.
\]
\end{itemize}
In all, $\sli$ is a slicing, $(q_s\circ Z_q, \np)$ is in $\Stab\D_\XX$
and
$\sigma=\hh{L}_*^s(\ns)=(Z_q,\np,s)$ is in $\OStab_s\D_\XX$.

Next, let us prove the `only if' part.
Suppose that \eqref{eq:cond} does not hold, then there exists
$M,L$ in $\D_\infty$ such that
$$\Hom_{\D_\infty}(M[1],L)\neq0\quad
\text{and}\quad \varphi_{\np}(L)-\varphi_{\np}(M[1])\geq\Re(s)-1,$$
where $\varphi_{\np}$ is the phase with respect to the slicing $\np$.

Now consider $M,L$ in $\D_\XX$. We have
\begin{gather}\label{eq:non}
    0\neq D\Hom_{\D_\XX}(M[1],L)=\Hom_{\D_\XX}(L,M[1+\XX])
\end{gather}
and (here $\varphi_{\np}$ denote the phase with respect to $\np$)
\begin{gather}\label{eq:contra}
    \varphi_{\np}(L)-\varphi_{\np}(M[\XX])=
    \varphi_{\np}(L)-\varphi_{\np}(M)-\Re(s)\geq0.
\end{gather}
By \eqref{eq:non}, there exists an object $E$ sits in the nontrivial triangle
$$M[\XX]\to E\to L\to M[1+\XX].$$
Consider the filtration of $E$ with respect to the $\XX$-baric heart $\D_\infty$,
which must be the triangle above, i.e.
\begin{gather}\label{eq:infiltatioin}
    \filt_{\XX}(E)=\{M[\XX],L\}.
\end{gather}
Consider the filtration of $E$ with respect to the slicing $\sli=\add^s\np[\ZZ\XX]$ 
\begin{gather}\label{eq:infiltatioin0}
    \filt(E)=\{ F_1,\ldots,F_t\mid F_j\in\sli(\phi_j),
    \phi_1>\cdots>\phi_t\}.
\end{gather}
Again, using the Octahedral Axiom (and vanishing $\Hom$) we can rearrange
the order of the factors so that it becomes
\[
    \filt(E)=\{ F_1',\ldots,F_t'\mid F_j'=E_i[k_i\XX], E_i\in\hh{R}_0,
    k_1\geq\cdots\geq k_t\}.
\]
Comparing with the filtration \eqref{eq:infiltatioin},
we deduce that $1=k_1=\cdots=k_j,k_{j+1}=\cdots=k_t=0$ and
there are filtrations
\begin{gather*}
    \filt(M)=\{ E_1,\ldots,E_j \},\\
    \filt(L)=\{ E_{j+1},\ldots,E_t \}.
\end{gather*}
As $M$ and $L$ are (semi)stable objects, then the phase of $E_i$ equal phase of $M$
(with respect to $\sli$) for $1\leq i\leq j$ and
the phase of $E_i$ equal phase of $L$ (with respect to $\sli$) for $j+1\leq i\leq t$.
As $\{E_i[k_i]\}$ are rearrangement of $F_i\in\sli(\phi_j)$, we deduce
that $j=1$ and $t=2$, i.e. the filtration \eqref{eq:infiltatioin}
coincide with the filtration \eqref{eq:infiltatioin0}.
However, then \eqref{eq:contra} implies $\phi_1=\varphi_{\np}(M[\XX])\leq\varphi_{\np}(L)=\phi_2$
that contradicts to $\phi_1>\phi_2$, which completes the proof.
\end{proof}

\begin{definition}\label{eq:def:qstab}
An open (induced) $q$-stability condition on $\D_{\XX}$ is a pair $(\sigma,s)$
consisting of a stability condition $\sigma$ on $\D_\XX$ and a complex parameter $s$,
satisfying
\begin{itemize}
    \item $\sigma=\sadd$ is an additive pre-stability condition induced from some triple
    $(\D_\infty,\ns,s)$
    as in Construction~\ref{con:q} with
\begin{equation}\label{eq:open}
    \gldim\ns+1<\Re(s)
\end{equation}
\end{itemize}
Denote by $\OStab_s\D_\XX$ the set of all open $q$-stability conditions with the parameter $s\in\CC$
and by $\OStab\D_\XX$ the union of all $\OStab_s\D_\XX$.

Similarly, a closed (induced) $q$-stability condition on $\D_{\XX}$ is a pair $(\sigma,s)$
consisting of a stability condition $\sigma$ on $\D_\XX$ and a complex parameter $s$,
satisfying
\begin{itemize}
    \item $\sigma=\sext$ is an extension pre-stability condition induced from some triple
    $(\D_\infty,\ns,s)$. Note that Theorem~\ref{thm:inducing} forces the inequality \eqref{eq:closed} to hold.
\end{itemize}
Denote by $\CStab_s\D_\XX$ the set of all closed $q$-stability conditions with the parameter $s\in\CC$.
\end{definition}

By comparing inequalities \eqref{eq:open} and \eqref{eq:closed}
one deduces that $\OStab_s\D_\XX\subset\CStab_s\D_\XX$.
We prefer to consider open $q$-stability conditions as we can glue them together.
In fact, in most of the cases we are interested in
(with $\Re(s)\ge2$), we expect they coincide.

\begin{theorem}\label{thm:manifold}
Let $\D_\XX$ be a Calabi-Yau-$\XX$ category satisfying Assumption~\ref{assumption:R}.
Then $\OStab\D_\XX$ is a complex manifold of dimension $n+1$.
\end{theorem}
\begin{proof}
Given an open $q$-stability condition $(\sigma,s)$, suppose that
$\sigma$ is induced from a triple $(\D_\infty,\ns,s)$.
Then by Theorem~\ref{thm:inducing}, we have \eqref{eq:cond}.
As $\gldim$ is continuous by Lemma~\ref{lem:conti},
there is a neighbourhood $U(\ns)$ of $\ns$ in $\Stab\D_\infty$ satisfying
\[ \Re(s)>\gldim\ns'+1+\epsilon\]condition
for any $\ns'\in U(\ns)$ and some positive real number $\epsilon$.
Hence, for any $\ns'$ in $U(\ns)$ and $s'\in(s-\epsilon,s+\epsilon)$,
the triple $(\D_\infty,\ns',s')$ induces an open $q$-stability.
This gives a local chart for $\OStab\D_\XX$, that isomorphic to
\[
    U(\ns)\times(s-\epsilon,s+\epsilon).
\]
This type of charts provide the required complex manifold structure.
\end{proof}

\begin{remark}
One of the key feature of the local charts we construct in Theorem~\ref{thm:manifold}
is that each point $(\sigma,s)$ admits a distinguished section $\Gamma=\{(\sigma',s')\}$
for $s'\in(s-\epsilon,s+\epsilon)$, satisfying
\begin{itemize}
  \item the sets of (semi)-stable objects are invariants along this section.
\end{itemize}
\end{remark}

By Theorem~\ref{thm:inducing}, the minimal value of the global function
on the $\XX$-baric heart is important concerning the question that
if $\QStab_s\D_\XX$ is empty or not.

\subsection{Example of acyclic quivers}
Here is a class of examples of $\XX$-baric hearts.
\begin{proposition}\label{prop:X-heart}
$\DQ$ is an $\XX$-baric heart of $\DXQ$.
\end{proposition}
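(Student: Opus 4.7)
The plan is to verify the two defining axioms of an $\XX$-baric heart for $\hh{L}_Q(\DQ) \subset \DXQ$, leaning on Corollary~\ref{cor:Lag} and the explicit $\ZZ\oplus\ZZ\XX$-grading of $\Gamma_\XX Q$ from Definition~\ref{def:CYXQ}.

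For the $\Hom$-vanishing axiom, I would unpack \eqref{eq:HOM} by $\XX$-degree. Since $kQ$ has no $\XX$-grading, $\RHom_{kQ}(L,M)$ lives in $\XX$-degree zero while $D\RHom_{kQ}(M,L)[-\XX]$ lives in $\XX$-degree one; hence for any $L,M\in\DQ$, the space $\Hom_{\DXQ}(\hh{L}_Q(L),\hh{L}_Q(M)[k\XX])$ can be nonzero only for $k\in\{0,1\}$. For $A_i=\hh{L}_Q(L_i)[k_i\XX]$ with $k_1>k_2$, the relevant shift is $k=k_2-k_1\leq -1$, which lies outside $\{0,1\}$, so $\Hom_{\DXQ}(A_1,A_2)=0$ as required.

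For the HN-filtration axiom, the key bookkeeping read off Definition~\ref{def:CYXQ} is that every generator of $\Gamma_\XX Q$ has non-positive $\XX$-degree (original arrows in $\XX$-degree $0$; opposite arrows $a^*$ and loops $t_i$ in $\XX$-degree $-1$), while the differential $\diff$ has $\XX$-degree $0$. After choosing a finite-dimensional minimal ddg-model (still called $E$) of $E\in\DXQ$, define
\[
    F_{\geq k} E := \bigoplus_{m\in\ZZ,\; l\geq k} E^{m+l\XX}.
\]
Non-positivity of the $\XX$-degrees of the algebra generators and of the differential guarantees that $F_{\geq k}E$ is a ddg-submodule of $E$. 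Finite-dimensionality forces only finitely many $\XX$-degrees to contribute, so the filtration has finite length; listing the indices where $F_{\geq k}E/F_{\geq k+1}E\neq 0$ in decreasing order yields the required sequence $k_1>\cdots>k_m$.

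The graded pieces $G_{k_i}:=F_{\geq k_i}E/F_{\geq k_i+1}E$ are concentrated in $\XX$-degree $k_i$, so every opposite arrow $a^*$ and every loop $t_j$ necessarily acts by zero on $G_{k_i}$ (their outputs lie in $F_{\geq k_i+1}$, which has been quotiented out). Thus $G_{k_i}$ carries only the residual action of the $\XX$-degree-zero subalgebra $kQ\subset\Gamma_\XX Q$, together with the original differential, giving an identification $G_{k_i}\cong\hh{L}_Q(N_i)[k_i\XX]$ for a bounded complex $N_i$ of finite-dimensional $kQ$-modules, i.e.\ $N_i\in\DQ$. The tower of short exact sequences $F_{\geq k_i+1}E\hookrightarrow F_{\geq k_i}E\twoheadrightarrow G_{k_i}$ yields the required tower of exact triangles \eqref{eq:X-HN}. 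The only technical subtlety, rather than a genuine obstacle, is to ensure the model-level filtration passes cleanly to $\DXQ$, which is routine once one works with a cofibrant (minimal semi-free) dg representative so that quasi-isomorphisms preserve the filtration and short exact sequences become distinguished triangles.
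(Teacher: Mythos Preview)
Your proof is correct and follows essentially the same approach as the paper's: both use the $\ZZ\XX$-grading filtration to obtain the HN-type decomposition, and both trace the $\Hom$-vanishing to the non-positivity of the $\XX$-degrees of the generators of $\Gamma_\XX Q$. The only minor difference is that for the vanishing you invoke the Lagrangian immersion formula \eqref{eq:HOM} directly (obtaining the sharper $k\in\{0,1\}$ statement, i.e.\ \eqref{eq:01}), whereas the paper phrases it as a consequence of $\Gamma_\XX Q$ being concentrated in non-positive $\XX$-degrees; your argument is in fact a fleshed-out version of what the paper leaves as a one-line sketch.
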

\begin{proof}
Regard $\Gamma_{\XX}Q$ as a $\ZZ\XX$-graded algebra.
As $\XX$ is the grading shift,
any object admits an (HN-)filtration \eqref{eq:X-HN}.
The Hom vanishing property comes from the fact that
$\Gamma_{\XX}Q$ is concentrated in non-positive degrees (with respect to the $\ZZ\XX$ grading).
Therefore, $\DQ$ is an $\XX$-baric heart of $\DXQ$.
\end{proof}
Thus, we can apply our general results, namely
Theorem~\ref{thm:inducing} and Theorem~\ref{thm:manifold}.
In particular, we can construct $q$-stability conditions on $\DXQ$
for any $s$ with $\Re(s)\ge2$.

\section{Perfect derived categories as cluster-$\XX$ categories}\label{sec:cluster}
Let $Q$ be an acyclic quiver as above.
\subsection{Cluster categories}
The cluster categories $\C(Q)$ were introduced in \cite{BMRRT}
to categorify cluster algebras associated to $Q$.
Keller \cite{K2} provided the construction of cluster categories as orbit categories.

\begin{definition}\cite{BMRRT, K2}\label{def:cluster}
For any integer $m\geq 2$, the \emph{$m$-cluster shift} is the auto-equivalence of $\per\k Q\cong\DQ$ given by
$\shift{m}=\tau^{-1}\circ[m-1]$.
The \emph{$m$-cluster category} $\C_m(Q)$ is the orbit category
\begin{gather}\label{eq:clusterm}
    \C_m(Q)\colon=\DQ/\shift{m}
\end{gather}
\end{definition}

Note that $\C_m(Q)$ is Calabi-Yau-$m$ and
the classical case (corresponding to cluster algebras) is when $m=2$.

Another way to realize cluster categories is via Verdier quotient.
Set $$N=m+1$$ and let
\begin{gather}\label{eq:VV}
    \C(\qq{N})\colon=\per\qq{N}/\DNQ
\end{gather}
be the generalized cluster category associated to $\qq{N}$,
i.e. it sits in the short exact sequence of triangulated categories:
\begin{gather}\label{eq:A}\xymatrix{
    0 \ar[r] & \DNQ \ar[r]& \per\qq{N} \ar[r] & \C(\qq{N}) \ar[r] &0.
}\end{gather}
Note that $\C(\qq{N})$ is Calabi-Yau-$(N-1)$.

\begin{theorem}\cite{A,Guo,Kel1}\label{thm:AGK}
There is a natural triangle equivalence $\C_{m}(Q)\cong\C(\qq{N})$
that identifies the canonical cluster tilting objects in them.
\end{theorem}

Here, a cluster tilting object in a Calabi-Yau-$m$ cluster category
is the direct sum of a maximal collection of non-isomorphic indecomposables $\{M_i\}$
such that $\Ext^k(M_i, M_j)=0$, for all $1\leq k\leq m-1$.
\subsection{$N$-reduction}
In Definition~\ref{def:CYXQ}, replacing $\XX$ with an integer $N\ge2$,
we obtain the usual Ginzburg dg algebra $\qq{N}$
and the corresponding Calabi-Yau-$N$ category $\DNQ$.
On the level of differential (double) graded algebras,
there is a projection
\begin{equation}\label{eq:projection}
    \pi_N\colon\qq{\XX}\to\qq{N}
\end{equation}
collapsing the double degree $(a,b)\in\ZZ\oplus\ZZ\XX$ into $a+bN\in\ZZ$,
that induces a functor
\[
    \pi_N\colon\DXQ\to\DNQ.
\]
Thus, we obtain the following
(cf. \cite[Prop~4.18]{ST}, \cite{K2}, \cite[Theorem~5.1]{KaY}).

\begin{proposition}\label{pp:ST}
$\DXQ \slash [\XX-N]$ is $N$-reductive, where the triangulated structure is provided
by its unique triangulated hull $\DNQ$.
\end{proposition}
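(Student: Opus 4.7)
The plan is to verify both defining conditions of $N$-reductivity directly from the dg-algebra projection $\pi_N\colon\qq{\XX}\to\qq{N}$ introduced in \eqref{eq:projection}. The key observation is that collapsing the bigrading $(a,b\XX)\mapsto a+bN$ is compatible with the differentials: the original arrows $a\in Q_1$ have bidegree $(0,0)$, the opposite arrows $a^*$ have bidegree $(2,-1)$, and the loops $t_i$ have bidegree $(1,-1)$, which under the collapse become $0$, $2-N$, and $1-N$ respectively, matching Definition~\ref{def:CYXQ} for $\qq{N}$. Moreover, $\diff t_i=e_i(\sum aa^*-a^*a)e_i$ lies in bidegree $(1,0)$, hence in degree $1$ after collapsing, so $\pi_N$ is a genuine dg-algebra homomorphism. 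This induces an exact functor $\pi_N\colon\DXQ\to\DNQ$ that manifestly satisfies $\pi_N\circ[\XX]=[N]\circ\pi_N$, and therefore factors through the orbit category $\DXQ\sslash[\XX-N]$.

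For the triangulated hull condition, I would invoke Keller's theory of orbit categories for dg-enhanced triangulated categories, as formulated in \cite[Theorem~5.1]{KaY} (cf.\ \cite{K2} and \cite[Prop.~4.18]{ST}). Since $\DXQ$ carries a canonical dg-enhancement coming from $\qq{\XX}$, and the autoequivalence $[\XX-N]$ is realized by a dg-functor (the combined shift in both gradings), the naive orbit category $\DXQ\sslash[\XX-N]$ embeds fully faithfully into a canonical triangulated category, its triangulated hull. The content of the cited references is that this hull coincides with $\D_{fd}(\qq{N})=\DNQ$ via the functor induced by $\pi_N$, and that the hull is unique up to triangulated equivalence. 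This simultaneously establishes exactness of the quotient functor $\pi_N$ and gives the identification of the hull with $\DNQ$.

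For the Grothendieck group condition, the simple modules $S_1^N,\dots,S_n^N$ attached to the vertices of $Q$ generate $\DNQ$ and are pairwise orthogonal in an appropriate sense, yielding $K(\DNQ)\cong\ZZ^{\oplus n}$ freely. The induced map $[\pi_N]$ sends $[S_i^\XX]\mapsto[S_i^N]$, so surjectivity is immediate. The $R$-linearity together with $q\mapsto(-1)^N$ follows from
\[
[\pi_N](q\cdot[S_i^\XX])=[\pi_N]([S_i^\XX[\XX]])=[S_i^N[N]]=(-1)^N[S_i^N],
\]
which extends uniquely to an $R$-linear surjection as required by the definition.

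The main obstacle is the triangulated-hull claim: orbit categories of triangulated categories are notoriously not triangulated in general, and the passage from the naive orbit to a triangulated completion requires a careful dg-model. The resolution is not to reprove this from scratch but to cite the Keller/Amiot/Kalck-Yang framework, in which the dg-enhancement makes the construction canonical; all the remaining checks (Grothendieck group, the value of $q$, surjectivity) are then bookkeeping around the dg-algebra map $\pi_N$.
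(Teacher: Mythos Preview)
Your approach is essentially what the paper does: the paper presents the proposition as a direct consequence of the cited references \cite{ST}, \cite{K2}, and \cite{KaY} together with the dg-algebra projection $\pi_N$, without writing out a formal proof, and your expansion of the Grothendieck-group check is correct and in line with Lemma~\ref{K-group}.

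One small correction: the element $\diff t_i=e_i(\sum aa^*-a^*a)e_i$ has bidegree $(2,-1)$, not $(1,0)$, since $aa^*$ and $a^*a$ each have bidegree $(0,0)+(2,-1)=(2,-1)$; after collapsing this becomes degree $2-N$, not $1$. What you actually need is that the \emph{differential operator} $\diff$ has bidegree $(1,0)$, so that under the collapse $(a,b)\mapsto a+bN$ it becomes a degree-$1$ operator matching the differential on $\qq{N}$. Your conclusion that $\pi_N$ is a dg-algebra homomorphism is correct, but the sentence justifying it should be rephrased.
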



Thus, in this case we have
$$\DNQ=\DXQ \sslash [\XX-N].$$
Similarly, there is an $N$-reduction
$\pi_N\colon\per\qq{\XX}\to\per\qq{N}$.
A direct corollary is the following.

\begin{corollary}\label{cor:XN}
$\pi_N$ induces an injection $i_N\colon\Aut\DXQ/[\XX-N]\to\Aut\DNQ$.
\end{corollary}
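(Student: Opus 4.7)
The plan is to descend auto-equivalences of $\DXQ$ through the orbit construction and then verify injectivity by examining the action on the simples $S_1,\dots,S_n$ of $\qq{\XX}$.

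First I would construct the group homomorphism $\Aut\DXQ\to\Aut\DNQ$, $\Phi\mapsto\bar\Phi$. By the convention in Section~\ref{sec:QS}, every $\Phi\in\Aut\DXQ$ commutes with $[\XX]$, and hence with $[\XX-N]$. Thus $\Phi$ descends to an exact endofunctor of the orbit category $\DXQ/[\XX-N]$, and by Proposition~\ref{pp:ST} extends uniquely to a triangulated auto-equivalence $\bar\Phi$ of the triangulated hull $\DNQ$. Functoriality of $\Phi\mapsto\bar\Phi$ is immediate. Since $\pi_N\circ[\XX-N]\cong\pi_N$ by construction of the orbit category, the element $[\XX-N]\in\Aut\DXQ$ lies in the kernel; as $\langle[\XX-N]\rangle$ is central in $\Aut\DXQ$ (every auto-equivalence commutes with $[\XX]$), it is normal, and passing to the quotient produces $i_N$.

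For injectivity, I would take $\Phi\in\Aut\DXQ$ with $\bar\Phi\cong\id_{\DNQ}$, which translates to $\pi_N\circ\Phi\cong\pi_N$. Applied to each simple $S_i$ this gives $\pi_N(\Phi S_i)\cong\pi_N S_i$, and the orbit-category description of morphisms implies that two indecomposables in $\DXQ$ have isomorphic images in $\DNQ$ precisely when they differ by a power of $[\XX-N]$. Hence $\Phi(S_i)\cong S_i[k_i(\XX-N)]$ for some $k_i\in\ZZ$. Next I would show that all $k_i$ coincide: reducing to a connected $Q$, for each edge between vertices $i$ and $j$ exactly one of $\Hom_{\DXQ}(S_i,S_j[1])$, $\Hom_{\DXQ}(S_j,S_i[1])$ is non-zero (by acyclicity, $b_{ij}b_{ji}=0$); since the simple-to-simple $\Hom$-table in $\qq{\XX}$ is supported only in degrees $\{0,1,\XX-1,\XX\}$, comparing $\Hom$-spaces before and after $\Phi$ forces $(k_j-k_i)(\XX-N)=0$, hence $k_i=k_j$. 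Writing $k$ for the common value, $\Phi\circ[-k(\XX-N)]$ fixes every simple; since $\DXQ$ is generated by $\{S_i\}$ as a triangulated category stable under $[\XX]$ (Proposition~\ref{prop:X-heart}), this upgrades to a natural isomorphism $\Phi\cong[k(\XX-N)]$, trivial in the quotient.

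The hardest step will be the rigidity argument: promoting the object-wise isomorphisms $\Phi(S_i)\cong S_i[k_i(\XX-N)]$ to equality of the $k_i$, and then to a coherent natural isomorphism of functors. One must be careful with the small-$N$ case ($N=2$), where a priori the shift by $\XX-N$ might conspire with the $\Hom$-degrees $1$ and $\XX-1$; acyclicity of $Q$ rescues the argument because only one direction of $\Hom$ between adjacent simples is non-vanishing. Once rigidity at the simples is established, the functorial coherence follows by standard dévissage along the $\XX$-baric heart from Proposition~\ref{prop:X-heart}.
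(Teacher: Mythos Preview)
The paper offers no proof of this corollary: it is simply stated as a ``direct corollary'' of Proposition~\ref{pp:ST} and the identification $\DNQ=\DXQ\sslash[\XX-N]$. In fact the only place it is invoked (Theorem~\ref{thm:ST}) uses merely the existence of the homomorphism $\Aut\DXQ\to\Aut\DNQ$ together with $\twi_{S_i^\XX}\mapsto\twi_{S_i^N}$, not the injectivity of $i_N$. So your outline is already more detailed than what the paper supplies, and your construction of the map, the kernel containment $\langle[\XX-N]\rangle\subset\ker$, and the argument that all $k_i$ coincide are fine.

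The genuine gap is the final rigidity step. After reducing to $\Phi':=\Phi\circ[-k(\XX-N)]$ with $\Phi'(S_i)\cong S_i$ for each $i$, you assert that this ``upgrades to a natural isomorphism $\Phi'\cong\id$'' by d\'evissage along the $\XX$-baric heart. Fixing a set of generators up to isomorphism is not enough: you must show that the chosen isomorphisms $\alpha_i\colon\Phi'(S_i)\to S_i$ are compatible with all morphisms, i.e.\ that for every nonzero $f\in\Hom_{\DXQ}(S_i,S_j[m+l\XX])$ one has $f\circ\alpha_i=\alpha_j[m+l\XX]\circ\Phi'(f)$. A priori $\Phi'$ acts on each such one-dimensional $\Hom$-space by an arbitrary scalar, and the freedom to rescale the $\alpha_i$ need not absorb all of them when the underlying graph of $Q$ has cycles. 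The missing ingredient is the natural isomorphism $\eta\colon\pi_N\circ\Phi'\Rightarrow\pi_N$ that you already possess: for the relevant bidegrees one checks that only the $k=0$ summand of the orbit $\Hom$ survives, so $\Hom_{\DXQ}(S_i,S_j[m+l\XX])$ injects into the corresponding $\Hom$ in $\DNQ$, and the naturality of $\eta$ then forces exactly the compatibility you need once you lift each $\eta_{S_i}$ to an $\alpha_i$. Spelling this out, rather than appealing to generic d\'evissage, is what would close the argument.
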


For $\DNQ$, we also have the corresponding spherical twist group $\ST_N(Q)$,
generated by spherical twists along the $N$-spherical simple $\qq{N}$-modules.
When $Q$ is of type $A$, \cite{KhS} shows $\iota_Q^\XX$ in \eqref{eq:FF} is an isomorphism.
\cite{ST} proves the corresponding
$$\iota_Q^N\colon\Br_Q \to \ST_N(Q)$$
is also an isomorphism using Proposition~\ref{pp:ST}.
Recall the following result for the Dynkin case and affine type A case.

\begin{theorem}
$\iota_Q^N$ is an isomorphism when
\begin{itemize}
\item $Q$ is a Dynkin quiver and $N\ge2$ (\cite[Thm.~B]{QW}).
\item $Q$ is affine type A and $N=2$ (\cite[Cor.~37]{IUU}) or $3$ (\cite[Thm.~7.3]{QQ}).
\end{itemize}
\end{theorem}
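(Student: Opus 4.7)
The plan is to establish injectivity of $\iota_Q^N$, since surjectivity is immediate from the definition via Proposition~\ref{ST_braid}. The core strategy in each case is to produce a faithful action of $\ST_N(Q)$ on some tractable combinatorial or geometric object, and to check that this action matches a previously known faithful action of $\Br_Q$.

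For the Dynkin case with $N\ge2$ (\cite{QW}), I would work with the exchange graph $\EGC(\DNQ)$ whose vertices are hearts of $\DNQ$ reachable from the canonical heart $\mod \qq{N}$ by iterated simple HRS tilts, and whose edges record those tilts. The spherical twist group $\ST_N(Q)$ acts on $\EGC(\DNQ)$, and the key technical step is to identify the quotient graph $\EGC(\DNQ)/\ST_N(Q)$ with the cluster exchange graph of $Q$, which is known to be simply connected in Dynkin types. Given freeness of the $\ST_N(Q)$-action on vertices (checked via linear-independence of simples in the Grothendieck group), covering-space theory then yields a presentation of $\ST_N(Q)$ generated by loops around codimension-two cells; one verifies that these loops translate exactly to the braid relations in Definition~\ref{Artin}.

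For affine type $\widetilde{A}$ with $N=2$ (\cite{IUU}), the approach uses the McKay correspondence to identify $\D_2(Q)$ with a geometric category associated to the minimal resolution of a Kleinian singularity. The simple $\qq{2}$-modules correspond to structure sheaves of irreducible exceptional $(-2)$-curves, and their spherical twists become geometric twists; faithfulness reduces to classical results on configurations of rational curves in surfaces. For $N=3$ (\cite{QQ}), one invokes the Bridgeland--Smith realization \eqref{eq:BS} to model $\D_3(Q)$ via quadratic differentials on a marked annulus. Simple spherical modules correspond to simple closed curves, their spherical twists to Dehn twists, and the faithfulness of $\iota_Q^3$ follows from the injectivity of the tautological map from the affine braid group into the mapping class group of the annulus.

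The main obstacle throughout is establishing freeness of the relevant action: one must rule out nontrivial elements of $\ST_N(Q)$ that fix the reference heart, curve system, or fundamental domain. In the Dynkin case this is finite combinatorics but requires a careful analysis of the global structure of the cluster exchange graph; in the affine cases one must contend with infinitely many spherical objects and verify that no accidental relations arise from the noncompact geometry. Matching cluster-theoretic/surface relations precisely with the braid relations of $\Br_Q$, rather than just producing some surjection onto a braid-like group, is the step where each of the three cited proofs does the real work.
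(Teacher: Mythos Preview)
The paper does not prove this theorem at all: it is stated as a recall of results already established in the cited references \cite{QW}, \cite{IUU}, and \cite{QQ}, with the citations built directly into the statement. No argument is given or needed here; the theorem is simply imported, and the paper's own contribution is the subsequent Theorem~\ref{thm:ST}, which deduces the Calabi-Yau-$\XX$ statement from this one via Corollary~\ref{cor:XN}.

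Your proposal, by contrast, sketches the actual content of the three cited papers. As summaries those sketches are broadly reasonable---the exchange-graph/covering-space mechanism is indeed the engine of \cite{QW}, and the surface/mapping-class-group picture is the right framework for \cite{QQ}---though some details are off (for instance, \cite{IUU} treats $A_n$-singularities rather than affine type $A$ via McKay, and the \cite{QW} argument hinges on contractibility of a component of $\Stab\DNQ$ rather than directly on simple connectedness of the cluster exchange graph). But none of this belongs in the present paper: the correct ``proof'' here is a one-line pointer to the cited theorems, and your exposition would be misplaced unless you intend to reprove those external results from scratch.
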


Then combining with Corollary~\ref{cor:XN}, we obtain Calabi-Yau-$\XX$ version
as a direct corollary of Corollary~\ref{cor:XN}.
\begin{theorem}\label{thm:QW}
Let $Q$ either be a Dynkin quiver or an affine type A quiver,
Then $\iota_Q^\XX:\Br_Q \to \ST_\XX(Q)$ is an isomorphism for any $N\ge2$.
\end{theorem}
\subsection{Cluster-$\XX$ categories}
As a generalization,
we defined the cluster-$\XX$ category $\C(\qq{\XX})$ as the Verdier quotient $\per\qq{\XX}/\DXQ$.

\begin{theorem}\label{thm:p=c}
The embedding $\k Q\to\qq{\XX}$ induces a triangle equivalence
\[\C(\qq{\XX})\xrightarrow{\cong}\per\k Q\]
that factors through $\per\QQX$.
\end{theorem}
\begin{proof}
First of all, the embedding $\k Q\to\qq{\XX}$ induces the functor
\begin{gather}\label{eq:induce p}
  i_*\colon  \per\k Q\to \per\qq{\XX}
\end{gather}
sending projectives to projectives.
Denote the image of $i_*$ by $\hh{C}$, which is generated by $\{ \qq{\XX}[i]\}_{i\in\ZZ}$.

Recall that $\DXQ$ admits the Serre functor $\XX$ and $\Sim\qq{\XX}$ is the set of simple $\qq{\XX}$-modules.
Denote by $\hh{S}$ the thick subcategory of $\DXQ$ generated by the objects in $\Sim\qq{\XX}[\ZZ].$
Consider the canonical unbounded t-structure
$\DXQ=\<\hh{X}, \hh{Y}\>$,
where $\hh{X}$ is generated by $\hh{S}[\ZZ_{\ge0}\XX]$ and
$\hh{Y}$ is generated by $\hh{S}[\ZZ_{<0}\XX]$.

By the Calabi-Yau-$\XX$ duality in Lemma~\ref{lem:cy},
we obtain
\[
    \Hom(M,\QQX)\cong \Hom(\QQX,M[\XX])^*
\]
for any $M\in\DXQ$.
Noticing the following calculation about $\Hom$ between projectives and simples:
\begin{gather}\label{eq:2}
    \Hom^{\ZZ}_{\per\QQX}(\QQX,\hh{S}[j\XX])=0
\end{gather}
for any $j\neq0$,
we have
\begin{gather}\label{eq:1}
    \Hom_{\per\QQX}(\hh{S}[j\XX],\hh{C})=0
\end{gather}
for any $j\neq-1$ and, moreover,
\[
    \Hom_{\per\QQX}(\hh{S},\QQX[\XX])
    \cong\Hom_{\per\QQX}(\QQX,\hh{S})^*\neq0,
\]
which implies $\Hom_{\per\QQX}(\hh{S},\hh{C}[\XX])\neq0$.
Therefore we deduce that the right perpendicular
\[
    \hh{X}^\perp
        \colon=\{ Z\in\per\QQX \mid \Hom_{\per\QQX} (\hh{X},Z)=0 \}
\]
of $\hh{X}$ in $\per\QQX$ is generated by
$\hh{C}[\ZZ_{\leq0}\XX]$ and, similarly,
the left perpendicular
\[
    ^\perp\hh{Y}\colon=\{ Z\in\per\QQX \mid \Hom_{\per\QQX} (Z,\hh{Y})=0 \}
\]
of $\hh{Y}$ in $\per\QQX$ is generated by $\hh{C}[\ZZ_{\geq0}\XX]$.

Next we claim that
$\<\hh{X},\hh{X}^\perp\>$ and $\<^\perp\hh{Y},\hh{Y}\>$ are torsion pairs in $\per\qq{X}$.
We only need to show that they generate $\per\qq{X}$.
Recall that the notation $\<\hh{A},\hh{B}\>$ consists of object $M$ that
admits a triangle $$A\to M\to B\to A[1]$$
with $A\in\hh{A}$ and $B\in\hh{B}$.

Consider the $\hh{Y}$ case first.
We start to show that $\hh{C}\subset\<^\perp\hh{Y}[\XX], \hh{S}\>$.
Take any projective $P_i\in\hh{C}$ for $i\in Q_0$.
Then there is a triangle (cf. \cite[\S~2.14]{KY})
\begin{gather}\label{eq:PtoS}
    S_i[-1]\to A_1\oplus B_1
        \to P_i \to S_i,
\end{gather}
for
\[A_1=\bigoplus_{ i\xrightarrow{a}j \in Q_1 } a\ P_j
    \quad\text{and}\quad
  B_1=\bigoplus_{ i\xrightarrow{b}k \in Q_1^*\cup Q_0^* } b\ P_k.
\]
The arrows $b\in Q_1^*\cup Q_0^*$ are the new arrows in the double $\overline{Q}$ with $\XX$-degree $-1$
(see Definition~\eqref{def:CYXQ}). Thus, $b P_k$'s are in $^\perp\hh{Y}[\XX]$ and hence $B_1\in{^\perp}\hh{Y}[\XX]$.
As for $a\in Q_1$, their $\XX$-degree is 0. Thus $P_j$'s are in still ${^\perp}\hh{Y}$ and hence $A_1\in{^\perp}\hh{Y}$.
Using \eqref{eq:PtoS} for $P_j\to S_j$ and we will get the following filtration of $P_i$
\[
\xymatrix @C=4mm{
 A_2\oplus B_2 \ar[rr]   &&  A_1\oplus B_1 \ar[dl] \ar[rr] && P_i, \ar[dl]\\
& \displaystyle\bigoplus_{ i\xrightarrow{a}j \in Q_1 } a\ S_j \ar@{-->}[ul] && \quad S_i\quad \ar@{-->}[ul]
}\]
for
$$ A_2= \bigoplus_{ \substack{i\xrightarrow{a}j\in Q_1 \\
        j\xrightarrow{a'}j' \in Q_1}} a'a\ P_{j'}
        \quad\text{and}\quad
        B_2=B_1\oplus \bigoplus_{ \substack{i\xrightarrow{a}j\in Q_1\\
        j\xrightarrow{b'}k' \in Q_1^*\cup Q_0^*}} b'a\ P_{k'} .$$
As before, we have $B_2\in{^\perp}\hh{Y}[\XX]$ and $A_2\in{^\perp}\hh{Y}$.
Again, we will further decompose $P_{j'}$'s using the corresponding \eqref{eq:PtoS}.
Since $\k Q$ is finite dimensional, this process will end up with a filtration
\begin{equation}\label{eq:HNx}
\xymatrix @C=5mm{
 B_{l+1} \ar[rr]   &&  A_{l}\oplus B_{l} \ar[dl] \ar[rr] &&  \ar[dl]
 \ar[r] & \dots  \ar[r] & A_1\oplus B_1 \ar[rr] && P_i \ar[dl] \\
& H_{l+1} \ar@{-->}[ul] && H_l \ar@{-->}[ul] &&&& H_1 \ar@{-->}[ul]
}
\end{equation}
for $H_m$ is a direct sum whose summands are of the form $p S_{t{p}}$,
where $p$ is a non-zero path of length less than $m$ in $Q_1$ with $h(p)=i$, and
$B_m$ is a direct sum whose summands are of the form $b p P_{t(b)}$,
where $p$ is a non-zero path of length less than $m$ in $Q_1$ and $b\in Q_1^*\cup Q_0^*$
with $h(b)=t(p), h(p)=i$.
Here $h(-)$ and $t(-)$ are the head/tail function for arrows/paths.
As above, we have $B_m\in{^\perp}\hh{Y}[\XX]$.
Also, we have $H_m\in\hh{S}$.
Therefore \eqref{eq:HNx} implies that $P_i\in\<{^\perp}\hh{Y}[\XX], \hh{S}\>$ via Octahedron Axiom.
Thus we obtain $\hh{C}\subset{^\perp}\<\hh{Y}[\XX], \hh{S}\>$.
Inductively, with $\hh{C}[-m\XX]\subset\<\hh{Y}[-m\XX+\XX], \hh{S}[-m\XX]\>$ holds for any
positive integer $m$, we deduce that
$\hh{C}[-m\XX]\subset\<{^\perp}\hh{Y}, \hh{Y}\>$ holds for $m>0$ since
$${^\perp}\hh{Y}[-m\XX-\XX]\subset{^\perp}\hh{Y}\quad\text{and}\quad\hh{S}[-m\XX]\subset\hh{Y}.$$
Hence $\per\qq{X}=\<{^\perp}\hh{Y},\hh{Y}\>$ as required.

For the $\hh{X}$ case, we only need to modify \eqref{eq:PtoS} as follows.
Rewrite $$B_1=e_i^*\ P_i \bigoplus_{ i\xrightarrow{b}j \in Q_1^*} b\ P_k=P_i[\XX-1]\oplus \underline{B_1}.$$
and by Octahedron Axiom we have
\[
  \xymatrix@C=3pc{
    & (A_1\oplus\underline{B_1})[-1] \ar[d]\ar@{=}[r] & (A_1\oplus\underline{B_1})[-1] \ar[d]\\
    S_i[-1] \ar@{=}[d] \ar[r] & P_i[\XX-1] \ar[r] \ar[d]
        & M \ar[d] \ar[r] & S_i \ar@{=}[d]\\
    S_i[-1] \ar[r] & A_1\oplus B_1 \ar[d] \ar[r] &
        P_i \ar[d]\ar[r] & S_i\\
    & A_1\oplus\underline{B_1} \ar@{=}[r]& A_1\oplus\underline{B_1}\\
  }
\]
So we can decompose $P_i$ into a filtration with factors $S_i[-\XX]$, $A_1\oplus \underline{B_1}$ and $P_i[1-\XX]$,
similar to the first step of \eqref{eq:HNx}. In the same way, we can prove that
$\<\hh{X},\hh{X}^\perp\>=\per\qq{X}$.

Finally, by \cite[Thm~1.1]{IY}, we obtain the following
equivalence $F$ between additive categories
\begin{gather}\label{eq:IY}
\xymatrix{
    F\colon
    \hh{C}\quad\ar[rr]^{\cong}\ar@{^{(}->}[dr]&&\hh{C}(\QQX)\\
    &\per\QQX\ar@{->>}[ur]^{\pi},
}\end{gather}
for $\hh{C}=\hh{X}{^\perp}\cap\;{^\perp}\hh{Y}[1]$.
What is left to show is that $F\circ i_*$ is an equivalence as triangulated categories.
This follows from the fact that $i_*$ is a functor between triangulated categories
and $F$ preserves shifts and triangles.
\end{proof}

Combining results above, we obtain the following.

\begin{corollary}\label{cor:SES}
We have the following commutative diagram between short exact sequences of
triangulated categories:
\begin{equation}
\xymatrix{
    0 \ar[r] &
        \DXQ \ar[r]\ar[d]^{_{\sslash[\XX-N]}} &
        \per\qq{\XX} \ar[r]\ar[d]^{_{\sslash[\XX-N]}} &
        \per\k Q \ar[r]\ar[d]^{_{/\tau[2-N]}} &0 \\
    0 \ar[r] & \DNQ \ar[r]& \per\qq{N} \ar[r] & \C_{N-1}(Q) \ar[r] &0
}.\end{equation}
\end{corollary}
\begin{proof}
The first two vertical (exact) functors are induced from projection \eqref{eq:projection}
as in Proposition~\ref{pp:ST}
and the left square commutes as the corresponding horizonal functors are just inclusions.
The third vertical functor is Keller's orbit quotient in Definition~\ref{def:cluster}.
The right square commutes since the projectives (generators) of $\per\qq{\XX}$
are mapped to projectives of $\per\k Q$ and $\per\qq{N}$ respectively,
and they further become the canonical $(N-1)$-cluster tilting object in $\C_{N-1}(Q)$.
\end{proof}

\section{Example: Calabi-Yau-$\XX$ $A_2$ quivers}\label{sec:A2}
In this section, we discuss the example of $\OStab\D_\XX(Q)$ for $Q$ is an $A_2$ quiver.
The prototype, i.e. the $N$-fiber
$$\OStab_N\D_\XX(A_2)\cong\Stab\D_N(A_2)$$ for $N=2$,
is calculated in \cite{BQS}.
\begin{figure}[ht]\centering
\begin{tikzpicture}[scale=.35]
\path (0,0) coordinate (O);
\path (4,0) coordinate (A);
\path (3+0.2,10) coordinate (m1);
\path (3+0.2,-10) coordinate (m2);
\draw[fill=gray!14] (m1)
    .. controls +(-90:3) and +(120:1) .. (A)
    .. controls +(-120:1) and +(90:3).. (m2)
   to [out=180,in=0] (-15,-10)
   .. controls +(90:3) and +(-90:1) ..  (-15,10) ;
\draw[white,thick] (m2) edge (-15,-10);
\draw[dotted,->,>=stealth] (-20,0) -- (8,0) node[right]{$x$};
\draw[dotted,->,>=stealth] (0,-10) -- (0,10) node[above]{$y$};
\draw (0,0) node[below left]{$0$};
\path[dotted] (6,-10) edge (6,10);
\path[dotted] (3,-10) edge (3,10);
\path[thick] (-15,0) edge (-15,10);
\path[thick] (-15,-10) edge (-15,0);
\draw[red,thick] (A) edge (6,0);
\draw[red,thick] (m1) .. controls +(-90:3) and +(120:1) .. (A);
\draw[red,thick] (A)  .. controls +(-120:1) and +(90:3).. (m2);
\draw (6,0) node[below right] {$1$};
\path (5.5,0) node (a) {$$};
\path (3.6,1.5) node (b) {$$};
\path (3.6,-1.5) node (c) {$$};
\path[->,>=stealth,orange, bend right] (a) edge (b);
\path[->,>=stealth,orange, bend right] (b) edge (c);
\path[->,>=stealth,orange, bend right] (c) edge (a);
\path (-15,1+4) node (d) {$ $};
\path (-15,-1+4) node (e) {$ $};
\path[orange, bend right=45] (d) edge (-16,0+4);
\path[->,>=stealth,orange, bend right=45] (-16,0+4) edge (e);
\path[orange, bend right=45] (e) edge (-15+1,0+4);
\path[->,>=stealth,orange, bend right=45] (-15+1,0+4) edge (d);
\path (4,6) node {$l_+$};\path (4,-6) node {$l_-$};
\draw[fill=black] (0,0) circle (.05);
\draw[fill=white] (6,0) circle (.1);
\draw[fill=red] (4,0) circle (.2);
\draw[fill=black] (-15,0+4) circle (.2);
%
%
\path (4,0) node[below right] {\tiny{$\frac{2}{3}$}};
\path (-15,0) node{$\cdot$} node[left] {\tiny{$\frac{2-\Re(s)}{2}$}};
\path (-15,0+4) node[right] {{$\frac{2-s}{2}$}};
\end{tikzpicture}
\caption{The region/fundamental domain $R_s$}\label{fig:A2N}
\end{figure}

Let $S_1$ and $S_2$ be the simple $\Gamma_\XX A_2$-module satisfying
\[
    \Hom^{\ZZ^2}(S_1,S_2)=\k[-1],\quad
    \Hom^{\ZZ^2}(S_2,S_1)=\k[-\XX+1]
\]
and $\twi_i$ be the corresponding spherical twists.
The canonical $\XX$-baric heart $\D_\infty(A_2)$ is generated by the shifts of $S_1$ and $S_2$
and has one more indecomposable object $\twi_1(S_2)$ (up to shift).
Moreover, $\D_\infty(A_2)$ admits a (normal) heart generated by $S_1$ and $S_2$.
By Corollary~5.2 and Theorem~5.4, we have the following.
\begin{itemize}
  \item $\ST_\XX A_2=\<\twi_1,\twi_2\>\cong\Br_3$.
  \item The center of $\ST_\XX A_2$ is generated by $(\twi_1\circ\twi_2)^3$.
  \item Let $\tau_\XX=\twi_1\circ\twi_2\circ[\XX-2]$ which satisfies $\tau_\XX^3=[-2]$ and
  \[ \tau_\XX\{S_1,S_2\}=\{S_2,\twi_1(S_2)[-1]\}.\]
  \item Let $\Upsilon_\XX=\twi_1\circ\twi_2\circ\twi_1\circ[2\XX-3]$ which satisfies $\Upsilon_\XX^2=[X-2]$
  and   \[ \Upsilon_\XX\{S_1,S_2\}=\{ S_2,S_1[\XX-2] \}.\]
  \item The auto-equivalence group $\Aut\D_\XX(A_2)$ is generated by $\twi_i,[1],[\XX]$ and
  sits in the short exact sequence
  \[
    1\to\ST_\XX(A_2)\to\Aut\D_\XX(A_2)\to (\ZZ[1]\oplus\ZZ[\XX])/\ZZ[3\XX-4]\to1.
  \]
\end{itemize}

Applying Theorem~\ref{thm:inducing} and the calculation of $\gldim$ in \cite{Q3}, i.e.
\[
    \gldim\Stab\D_\infty(A_2)=[1/3,\infty),
\]
we have the following

\begin{figure}[h]\centering
\begin{tikzpicture}[yscale=.5,xscale=.5,rotate=0]
\foreach \j in {.3,.35,...,4}{
    \draw[dashed,orange!23,fill=orange!11] (0,2^\j) ellipse (3^\j/9-.09 and 3^\j/60);
}
\foreach \j in {4,1}{
    \draw[dashed,fill=orange!7] (0,2^\j) ellipse (3^\j/9-.09 and 3^\j/60);
}
\foreach \j in {1,1.05,...,4}{
    \draw[dashed,orange!34] (0,2^\j) ellipse (3^\j/9-.09 and 3^\j/60);
}
\foreach \j in {1,2,...,4}{
    \draw[orange,dashed,thick] (0,2^\j) ellipse (3^\j/9-.09 and 3^\j/60);
}
\draw[orange](0,2^.3)node{$\bullet$};
\draw[blue,,dotted,thick,fill=cyan!60, opacity=0.2] (0,16) ellipse (11 and 2);
\draw[blue,thick,->,>=stealth] (-3,2)node[below]{$\Re(s)$} to (-3,16.3)node[above]{$\infty$};
\draw[red,font=\scriptsize](0,8)node{$\OStab_s\D_\XX$}(10,4.5)node[]{$\OStab_s\D_\XX$};
\draw[blue,font=\scriptsize](0,16.5)node{$\Stab\D_\infty$}
    (-10,4.5)node{$\Stab\D_\infty$};
\draw(0,0)node[red]{$\OStab\D_\XX$};

\draw[blue, dotted,fill=cyan!60, opacity=0.2] (-10,5) coordinate (P) ellipse (5 and 5);
\foreach \j in {0,10,...,50}{
    \draw[blue,very thin] ($(P)+(210-\j:5)$) to[bend right] ($(P)+(90+\j:5)$);
    \draw[blue,very thin] ($(P)+(90-\j:5)$) to[bend right] ($(P)+(-30+\j:5)$);
    \draw[blue,very thin] ($(P)+(330-\j:5)$) to[bend right] ($(P)+(210+\j:5)$);
}

\draw[orange,font=\scriptsize] (3,2)node[below right]{$\Re(s)\ge2$};
\draw[orange, dashed,fill=orange!50, opacity=0.2] (10,5) coordinate (P) ellipse (5 and 5);
\foreach \j in {120,240,0}{
    \draw[orange] ($(P)+(90+\j:5)$) to[bend left] ($(P)+(210+\j:5)$);
}
\foreach \j in {0,60,...,300}{
    \draw[orange] ($(P)+(90+\j:5)$) to[bend left=60] ($(P)+(150+\j:5)$);
}
\foreach \j in {0,30,...,330}{
    \draw[orange] ($(P)+(90+\j:5)$) to[bend left=75] ($(P)+(120+\j:5)$);
}
\foreach \j in {0,15,...,345}{
    \draw[orange] ($(P)+(90+\j:5)$) to[bend left=90] ($(P)+(105+\j:5)$);
}
\foreach \j in {0}{
    \draw[JungleGreen,thick] ($(P)+(80+\j:5.5)$) edge[bend right,<-,>=stealth] ($(P)+(90+10+\j:5.5)$);}
\foreach \j in {0}{
    \draw[JungleGreen,thick] ($(P)+(80+\j:5.5)+(-20,0)$) edge[bend right,<-,>=stealth] ($(P)+(90+10+\j:5.5)+(-20,0)$);}
\draw[JungleGreen]($(P)+(90:5.5)$)node[above]{$^{\ST_\XX\cong\Br}$}
($(P)+(90:5.5)+(-20,0)$)node[above]{$^{\ZZ_3}$};
\end{tikzpicture}
\caption{The tornado illustration of $\CC\backslash\OStab\D_\XX(A_2)$}\label{fig:tornado}
\end{figure}

\begin{lemma}\cite{BQS,Q3}
\begin{itemize}
\item $\CStab_s\D_\XX(A_2)$ is not empty if and only if $\Re(s)\ge4/3$.
It is connected if and only if $\Re(s)\ge2$.
\item ${\OStab}_s\D_\XX(A_2)$ is not empty if and only if $\Re(s)>4/3$.
It is connected if and only if $\Re(s)>2$ (cf. Figure~\ref{eq:Re(s)<2}).
\item $\CStab_s\D_\XX(A_2)=\OStab_s\D_\XX(A_2)$ if and only $\Re(s)>2$.
\item The fundamental domain for $\CC\backslash\overline{\OStab}_s\D_\XX(A_2)/\Aut$
is $R_s$ in Figure~\ref{fig:A2N},
where the coordinate $z=x+y\mathbf{i}$ satisfies
\begin{gather*}
    e^{\bi \pi z}= Z(S_1) / Z(S_2) ,\\
    \Re(z)=\phi(S_1)-\phi(S_2),
\end{gather*}
and $l_{\pm}$ is given by the equation
$$l_{\pm}=\{ z=x+\mathbf{i} y \mid  x\in(\frac{1}{2},\frac{2}{3}],
        y\pi=\mp\ln(-2\cos x\pi)\}.$$
Moreover, there are two orbitfold points on $\partial R_s$,
one is $(2-s)/2$ with order 2 and the other one is $2/3$ with order 3.
\item The order 3 orbitfold point $\sigma_{G,s}$ solves
 the Gepner equation $\tau_\XX(\sigma)=(-\frac{2}{3}) \cdot \sigma$ (\cite[Thm.~5.10]{Q3}).
\end{itemize}
\end{lemma}

\begin{remark}
In Figure~\ref{fig:tornado} we present
the tornado illustration of $\CC\backslash\OStab\D_\XX(A_2)$.
\end{remark}

\begin{figure}[h]
\begin{tikzpicture}[scale=.9]
\draw[orange, dashed,fill=orange!40, opacity=0.2] (10,5) coordinate (P) ellipse (5 and 5);
\foreach \j in {120,240,0}{
    \draw[orange,fill=white] ($(P)+(90+\j:5)$) to[bend left=32] ($(P)+(210+\j:5)$)
                                to[bend left=-20]  ($(P)+(90+\j:5)$);
}
\foreach \j in {0,60,...,300}{
    \draw[orange,fill=white] ($(P)+(90+\j:5)$) to[bend left=60] ($(P)+(150+\j:5)$)
                                to[bend left=-40] ($(P)+(90+\j:5)$);
}
\foreach \j in {0,30,...,330}{
    \draw[orange,fill=white] ($(P)+(90+\j:5)$) to[bend left=30] ($(P)+(120+\j:5)$)
                                to[bend left=-60] ($(P)+(90+\j:5)$);
}
\end{tikzpicture}
\caption{$\OStab_s \D_\XX(A_2)$ is not connected when $\Re(s)<2$}\label{eq:Re(s)<2}
\end{figure}

\appendix
\section{Categorification of $q$-deformed root lattices}\label{sec:cat}

\subsection{$q$-deformed root lattices}
Recall the notation $R:=\ZZ[q,q^{-1}]$.
Let $Q$ be an acyclic quiver with vertices $\{1,\dots,n\}$
and $b_{ij}$ be
the number of arrows from $i$ to $j$.

We introduce the $q$-deformed Cartan matrix $A_Q(q)=(a(q)_{ij})$ by
\begin{equation*}
a(q)_{ij}:=\delta_{ij}+q \,\delta_{ji}-(b_{ij}+q \,b_{ji} )
\end{equation*}
where $\delta_{ij}$ is the Kronecker delta.
We note that the specialization $a(1)_{ij}$ at $q=1$ gives the usual generalize Cartan matrix
associated with the underlying Dynkin diagram of $Q$.
The matrix $A_Q(q)$ satisfies the skew symmetric conditions
\begin{equation*}
A_Q(q)^{T}=q\, A_Q(q^{-1}).
\end{equation*}

\begin{definition}
Let $L_Q$ be a free abelian group of rank $n$ with generators
$\alp_1,\dots,\alp_n$ which correspond to vertices $1,\dots,n$
of $Q$:
\begin{equation*}
L_Q:=\bigoplus_{i=1}^n \ZZ \alp_i.
\end{equation*}
We set $L_{Q,R}:=L_Q \otimes_{\ZZ} R$
and define the $q$-deformed bilinear form
\begin{equation*}
(\quad,\quad)_q \colon L_{Q,R} \times L_{Q,R}
\to R
\end{equation*}
by $(\alp_i,\alp_j)_q :=a_{ij}(q)$.
We call $(L_{Q,R},(\,,\,)_q)$ the {\it $q$-deformed root lattice}.
\end{definition}

Corresponding to \emph{simple roots} $\alp_1,\dots,\alp_n$,
we define $R$-linear maps
$r_1^q,\dots,r_n^q:L_{Q,R} \to
L_{Q,R}$ by
\begin{equation*}
r_i^q(\alp):=\alp-(\alp,\alp_i)_q \,\alp_i.
\end{equation*}
Then we can check that
\[
(r_i^q)^{-1}(\alp):=\alp-(\alp_i,\alp)_{q^{-1}} \,\alp_i
\]
by the skew symmetry $q(\alp_i,\alp)_{q^{-1}}=(\alp,\alp_i)$.
The relations of $r_1^q,\dots,r_n^q$ will be described in the next section.
Here we consider the special case $q=1$ and write $r_i:=r_i^{q=1}$.
The group
\[
W_Q:=\left<r_1,\dots,r_n\right>
\]
generated by simple reflections $r_1,\dots,r_n$
is called the {\it Weyl group} and satisfies the relations
\begin{align*}
r_i r_i&=1 \\
r_i r_j &=r_j r_i           \quad \quad \text{if}  \quad  a(1)_{ij}= 0 \\
r_i r_j r_i&=r_j r_i r_j   \,\quad \text{if}  \quad  a(1)_{ij} = -1.
\end{align*}
Note that these relations give the description of $W_Q$ as the Coxeter group.
\subsection{Artin groups and Hecke algebras}
\label{sec:Artin}
In this section,
we define the Artin group associated to an acyclic quiver $Q$ and
discuss the representation of it through Hecke algebras.
\begin{definition}[\cite{BSai}]
\label{Artin}
The {\it Artin group $\Br_Q$} is the group generated
by $\bb_1,\bb_2,\dots,\bb_n$ and relations
\begin{align*}
\bb_i \bb_j &=\bb_j \bb_i
\quad \quad \text{if}  \quad  a(1)_{ij}= 0 \\
\bb_i \bb_j \bb_i&=\bb_j \bb_i \bb_j
\quad \text{if}  \quad  a(1)_{ij} = -1.
\end{align*}
\end{definition}

By using reflections $r_1^q,\dots,r_n^q$, we can construct
the representation of $\Br_Q$ on $L_{Q,R}$.

\begin{lemma}
\label{rep}
The correspondence of generators
\begin{equation*}
\Br_Q \to \GL(L_{Q,R}),\quad \bb_i^{-1} \mapsto r_i^q
\end{equation*}
gives the representation of $\Br_Q$ on $L_{Q,R}$.
In other words, reflections $r_1^q,\dots,r_n^q$ satisfy
the relations in Definition \ref{Artin}.
\end{lemma}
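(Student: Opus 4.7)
The claim is that the assignment $\bb_i^{-1}\mapsto r_i^q$ extends to a group homomorphism $\Br_Q\to\GL(L_{Q,R})$. Since every defining relation of $\Br_Q$ is symmetric under $\bb_i\leftrightarrow\bb_i^{-1}$, it is equivalent to verify
\[
    r_i^q r_j^q=r_j^q r_i^q \quad\text{when } a(1)_{ij}=0, \qquad
    r_i^q r_j^q r_i^q=r_j^q r_i^q r_j^q \quad\text{when } a(1)_{ij}=-1.
\]
Because $L_{Q,R}$ is free over $R$ on the simple roots $\alp_1,\dots,\alp_n$, each identity need only be checked on these basis elements. The computation uses the values $(\alp_i,\alp_i)_q=1+q$ and $(\alp_i,\alp_j)_q=-(b_{ij}+q\,b_{ji})$ coming directly from the definition of $a(q)_{ij}$.

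\textbf{Commutation case.} If $a(1)_{ij}=0$ then $b_{ij}+b_{ji}=0$, and non-negativity forces $b_{ij}=b_{ji}=0$; hence $(\alp_i,\alp_j)_q=(\alp_j,\alp_i)_q=0$, so $r_i^q$ fixes $\alp_j$ and $r_j^q$ fixes $\alp_i$. A direct two-line expansion of each composition then yields $r_i^q r_j^q(\alp)=r_j^q r_i^q(\alp)$ on every basis vector, and hence on all of $L_{Q,R}$.

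\textbf{Braid case.} If $a(1)_{ij}=-1$, acyclicity of $Q$ forces exactly one of $b_{ij},b_{ji}$ to equal $1$; by the symmetry of the target relation in $i,j$ I may assume $b_{ij}=1$, $b_{ji}=0$, giving $(\alp_i,\alp_j)_q=-1$ and $(\alp_j,\alp_i)_q=-q$. I would split the verification into two sub-checks. First, restrict to the rank-two $R$-submodule $V_{ij}=R\alp_i\oplus R\alp_j$; in the basis $(\alp_i,\alp_j)$ the operators have explicit matrices
\[
    r_i^q|_{V_{ij}}=\begin{pmatrix} -q & q \\ 0 & 1 \end{pmatrix},\qquad
    r_j^q|_{V_{ij}}=\begin{pmatrix} 1 & 0 \\ 1 & -q \end{pmatrix},
\]
and direct matrix multiplication shows that both triple products equal $\begin{pmatrix} 0 & -q^2 \\ -q & 0 \end{pmatrix}$. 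Second, for $\alp_k$ with $k\neq i,j$, set $c_i:=(\alp_k,\alp_i)_q$ and $c_j:=(\alp_k,\alp_j)_q$, regarded as arbitrary elements of $R$, and unravel each composition step by step using the formulae above; both sides collapse to $\alp_k-(c_i+qc_j)\alp_i-(c_i+c_j)\alp_j$.

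The only real subtlety lies in the asymmetric appearance of $q$ in $(\alp_j,\alp_i)_q=-q$ versus $(\alp_i,\alp_j)_q=-1$: tracked carefully through the three reflections applied to $\alp_k$, this single factor of $q$ is exactly what is needed to make the two triple compositions coincide, reflecting the skew-symmetry $A_Q(q)^T=q\,A_Q(q^{-1})$ built into the $q$-deformed Cartan matrix. Everything else is routine bookkeeping with the defining formula for $r_i^q$.
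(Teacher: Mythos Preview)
Your proposal is correct and follows essentially the same direct-computation approach as the paper. The only organisational difference is that the paper treats all basis vectors $\alp_k$ at once via a single closed formula
\[
r_i^q r_j^q r_i^q(\alp_k)=\alp_k+(a(q)_{ki}a(q)_{ij}-a(q)_{kj})\alp_j+(a(q)_{kj}a(q)_{ji}-a(q)_{ki})\alp_i,
\]
then observes this expression is invariant under swapping $i$ and $j$ (using $a(q)_{ij}a(q)_{ji}=q$), whereas you split into the rank-two block $V_{ij}$ and the remaining $\alp_k$'s; your two computations specialize exactly to the paper's formula and its symmetry argument.
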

\begin{proof}
First we show the relation $r_i^q r_j^q r_i^q =r_j^q r_i^qr_j^q$
if $a_{ij}=-1$ for $i \neq j$.
Recall the definition of $a(q)_{ij}$:
\begin{equation*}
a(q)_{ij}=\delta_{ij}+q \,\delta_{ji}-(b_{ij}+q \,b_{ji} ).
\end{equation*}
Since the condition $a(1)_{ij}=-1$ implies $b_{ij}=1$ and $b_{ji}=0$ or vice versa,
we have $a(q)_{ij}=-1$
or $a(q)_{ij}=-q$. In particular,
we have $a(q)_{ij}a(q)_{ji}=q$.
By using this equality, we have the following:
\begin{align*}
r_i^q r_j^q r_i^q(\alp_k)=\alp_k&+\big(a(q)_{ki}a(q)_{ij}-a(q)_{kj}\big)\alp_j \\
&+\big(a(q)_{kj}a(q)_{ji}-a(q)_{ki}\big)\alp_i.
\end{align*}
As the right hand side is an invariant when exchanging $i$ and
$j$, we have $r_i^q r_j^q r_i^q =r_j^q r_i^qr_j^q$.

Similar we have $r_i^q r_j^q=r_j^q r_i^q$
if $a_{ij}=0$.
\end{proof}

If $Q$ is an $A_n$ quiver, the above representation is
known as the reduced Burau representation.

\begin{definition}
The {\it Hecke algebra $H_Q$} is an $R$-algebra generated by
$T_1.\dots,T_n$ with relations
\begin{align*}
(T_i-q)(T_i+1)&=0 \\
T_i T_j &=T_j T_i           \quad \quad \text{if}  \quad  a_{ij}(1)= 0 \\
T_i T_j T_i&=T_j T_i T_j   \quad \text{if}  \quad  a_{ij}(1) = -1.
\end{align*}
\end{definition}

\begin{proposition}
Let $R[\Br_Q]$ be the group ring of $\Br_Q$ over $R$. Then the
representation
\begin{equation*}
R[\Br_Q] \to R[\GL(L_{Q,R})],\quad \bb_i \mapsto r_i^q
\end{equation*}
in Lemma \ref{rep} factors the Hecke algebra $H_Q$:
\begin{equation*}
\xymatrix{
R[\Br_Q] \ar@{|->}[rr] \ar[dr]&&
R[\GL(L_{Q,R})]  & \bb_i^{-1} \ar@{|->}[rr] \ar@{|->}[dr]&&
r_i^q \\
&H_Q \ar[ur] &&& -T_i \ar@{|->}[ur]&.
}
\end{equation*}
\end{proposition}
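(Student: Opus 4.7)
The plan is to verify the Hecke quadratic relation for each reflection $r_i^q$, then assemble the two factorization maps from universal properties.

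First I would compute the minimal polynomial of $r_i^q$ on $L_{Q,R}$. Since $Q$ is acyclic we have $b_{ii}=0$, so $(\alp_i,\alp_i)_q = 1+q$, and therefore
\[
    r_i^q(\alp_i) \;=\; \alp_i - (1+q)\alp_i \;=\; -q\,\alp_i,
\]
while $r_i^q$ acts as the identity on the hyperplane $\{\alp \in L_{Q,R} \mid (\alp,\alp_i)_q = 0\}$, which together with $R\,\alp_i$ spans $L_{Q,R}$. Hence the eigenvalues of $r_i^q$ are $-q$ and $1$, and its minimal polynomial divides $(x+q)(x-1)$. Equivalently, $-r_i^q$ is annihilated by $(y-q)(y+1)$, which is precisely the Hecke quadratic relation satisfied by $T_i$.

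Next I would construct the factorization. Define an $R$-algebra homomorphism $\rho \colon H_Q \to R[\GL(L_{Q,R})]$ by $T_i \mapsto -r_i^q$. The quadratic relation is satisfied by the previous step; the braid relations $T_iT_jT_i = T_jT_iT_j$ (when $a(1)_{ij} = -1$) and commutation $T_iT_j = T_jT_i$ (when $a(1)_{ij} = 0$) transfer to the corresponding identities for $-r_i^q$---the sign changes cancel on each side of the braid identity, and trivially in commutation---so these reduce to the relations for $r_i^q$ already proved in Lemma~\ref{rep}. Thus $\rho$ is well defined. Then I would define $\iota \colon R[\Br_Q] \to H_Q$ by $\bb_i \mapsto -T_i^{-1}$; this is well defined because $T_i$ is invertible in $H_Q$ (a consequence of the quadratic relation, which gives $T_i^{-1} = q^{-1}(T_i-(q-1))$), and the elements $-T_i^{-1}$ inherit the braid and commutation relations from the $T_i$ by inverting both sides.

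Finally, the composition $\rho \circ \iota$ sends $\bb_i^{-1} \mapsto -T_i \mapsto r_i^q$, matching the representation of Lemma~\ref{rep} and yielding the claimed factorization through $H_Q$. The entire argument reduces to the single eigenvalue computation in the first paragraph; the remaining verifications are bookkeeping via universal properties, so there is no serious obstacle.
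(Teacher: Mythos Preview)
Your eigenvalue argument has a genuine gap: the claimed spanning fails over $R=\ZZ[q,q^{-1}]$. Concretely, for any $\alp$ you would need $\alp=\beta+c\,\alp_i$ with $(\beta,\alp_i)_q=0$, forcing $c(1+q)=(\alp,\alp_i)_q$; but $1+q$ is not a unit in $R$, so such $c$ need not exist. For instance, in the $A_2$ quiver $1\to 2$ one computes $(\alp_2,\alp_1)_q=-q$, and $-q$ is not divisible by $1+q$ in $R$, so $\alp_2$ does not lie in $R\alp_1+\{\alp\mid(\alp,\alp_1)_q=0\}$. Hence you cannot conclude directly that the minimal polynomial divides $(x+q)(x-1)$ from a two-eigenspace decomposition.

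There are two easy repairs. One is to note that the desired identity $(-r_i^q-q)(-r_i^q+1)=0$ is a polynomial identity among the matrix entries of $r_i^q$ with coefficients in $R$; since $L_{Q,R}$ is free over the integral domain $R$, it suffices to check it after base change to the fraction field $\QQ(q)$, where $1+q$ is invertible and your eigenspace argument goes through verbatim. The other is simply to expand $(-r_i^q-q)(-r_i^q+1)(\alp_k)$ directly using $r_i^q(\alp_k)=\alp_k-a(q)_{ki}\alp_i$ and $r_i^q(\alp_i)=-q\,\alp_i$; this is the route the paper takes, and it is a three-line computation. The remainder of your proposal (defining $\rho$ and $\iota$, transferring the braid relations through the sign) is fine.
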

\begin{proof}
We need to show that
\begin{equation*}
(-r_i^q-q)(-r_i^q+1)=0.
\end{equation*}
We note that $r_i^q(\alp_i)=-q$.
Then we have the following calculation:
\begin{align*}
(-r_i^q-q)(-r_i^q+1)(\alp_k)
&=(r_i^q)^2(\alp_k)+(q-1)r_i^q(\alp_k) -q \alp_k \\
&=r_i^q(\alp_k-a(q)_{ki}\alp_i)+(q-1)(\alp_k-a(q)_{ki}\alp_i)-q \alp_k \\
&=\alp_k -a(q)_{ki}\alp_i +q a(q)_{ki}\alp_i
+(q-1)(\alp_k-a(q)_{ki}\alp_i)-q \alp_k \\
&=0.
\end{align*}
\end{proof}

\subsection{Grothendieck groups and $q$-deformed root lattices}
\label{sec:K-group}
In this section, we realize $q$-deformed root lattices as the Grothendieck groups
of derived categories of Calabi-Yau-$\XX$ completions of acyclic quivers.
Recall that $Q$ is an acyclic quiver,
$\bK Q$ be the path algebra of $Q$ and $\Pi_{\XX}(\bK Q)$ its Calabi-Yau-$\XX$ completion.
The Grothendieck $K(\D_{\XX}(Q))$ carries the $R=\ZZ[q^{\pm1}]$-module structure as in \eqref{eq:R-structure}.
Let $S_1,\dots,S_n \in \D_{\XX}(Q)$ be simple modules
of $\Pi_{\XX}(\bK Q)$ corresponding to vertices $\{1,\dots,n\}$ of $Q$.
So we have the following.
\begin{lemma}
\label{K-group}
The Grothendieck group  $K(\D_{\XX}(Q))$ admits a basis $\{[S_i]\}_{i=1}^n$:
\begin{equation*}
K(\D_{\XX}(Q)) \cong \bigoplus_{i=1}^n R[S_i],
\quad [S_i[m+n \XX]]\mapsto (-1)^m q^n\,[S_i].
\end{equation*}
\end{lemma}

Next define the {\it Euler form}
\begin{equation*}
K(\D_{\XX}(Q)) \times K(\D_{\XX}(Q))
\to R
\end{equation*}
by
\begin{equation*}
    \chi(E,F)(q)
    :=\sum_{m,l \in \ZZ}(-1)^m q^l \dim_{\bK } \Hom(E,F[m+l \XX]).
\end{equation*}
Thus we obtain the pair $(K(\D_{\XX}(Q)),\chi)$.
This gives the categorification of  the
corresponding $q$-deformed root lattice as follows.
\begin{proposition}
\label{iso_KL}
The pair $(K(\D_{\XX}(Q)),\chi)$ is isomorphic
to the $q$-deformed root lattice $(L_{Q,R},(\,,\,)_q)$
through the map
\begin{equation*}
    K(\D_{\XX}(Q)) \iso L_{Q,R},\quad [S_i] \mapsto \alp_i.
\end{equation*}
\end{proposition}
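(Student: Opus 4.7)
The plan is to verify two things: (i) the prescribed map is an $R$-linear isomorphism of modules, and (ii) it carries the Euler form $\chi$ to the $q$-deformed bilinear form $(\,,\,)_q$. Part (i) is immediate from Lemma~\ref{K-group}, since both $K(\D_{\XX}(Q))$ and $L_{Q,R}$ are free $R$-modules of rank $n$ with the bases $\{[S_i]\}$ and $\{\alpha_i\}$ respectively, and the assignment matches generators to generators. So the content lies entirely in (ii), namely the identity
\begin{equation*}
    \chi(S_i,S_j)(q)=a(q)_{ij}=\delta_{ij}+q\,\delta_{ji}-(b_{ij}+q\,b_{ji}).
\end{equation*}

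To compute the left-hand side, I would use the Lagrangian immersion $\hh{L}_Q\colon \D(Q)\to \D_\XX(Q)$ from Corollary~\ref{cor:Lag}, under which the simple $\Pi_\XX(\bK Q)$-module $S_i$ is identified with the simple $\bK Q$-module at vertex $i$ (still denoted $S_i$). Formula \eqref{eq:HOM} then gives
\begin{equation*}
    \RHom_{\D_\XX(Q)}(S_i,S_j)\cong \RHom_{\bK Q}(S_i,S_j)\oplus D\,\RHom_{\bK Q}(S_j,S_i)[-\XX].
\end{equation*}
Since $\bK Q$ is hereditary with $\dim_\bK\Hom_{\bK Q}(S_i,S_j)=\delta_{ij}$ and $\dim_\bK\Ext^1_{\bK Q}(S_i,S_j)=b_{ij}$, and all higher $\Ext$-groups vanish, the four possibly nonzero values of $\dim_\bK\Hom_{\D_\XX(Q)}(S_i,S_j[k])$ occur at $k=0,1,\XX-1,\XX$, with dimensions $\delta_{ij}$, $b_{ij}$, $b_{ji}$, $\delta_{ij}$ respectively.

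Plugging these into $\chi(S_i,S_j)(q)=\sum_{m,l\in\ZZ}(-1)^m q^l \dim_\bK\Hom(S_i,S_j[m+l\XX])$ only the four pairs $(m,l)=(0,0),(1,0),(-1,1),(0,1)$ contribute, giving
\begin{equation*}
    \chi(S_i,S_j)(q)=\delta_{ij}-b_{ij}-q\,b_{ji}+q\,\delta_{ij},
\end{equation*}
which is precisely $a(q)_{ij}$ since $\delta_{ij}=\delta_{ji}$. This matches the required form and proves (ii), and thus the proposition.

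There is no substantive obstacle; the only thing to be careful about is bookkeeping the degree shifts $[m+l\XX]$ correctly when converting from the cohomological $\Ext$-indexing on the hereditary side to the $R$-indexing via $q^l\cdot[E]=[E[l\XX]]$, and in particular checking that no further nonzero Homs appear in other bidegrees—both consequences of the hereditariness of $\bK Q$ combined with \eqref{eq:HOM}.
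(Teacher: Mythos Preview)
Your proof is correct and follows the same approach as the paper. Both arguments invoke the Lagrangian immersion formula \eqref{eq:HOM} from Corollary~\ref{cor:Lag} to reduce the computation of $\chi(S_i,S_j)$ to the hereditary Euler form on $\D(Q)$; the paper packages this as the identity $\chi(\hh{L}_Q(E),\hh{L}_Q(F))=\chi_0(E,F)+q\,\chi_0(F,E)$ and then substitutes $\chi_0(S_i,S_j)=\delta_{ij}-b_{ij}$, whereas you enumerate the four nonzero bidegrees directly, but the content is identical.
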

\begin{proof}
By Lemma \ref{K-group}, the isomorphism of abelian groups
$K(\D_{Q, \XX}) \iso L_{Q,R}$
is clear. The remaining part is to show that
$\chi(S_i,S_j)=(\alp_i,\alp_j)_q$.
Denote by $\chi_0$ the Euler form on $\DQ=\D^b(\k Q)$.
Corollary~\ref{cor:Lag} implies that
\begin{equation*}
\chi(\hh{L}_Q(E), \hh{L}_Q(F))=\chi_0(E,F)+q \,\chi_0(F,E),
\end{equation*}
where $\hh{L}_Q$ is the Lagrangian immersion and
$E,F \in \DQ$.
For the path algebra $\bK Q$, we can compute
\begin{align*}
&\dim \Hom_{\DQ)}(S_i,S_j)=\delta_{ij}  \\
&\dim \Hom_{\DQ}(S_i,S_j[1])=\dim \Ext^1(S_i,S_j)=b_{ij}.
\end{align*}
Thus we have
$\chi_0(S_i,S_j)=\delta_{ij}-b_{ij}$, and
\begin{align*}
\chi(S_i,S_j)&=\chi_0(S_i,S_j)+q\,\chi_0(S_j,S_i) \\
&=\delta_{ij}-b_{ij}+q(\delta_{ji}-b_{ji})
=(\alp_i,\alp_j)_q.
\end{align*}
\end{proof}

\subsection{Spherical twists}
\label{sec:spherical}
Finally, we categorify the action of Hecke algebras defined in Section \ref{sec:Artin}
 through the Seidel-Thomas spherical twists.
An object $S \in \D_{\XX}(Q)$ is called {\it $\XX$-spherical} if
\begin{align*}
\Hom(S,S[i]) =
\begin{cases}
\bK  \quad \text{if} \quad i=0,\XX  \\
\,0 \quad \text{otherwise} .
\end{cases}
\end{align*}

Set
\begin{equation*}
    \Hom^{\ZZ^2}(S,E) \otimes S
    :=\bigoplus_{m,l \in \ZZ}\Hom(S[m+l\XX],E) \otimes S[m+l\XX].
\end{equation*}

\begin{proposition}[\cite{ST}, Proposition 2.10]
For a spherical object $S \in \D_{\XX}(Q)$, there is an
exact auto-equivalence $\twi_S \in \Aut\D_{\XX}(Q)$
defined by the exact triangle
\begin{equation*}
\Hom^{\ZZ^2}(S,E) \otimes S \lto E \lto \twi_S(E)
\end{equation*}
for any object $E \in \D_{\XX}(Q)$.
The inverse functor $\twi_S^{-1} \in \Aut\DXQ$ is given by
\begin{equation*}
\twi_S^{-1}(E) \lto E \lto S \otimes \Hom^{\ZZ^2}(E,S)^\vee .
\end{equation*}
\end{proposition}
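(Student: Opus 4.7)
My plan is to follow the Seidel--Thomas strategy from \cite{ST}, adapted to our $\ZZ\oplus\ZZ\XX$-graded setting. Since $\DXQ=\D_{fd}(\qq{\XX})$ is the finite-dimensional derived category of the homologically smooth ddg-algebra $\qq{\XX}$ (Corollary~\ref{cor:Lag}), one constructs $\twi_S$ as a Fourier--Mukai type functor via a bimodule kernel, and then verifies invertibility using the $\XX$-spherical hypothesis together with the Calabi-Yau-$\XX$ duality \eqref{eq:X}.

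First I would build the functor. Since $S\in\DXQ$ admits a compact dg-model in $\Per\qq{\XX}$, there is a natural evaluation morphism of bimodules
\[
    \mathrm{ev}\colon S\otimes_\k S^\vee\lto\qq{\XX}
\]
in the derived category of $\qq{\XX}$-bimodules. Set $K_S:=\Cone(\mathrm{ev})$ and define $\twi_S(E):=E\otimes^L_{\qq{\XX}} K_S$. Tensoring the defining triangle for $K_S$ with $E$ produces exactly the triangle in the statement, after identifying $E\otimes^L_{\qq{\XX}}(S\otimes_\k S^\vee)\simeq \Hom^{\ZZ^2}(S,E)\otimes S$. Exactness is automatic for a derived tensor functor, and $\twi_S$ preserves $\D_{fd}$ because $K_S$ is perfect as a bimodule, which uses homological smoothness of $\qq{\XX}$. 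The candidate inverse $\twi_S^{-1}$ is defined analogously from the dual kernel $K_S^\vee[-1]$, yielding the second triangle in the statement.

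Next I would verify invertibility, which is the main content. It suffices to show that $K_S\otimes^L_{\qq{\XX}} K_S^\vee[-1]\simeq \qq{\XX}$ as bimodules, and symmetrically on the other side. Expanding the two cones produces a $3\times 3$ octahedral diagram whose key entry is $S^\vee\otimes^L_{\qq{\XX}} S\simeq \Hom^{\ZZ^2}(S,S)$. The $\XX$-spherical condition says that $\Hom^{\ZZ^2}(S,S)$ is concentrated in $(\ZZ\oplus\ZZ\XX)$-bidegrees $0$ and $\XX$, each one-dimensional, and Calabi-Yau-$\XX$ Serre duality \eqref{eq:X} ensures the two copies of $\k$ pair nondegenerately. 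An octahedral chase, identical in form to \cite[Prop.~2.10]{ST}, then collapses the iterated cone onto the diagonal bimodule $\qq{\XX}$, yielding natural isomorphisms $\twi_S\circ\twi_S^{-1}\simeq\id\simeq\twi_S^{-1}\circ\twi_S$.

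The main obstacle is the careful bookkeeping of the $\ZZ\oplus\ZZ\XX$-bigrading: each shift $[m+l\XX]$ must be tracked in both directions, and one must verify that $\mathrm{ev}$ is a closed bidegree-zero bimodule morphism whose cone remains perfect in $\Per(\qq{\XX}\otimes\qq{\XX}^{\mathrm{op}})$. The sphericality hypothesis, phrased in the bigraded form $\Hom(S,S[i+j\XX])=\k$ only for $(i,j)\in\{(0,0),(0,1)\}$, is exactly calibrated so that $K_S$ and $K_S^\vee[-1]$ become convolution-inverse kernels after these identifications. Once this bigraded verification is in place, the remainder of the argument is strictly parallel to the Calabi-Yau-$N$ situation in \cite{ST}.
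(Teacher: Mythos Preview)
The paper does not give its own proof of this proposition: it is stated as a citation of \cite[Proposition~2.10]{ST}, with no argument supplied in the text. Your proposal is therefore not competing against any proof in the paper, but rather reconstructing the cited Seidel--Thomas argument in the ddg (bigraded) setting, which is exactly the intended content of the citation. Your sketch---building $\twi_S$ from the cone of the evaluation bimodule map $S\otimes_\k S^\vee\to\qq{\XX}$, and then checking that the $\XX$-spherical condition forces the convolution $K_S\otimes^L K_S^\vee[-1]$ to collapse to the diagonal---is the correct outline, and the only genuine addition over \cite{ST} is the bookkeeping of the second grading, as you note. One small point worth making explicit is why $S\in\DXQ$ is automatically perfect over $\qq{\XX}$: this follows from homological smoothness (Lemma~\ref{lem:Serre} and the surrounding discussion), and is needed to ensure that $K_S$ is a perfect bimodule so that $\twi_S$ preserves $\D_{fd}$.
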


Let $S_i=S_i^\XX$ be simple $\qq{\XX}$-modules
corresponding to vertices $\{1,\dots,n\}$ of $Q$.
It is easy to check that
$S_1,\dots,S_n$ are $\XX$-spherical objects.
Thus we can define spherical twists
$\twi_{S_1},\dots,\twi_{S_n} \in \Aut\DXQ$.

The Seidel-Thomas' spherical twist group is defined
to be the subgroup of $\Aut\DXQ$ generated by
spherical twists $\twi_{S_1},\dots,\twi_{S_n}$, i.e.
\begin{equation*}
\ST_\XX(Q) := \left< \twi_{S_1},\dots,\twi_{S_n} \right>.
\end{equation*}

\begin{proposition}[\cite{ST}, Theorem 1.2]
\label{ST_braid}
For the group $\ST_\XX(Q)$, the following relations hold :
\begin{align*}
\twi_{S_i}\twi_{S_j} &= \twi_{S_j}\twi_{S_i} \quad \quad \quad \text{if}  \quad  \chi(S_i,S_j)(1) = 0 \\
\twi_{S_i}\twi_{S_j}\twi_{S_i} &= \twi_{S_j}\twi_{S_i}\twi_{S_j} \,\,\quad \text{if}  \quad  \chi(S_i,S_j)(1) = -1.
\end{align*}
\end{proposition}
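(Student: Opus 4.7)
The plan is to reduce both relations to the classical Seidel--Thomas criteria for spherical twists: if $A, B$ are spherical with the total graded $\RHom(A, B) = 0$, then $\twi_A \twi_B = \twi_B \twi_A$; and if the total graded $\RHom(A, B)$ is concentrated in a single bidegree and one-dimensional there, then $\twi_A \twi_B \twi_A = \twi_B \twi_A \twi_B$. The content to check is that the Euler-form hypothesis translates, via the Lagrangian formula \eqref{eq:HOM}, into exactly these two $\Hom$-dimension configurations inside $\DXQ$.

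First I would translate the Euler-form assumption into vanishing data. For $i \neq j$, the proof of Proposition~\ref{iso_KL} gives $\chi(S_i, S_j)(q) = -b_{ij} - q\,b_{ji}$, so $\chi(S_i, S_j)(1) = -(b_{ij} + b_{ji})$. Thus the commuting hypothesis forces $b_{ij} = b_{ji} = 0$, while the braid hypothesis forces exactly one arrow between $i$ and $j$; by symmetry assume $b_{ij} = 1$, $b_{ji} = 0$. Since $\bK Q$ is hereditary and $S_i, S_j$ are simples at distinct vertices, $\RHom_{\DQ}(S_i, S_j)$ and $\RHom_{\DQ}(S_j, S_i)$ both vanish in the commuting case, while in the braid case they reduce to $\bK[-1]$ and $0$ respectively.

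Next, apply the Lagrangian identity
\[
\RHom_{\DXQ}(\hh{L}_Q(S_i), \hh{L}_Q(S_j)) \cong \RHom_{\DQ}(S_i, S_j) \oplus D\RHom_{\DQ}(S_j, S_i)[-\XX]
\]
from Corollary~\ref{cor:Lag}. In the commuting case every summand vanishes, so the full $\ZZ \oplus \ZZ\XX$-graded $\Hom$ between $S_i$ and $S_j$ inside $\DXQ$ is zero; hence $\twi_{S_i}$ acts as the identity on $S_j$ and vice versa, and a direct diagram chase from the defining triangle of a spherical twist yields $\twi_{S_i}\twi_{S_j} = \twi_{S_j}\twi_{S_i}$. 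In the braid case the formula gives the single class $\RHom_{\DXQ}(S_i, S_j) \cong \bK[-1]$, and the Calabi-Yau-$\XX$ duality \eqref{eq:X} then produces the matching class $\RHom_{\DXQ}(S_j, S_i) \cong \bK[1-\XX]$. This is precisely the $A_2$-configuration of $\XX$-spherical objects governed by a single linking class.

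The main step is then to invoke the Seidel--Thomas braid verification in this graded setting: one exhibits a functorial isomorphism of the two triple compositions by iterating the defining triangles and using the one-dimensional $\Hom$ class to identify $\twi_{S_i}(S_j) \cong \twi_{S_j}^{-1}(S_i)$, after which the two functors agree on a spanning set and a standard induction on the cone filtration extends the isomorphism to all of $\DXQ$. The only obstacle beyond the ungraded argument of \cite{ST} is careful bookkeeping of the extra $\XX$-shift; however, since $\DXQ$ is dg-enhanced and $\XX$ is an honest auto-equivalence that commutes with every $\twi_{S_k}$, the proof of \cite[Theorem 1.2]{ST} applies verbatim with $[\XX]$ treated as an additional grading shift.
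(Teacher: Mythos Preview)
Your proposal is correct and aligns with the paper's treatment: the paper gives no proof at all, simply citing \cite[Theorem~1.2]{ST}, and your sketch spells out exactly why that citation suffices---translating the Euler-form hypothesis into the required $\Hom$-configuration via the Lagrangian formula~\eqref{eq:HOM}, then observing that the Seidel--Thomas argument carries over with $[\XX]$ as an additional grading shift. Your write-up is thus a faithful expansion of what the paper leaves implicit.
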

Proposition \ref{ST_braid} implies that there is a surjective
group homomorphism
\begin{equation}\label{eq:FF}
    \iota_Q^\XX\colon\Br_Q \to \ST_\XX(Q), \quad \bb_i \mapsto \twi_{S_i}.
\end{equation}

On the Grothendieck  group $K(\D_{\XX}(Q))$, the spherical
twist $\twi_{S_i}$ induces a reflection
$[\twi_{S_i}] \colon K(\D_{\XX}(Q)) \to K(\D_{\XX}(Q))$
given by
\begin{equation*}
[\twi_{S_i}]([E]) = [E] - \chi(S_i,E)(q^{-1})[S_i].
\end{equation*}
Then the inverse of $[\twi_{S_i}]$ is
\[
[\twi_{S_i}]^{-1}([E]) = [E] - \chi(E,S_i)(q)[S_i].
\]
Recall from Proposition \ref{iso_KL} that
$(K(\D_{\XX}(Q)),\chi) \cong (L_{Q,R},(\,,\,)_q)$.
Under this isomorphism, we can identify
the reflection $[\twi_{S_i}]$ on $K(\D_{\XX}(Q))$
with the reflection $r_i^q$ on $L_{Q,R}$ for $i=1,\dots,n$.

\begin{proposition}\label{pp:Hecke}
Through the isomorphism $K(\D_{\XX}(Q)) \cong L_{Q,R}$, the
action of $[\twi_{S_i}]^{-1}$ coincides with the action of $r_i^q$ on
$L_{Q,R}$. In particular, the representation of $\Br_Q$ on
$K(\D_{\XX}(Q))$ defined by $\bb_i^{-1} \mapsto [\twi_{S_i}]^{-1}$ is
equivalent to the representation given in Lemma \ref{rep}.
\end{proposition}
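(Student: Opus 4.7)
The plan is to exploit that both $[\twi_{S_i}]^{-1}$ and $r_i^q$ are $R$-linear endomorphisms of the free $R$-module $K(\DXQ)\cong L_{Q,R}$, so the identity reduces to a check on the $R$-generators $[S_j]\leftrightarrow\alp_j$. For $r_i^q$ the $R$-linearity is part of its definition (via the $R$-linearity of $(-,-)_q$ in its first slot); for $[\twi_{S_i}]^{-1}$ it follows from the fact that any triangulated auto-equivalence commutes with $[\XX]$, so $\twi_{S_i}^{-1}(E[\XX])\cong\twi_{S_i}^{-1}(E)[\XX]$, which translates to $[\twi_{S_i}]^{-1}(q\cdot[E])=q\cdot[\twi_{S_i}]^{-1}([E])$ on $K$-theory.

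Next I would plug a generator into the formula
\[
[\twi_{S_i}]^{-1}([S_j]) = [S_j] - \chi(S_j, S_i)(q)\,[S_i]
\]
recalled just before the proposition. By Proposition~\ref{iso_KL} (with $i$ and $j$ interchanged) one has $\chi(S_j, S_i)(q) = (\alp_j, \alp_i)_q$, so under $[S_k]\mapsto \alp_k$ the image is exactly $\alp_j - (\alp_j, \alp_i)_q\,\alp_i = r_i^q(\alp_j)$. Since the $[S_j]$ generate $K(\DXQ)$ over $R$, the equality $[\twi_{S_i}]^{-1} = r_i^q$ extends by $R$-linearity to all of $L_{Q,R}$. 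For the representation statement, composing with the surjection $\iota_Q^\XX\colon\Br_Q\to\ST_\XX(Q)$ of \eqref{eq:FF} gives $\bb_i\mapsto[\twi_{S_i}]$, and taking inverses yields $\bb_i^{-1}\mapsto[\twi_{S_i}]^{-1}=r_i^q$, matching the representation of Lemma~\ref{rep}.

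The main (mild) point of care is the variance of $\chi$: the stated formula for $[\twi_{S_i}]^{-1}$ places the input $E$ in the first slot of $\chi$, which is precisely the slot in which $(-,-)_q$ is $R$-linear and which is used to build $r_i^q$. Beyond this bookkeeping there is no real obstacle, since everything reduces to Proposition~\ref{iso_KL} plus the observation that the $\twi_{S_i}$ are $R$-linear on $K$-theory because they are triangle functors commuting with the $\XX$-shift.
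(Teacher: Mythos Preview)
Your proposal is correct and is exactly the argument the paper has in mind: the paper states Proposition~\ref{pp:Hecke} without proof, as it follows immediately from the formula $[\twi_{S_i}]^{-1}([E]) = [E] - \chi(E,S_i)(q)[S_i]$ displayed just before it together with Proposition~\ref{iso_KL} and the definition of $r_i^q$. Your write-up simply spells out this comparison on the $R$-generators $[S_j]\leftrightarrow\alp_j$, which is all that is needed.
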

For the $A_n$ quivers, this construction is given
by Khovanov-Seidel (see \cite[Prop.~2.8]{KhS})
as the categorification of  Burau representations.

\begin{remark}
If we consider the usual Calabi-Yau-$N$ completion $\Pi_N(\bK Q)$ for
$N \in \ZZ_{\ge 2}$, then we obtain all results in Section \ref{sec:K-group}
 and Section \ref{sec:spherical}
with the specialization $q=(-1)^N$.
\end{remark}


\end{document}